\documentclass{article}

\usepackage{enumerate}
\usepackage{amsthm}
\usepackage{mathrsfs}
\usepackage{amsmath}
\usepackage{commath}
\usepackage{amsfonts}
\usepackage{color}
\usepackage[hmargin=1.25in,vmargin=1in]{geometry}
\usepackage{tikz}
\usepackage{pgfplots}
\pgfplotsset{compat=1.18} 
\usepackage[backend=bibtex, style=alphabetic]{biblatex}

\newtheorem{thm}{Theorem}
\newtheorem{dfn}[thm]{Definition}
\newtheorem{prop}[thm]{Proposition}
\newtheorem{cor}[thm]{Corollary}
\newtheorem{rem}[thm]{Remark}
\newtheorem{lem}[thm]{Lemma}
\newtheorem{expl}[thm]{Example}

\newtheorem{clm}[thm]{Claim}
\newtheorem{cons}[thm]{Construction}

\numberwithin{thm}{section}
	
\newcommand{\pf}{\begin{proof}}
\newcommand{\epf}{\end{proof}}
\newcommand\be{\begin{equation}}
\newcommand\ee{\end{equation}}
\newcommand\bnu{\begin{enumerate}}
\newcommand\enu{\end{enumerate}}

\newcommand\Nb{\mathbb{N}}

\newcommand\Rb{\mathbb{R}}
\newcommand\Sb{\mathbb{S}}

\newcommand\Hc{\mathcal{H}}
\newcommand{\Lc}{\mathcal{L}}

\newcommand{\id}{\mathrm{id}}

\newcommand{\diam}[1]{\operatorname{diam}(#1)}

\newcommand\dvert[2]{\left.\frac{\drm}{\drm #1}\right\rvert_{#1 = #2}}
\newcommand\drm{\mathrm{\,d}}

\newcommand{\argmin}[1]{\operatorname*{argmin}\limits _{#1}}

\title{Reifenberg Theorem for Locally Finitely Almost Splitting Sets}
\author{Jiaqi Zang}

\addbibresource{reference.bib}

\begin{document}

\maketitle

\begin{abstract}
    The well-known Reifenberg theorem states that if a subset of $\Rb^n$ can be well approximated by $k$-planes at every point and every scale, then it is biH\"older homeomorphic to a $k$-disk. 
    This article concerns a subset $S$ of $\Rb^n$ which can be approximated by at most $N$ parallel $k$ planes at each point and scale.  As a subset of $\Rb^n$ such an $S$ may be quite degenerate; $S$ may clearly not be homeomorphic to a disk, and indeed we will see may not be homeomorphic to a union of disks.  However, we prove that $S$ is still the image of a {\it multivalued} map on $\Rb^k$, which is itself a biH\"older homeomorphism of the disk into the set of subsets of $\Rb^n$.
\end{abstract}

\tableofcontents

\section{Introduction}

The Reifenberg theory concerns a set $S$ (or a measure $\mu$) that bears a local rigidity structure. To be precise, it assumes that $S$ is locally approximated well by some well-behaved object at each point and at every scale in some sense. Depending on the setting, for each neighborhood, the local approximator can be a plane\cite{Rei60}, a plane with holes\cite{DT09}, a measure concentrated on a plane\cite{NV17}, the zero of a harmonic map \cite{BET17} or other objects. The local approximation can happen in the Hausdorff sense\cite{Rei60}, Gromov Hausdorff sense\cite{CC97}, cumulative $L^2$ sense\cite{NV17} or other reasonable relatively weak senses. The conclusions are that $S$ will have a nice global structure that resembles the local approximators. In philosophy, the local ridigity can pass to a global topological or geometric structure.

Our result in this article is a generalization of the classical Reifenberg theorem. As the starting point of this article, we should first introduce the classical Reifenberg theorem. In the classical Reifenberg, a subset $S\subset \Rb^n$ is assumed to be Hausdorff close to a $k$ affine plane at all points and for all scales. In this case, Reifenberg proves that $S$ is a biH\"older image of a ball in $\Rb^k$.

Recall the Hausdorff distance in $\Rb^n$, which is an $L^\infty$ distance for subsets.

\begin{dfn}

Let $A,B\subset \Rb^n$. Define their Hausdorff distance to be

\begin{equation}
d_H(A,B):=\inf\{s>0:A\subset B_s(B),B\subset B_s(A)\}.
\end{equation}
    
\end{dfn}

We also use a local version of Hausdorff distance in this article.
\begin{dfn}[local Hausdorff distance]

For $A,B\subset\Rb^n$, $B_r(x)\subset \Rb^n$. \begin{equation}d_H|_{B_r(x)}(A,B):=\inf\{s>0:A\cap B_r(x)\subset B_s(B),B\cap B_r(x)\subset B_s(A)\}.\end{equation}
    
\end{dfn}

A set $S\subset \Rb^n$ satisfies the classical Reifenberg condition if it is locally Hausdorff close to a $k$-plane.

\begin{dfn}[Reifenberg condition]

Let $S\subset B_2(0^n)$ be a closed subset. For $k \le n, \delta >0$, we say $S$ satisfies the $(k,\delta)$-Reifenberg condition if for each $B_r(x)\subset B_2(0^n)$, there is a $k$-affine plane $l_{x,r}\subset \Rb^n$ with
\begin{equation}d_H|_{B_r(x)}(S,l_{x,r})\le \delta r.\end{equation}
    
\end{dfn}

Now let's state the classical Reifenberg theorem.

\begin{thm}[Classical Reifenberg Theorem]

Assume $S\subset B_2(0^n)$ satisfies the $(k,\delta)$-Reifenberg condition. Then for each $\alpha>0$, there exists $C(n)$ and $\delta(n,\alpha)>0$, such that if $\delta<\delta(n,\alpha)$, we have a $C^{1-\alpha}$-biH\"older $\phi: B_{1}(0^k)\to S$, i,e. for any $|x-y|\le 1$,
\begin{equation}
(1-C(n)\delta))|x-y|^{\frac{1}{1-\alpha}}\le|\phi(x)-\phi(y)|\le (1+C(n)\delta)|x-y|^{1-\alpha}.
\end{equation}
Moreover, $S\cap B_1(0^n)\subset \phi(B_{1}(0^k))$.
\end{thm}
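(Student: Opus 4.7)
The plan is to construct $\phi$ as the uniform limit of approximating maps $\phi_i : B_1(0^k) \to \Rb^n$ built at dyadic scales $r_i = 2^{-i}$, following Reifenberg's original scheme. First I would fix at each scale $r_i$ a maximal $r_i/10$-separated net $\{x_{i,\alpha}\}_\alpha \subset S \cap B_{3/2}(0^n)$ and collect the approximating planes $l_{i,\alpha} := l_{x_{i,\alpha}, r_i}$ supplied by the Reifenberg condition. The geometric backbone of the argument is the angle lemma: whenever two such balls overlap substantially and the scales differ by at most one, $\angle(l_{i,\alpha}, l_{j,\beta}) \leq C(n)\delta$. This follows because both planes lie $\delta r_i$-close to the same portion of $S$ in the overlap region, and $S$ itself is $\delta r_i$-dense in each plane (by applying the Reifenberg condition at the plane's own centers at smaller scale); two planes sharing a $\delta r_i$-net must be essentially parallel.

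Next I would build the maps $\phi_i$ inductively. Take $\phi_0$ to be an affine isomorphism $B_1(0^k) \to l_{0,1}$, and define $\phi_{i+1} = \sigma_{i+1} \circ \phi_i$, where $\sigma_{i+1}: \Rb^n \to \Rb^n$ is the smooth deformation $\sigma_{i+1}(y) = y + \sum_\alpha \psi_{i+1,\alpha}(y)\bigl(\pi_{l_{i+1,\alpha}}(y) - y\bigr)$, with $\{\psi_{i+1,\alpha}\}$ a partition of unity subordinate to $\{B_{2 r_{i+1}}(x_{i+1,\alpha})\}$ and $\pi$ orthogonal projection. The angle lemma forces $\|\sigma_{i+1} - \id\|_\infty \leq C(n)\delta r_{i+1}$, and, crucially, forces the differential of $\sigma_{i+1}$ along the tangent space of the current image $M_i := \phi_i(B_1(0^k))$ to differ from the identity by only $O(\delta^2)$ rather than $O(\delta)$, since projection onto a plane at angle $\lesssim \delta$ from $T_pM_i$ contracts tangential lengths by $1 - O(\delta^2)$.

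The biHölder estimates then follow by telescoping across scales. For the upper bound, at separation $|x - y| \sim r_i$ I would sum the per-scale displacements and the per-scale stretch factors to obtain $|\phi(x) - \phi(y)| \leq (1 + C(n)\delta)|x-y|^{1-\alpha}$, where the Hölder loss $\alpha$ is bought by taking $\delta$ small enough that the product of per-scale stretches contributes at most $2^\alpha$ per halving of scale; this is precisely the dependence $\delta = \delta(n,\alpha)$. The main obstacle is the lower bound: a naive estimate yields only $(1 - C\delta)$ contraction per scale, which accumulates to zero over infinitely many scales. One must instead use the sharper $O(\delta^2)$ distortion estimate above, so that the infinite product converges to $1 - C(n)\delta$, giving the reversed Hölder bound with exponent $1/(1-\alpha)$. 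Passing to the uniform limit produces $\phi$; the containment $S \cap B_1(0^n) \subset \phi(B_1(0^k))$ follows from uniform Hausdorff convergence $M_i \to S$ in $B_{3/2}(0^n)$ together with compactness of $B_1(0^k)$ and continuity of $\phi$.
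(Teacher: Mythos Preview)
The paper does not prove this theorem; it is stated in the introduction as a classical result and serves only as motivation for the paper's generalization, so there is no paper-proof to compare against directly. That said, the paper's proof of its main theorem (Theorem~\ref{main theorem}) specializes to the case $N=1$ and hence yields a proof of the classical statement, and its strategy is genuinely different from yours: rather than building maps $\phi_i:B_1(0^k)\to\Rb^n$ pushing the disk into the ambient space, the paper builds maps $\Phi_i:\Rb^n\to\Rb^k$ going the other way, sets $\iota(c)=\Phi^{-1}(c)\cap S$, and derives the biH\"older estimates from regularity of $\Phi_i$ and Hausdorff-distance bounds between level sets. Your scheme is the classical Reifenberg construction and is perfectly reasonable for the single-sheet statement.

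However, your argument for the lower bound contains a genuine error. You write that the naive $(1-C\delta)$ per-scale contraction ``accumulates to zero over infinitely many scales'' and that the cure is an $O(\delta^2)$ tangential-distortion estimate making ``the infinite product converge to $1-C(n)\delta$''. Neither claim is right. The proposed cure fails: $\prod_{j=1}^\infty(1-C\delta^2)=0$ just as surely as $\prod_{j=1}^\infty(1-C\delta)=0$, since $\delta$ is a fixed constant and the sum $\sum_j C\delta^2$ diverges. And the cure is unnecessary, because only finitely many scales act multiplicatively. For $x,y$ with $|x-y|\sim r_i$, the maps $\sigma_j$ with $j\le i$ contribute a multiplicative factor $(1\pm C\delta)^i$, while for $j>i$ one uses the coarser bound $\|\sigma_j-\id\|\le C\delta r_j$ to get an \emph{additive} error summing to $C\delta r_i$. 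The factor $(1-C\delta)^i = r_i^{C'\delta}$ is exactly what produces the H\"older exponent $\frac{1}{1-\alpha}$ once $\delta\le\delta(n,\alpha)$; the additive geometric tail is what produces the prefactor $1-C(n)\delta$. This split between multiplicative distortion above scale $|x-y|$ and summable additive drift below it is the actual mechanism, and it is also how the paper's argument (Lemma~\ref{level set of Phi is biHolder} and Proposition~\ref{distance between level sets of Phi}) runs.
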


In this note, we'll generalize the theorem to the multi-sheeted case. In this case, $S$ is locally Hausdorff close to multiple parallel $k$-planes, or a $k$-splitting set, instead of one $k$-plane in the classical case.

\begin{dfn}

A closed set $A\subset \Rb^n$ is said to be a $k$-splitting if there exists a $k$-dimensional linear subspace $\hat{l}$ with $\hat{l}+A=A$. $\hat{l}$ is called a splitting direction of $A$.

\end{dfn}

\begin{dfn}[$(k,\delta)$-splitting]

$S\subset \Rb^n$ is said to be a $(k,\delta)$-splitting in $B_r(x)\subset \Rb^n$ if there exists a $k$-splitting set $A$ s.t. 
\begin{equation}d_H|_{B_r(x)}(S,A)\le \delta r.
\end{equation}

\end{dfn}

In this article, we are not considering the most general case where the sheets could be arbitrarily dense. Indeed, if the splitting set is too dense then the splitting direction may not be well defined and other problems in the estimates may emerge as well. Instead, we only consider the case that the splitting set is always a finite collection of planes.

\begin{dfn}[splitting Reifenberg condition]

For a closed set $S\subset B_2(0^n)$, we say $S$ satisfies the $(k,\delta,N)$-splitting Reifenberg condition if the following holds.
\bnu

\item $d_H|_{B_2(0^n)}(S,\Rb^k\times\{0^{n-k}\})\le 2\delta.$
\item For each $B_r(x)\subset B_2(0^n)$, $S$ is a $(k,\delta)$-splitting with splitting set $A_{x,r}$, such that $|A_{x,r}^\perp|\le N$. Here $\hat{l}_{x,r}$ is the splitting direction and $A_{x,r}=\hat{l}_{x,r}+A_{x,r}^\perp$.

\enu
\end{dfn}
\begin{rem}
    Condition 1. is meant to avoid boundary issues, one can avoid them in other ways as well. See Remark \ref{main theorem general version}.
\end{rem}

It is clear that the $(k, \delta)$-Reifenberg condition is the same as $(k,\delta, 1)$-splitting Reifenberg condition. Thus the splitting Reifenberg condition is a generalization of the Reifenberg condition.

For each point and each scale, $S$ still has a well-defined splitting direction, but it is missing a unique approximating plane. Considering the classical Reifenberg theorem, one might guess that $S$ may be the union of a collection of H\"older manifolds, but $S$ can have a more subtle structure, as we will show by examples.

Recall that we may equip the space of subsets of $\Rb^n$ with Hausdorff distance.

\begin{dfn}

For $S\subset \Rb^n$, let $2^S$ be the space of subsets of $S$ and $2^S_N$ be the space of subsets of $S$ that contains at most $N$ points.
    
\end{dfn}

\begin{rem}
    We may equip both $2^S$ and $2^S_N$ with the Hausdorff distance $d_H$.
\end{rem}

\begin{expl}[merging lines]\label{merging lines}
\begin{figure}
    \centering
    \includegraphics[width=0.7\linewidth]{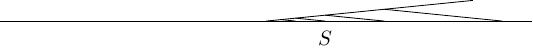}
    \caption{Example \ref{merging lines}}
    \label{fig1}
\end{figure}
(Figure \ref{fig1}) Let $f:\Rb\to 2^{\Rb}_3$, 
\begin{equation}
    f(x)=
\begin{cases}
    \{0\}, & x\le 0\\
    \{0,\delta(2^{i+1}-x),\delta x\},& 2^{i}<x\le 2^{i+1}\\
\end{cases}
\end{equation}
Take $S=Graph(f)\cap B_2(0^2)$. 
\end{expl}
\begin{proof}[Explanation]
In the above example, $f$ is $\delta$-Lipschitz as a map $\Rb\to 2^{\Rb}_3$. We have $d_H|_{B_2(0^n)}(S,\Rb\times\{0^{}\})\le 2\delta.$ For any $x=(x_1,x_2)$, $B_r(x)\subset B_2(0^2)$, we have that $S\cap B_r(x)$ is a $(1,\delta
)$-splitting with splitting set $A_{x,r}:= \Rb\times f(x_1)$. Thus $S$ satisfies the $(1,\delta,3)$-Reifenberg condition.

One should notice that $S$ cannot be written as a finite union of lines, since one line can only cover one segment from the upper line to the lower line. And there is no natural way to define the multiplicity on $S$.
    
\end{proof}

\begin{expl}[the twist]\label{the twist}
\begin{figure}
    \centering
    \includegraphics[]{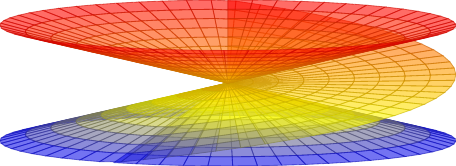}
    \caption{$S$ in Example \ref{the twist}} 
    \label{fig2}
\end{figure}
    (Figure \ref{fig2}) 
    For $\alpha\in(\pi,2\pi]$, define $f_\alpha:\Rb^2\to 2^{\Rb}_3$ as follows: for $x=(r\cos \theta, r\sin \theta)$, $\theta\in[0,2\pi)$, 
    \begin{equation}
        f(x)=\begin{cases}
            \{-r\delta,r\delta(\frac{2\theta}{\alpha}-1), r\delta\}&0\le\theta<\alpha,\\
            \{-r\delta,r\delta\}&\alpha\le\theta<2\pi.
        \end{cases}
    \end{equation}
    Take $S=Graph(f)\cap B_2(0^3)\subset \Rb^3$. 
\end{expl}

\pf[Explanation:] In the above example, $f$ is $4\delta$-Lipschitz as a map $\Rb^2\to (2^{\Rb}_3,d_H)$. We have $d_H|_{B_2(0^3)}(S,\Rb^2\times \{0\})\le 8\delta$. Take $B_r(x)\subset B_2(0^3)$, with $x=(x_1,x_2)\in \Rb^3$, $x_1\in \Rb^2$. Let $A_{x,r}=\Rb^2\times f(x_1)$. Then $d_H|_{B_{r_i}(x)}(S,A_{x,r})\le 4\delta r_i$. Thus $S$ satisfies the $(2,4\delta,3)$-splitting Reifenberg condition.

$S$ is the finite union of some disks, which wraps around the origin for $2+\alpha$ circles. In this case, one may possibly say the 'multiplicity' of $S$ at the origin is $2+\alpha$.
\epf

\begin{expl}\label{uncountable sheets}

Let $C\subset \Rb$ be the thin Cantor set defined as follows. Let $C_0 = [0,\delta]$, $C_2=[0,\delta^2]\cup[\delta-\delta^2,\delta
]$. For each $i$, $C_{i+1}$ is the left and right $\delta$ portion of each segment in $C_i$. Let $C:=\bigcap_iC_i$. Take $S=(C\times \Rb)\cap B_2(0^2)$.
    
\end{expl}

\pf[Explanation]

For each $x\in C$ and $r>0$, assume $\delta^{i-1}/4\ge r>\delta^{i}/4$. The interval $(x-r,x+r)$ intersects with at most one segment in $C_i$, and thus with at most two segments in $C_{i+1}$. $C\cap (x-r,x+r)$ can be covered by at most two intervals of length $\delta^{i+1}$, with $\frac{\delta^{i+1}/2}{r}<2\delta$. Therefore, for any $B_r(x)\subset B_2(0^2)$, $S$ is $(1,2\delta)$-splitting with a splitting set consisting of at most two lines, i.e. $S$ is a $(1,2\delta,2)$-splitting Reifenberg set.

$S$ is the union of lines. However, it is clear that the number of sheets of $S$ has the same cardinality as $\Rb$. Moreover, $S\cap \{0\}\times \Rb$ has a positive Hausdorff dimension.
\epf
From the above examples, we see that $S$ should not be seen as a union of sheets in general. Instead, we should understand $S$ as the image of some multivalued map. The main result of this article is the following theorem. It claims that $S$ can be viewed as a $k$-dimensional manifold $(1-\alpha)$-biH\"olderly embedded into the space $(2^{\Rb^n},d_H)$.

\begin{thm}\label{main theorem}

Given $\alpha>0$, $n, N\in \Nb_+$, assume $\delta<\delta(n,N,\alpha)$ is sufficiently small. Let $S\subset B_2(0^n)$ be a $(k,\delta,N)$-Reifenberg set. Then there exists a $C^{1-\alpha}$-biH\"older map $\iota:B_{1}(0^k)\to (2^{S},d_H)$, i.e., for each $c,d\in B_{1}(0^k)$ with $|c-d|\le 1$, we have $\iota(c),\iota(d)\subset S$, and
\begin{equation}
    (1-C(n,N)\delta)|c-d|^{\frac{1}{1-\alpha}}\le d_H(\iota(c),\iota(d))\le (1+C(n,N)\delta)|c-d|^{1-\alpha},
\end{equation}
such that 
\begin{equation}\label{S intersect B1 is covered}
S\cap B_1(0^n)\subset \iota(B_1(0^k)).
\end{equation}

\end{thm}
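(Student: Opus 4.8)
The plan is to follow the classical Reifenberg construction (the Reifenberg ``sheet'' construction via successive affine re-centerings), but carry it out with \emph{multivalued} data, so that at scale $2^{-i}$ one has a map $\sigma_i : B_1(0^k) \to 2^S_{N}$ rather than a single-valued map. First I would fix a dyadic sequence of scales $r_i = 2^{-i}$ and, for each $i$, a maximal $r_i$-separated net $\{x_{i,j}\}$ in (a neighborhood of) $\iota(B_1)$; at each net point, the splitting Reifenberg condition supplies a $k$-splitting set $A_{x_{i,j}, 10 r_i}$ whose transverse part $A^\perp$ has at most $N$ points, i.e.\ locally $S$ looks like at most $N$ parallel $k$-planes. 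The first real step is to set up the bookkeeping for how the ``at most $N$ sheets'' at scale $i$ relate to the ``at most $N$ sheets'' at scale $i+1$: I would prove a \emph{compatibility / angle estimate} saying that if $B_{r_{i+1}}(x) \subset B_{r_i}(y)$, then the splitting directions $\hat l_{x, r_{i+1}}$ and $\hat l_{y, r_i}$ differ by $\le C(n,N)\delta$ in the Grassmannian, and that each plane of the coarse approximator $A_{y,r_i}^\perp$, restricted to $B_{r_{i+1}}(x)$, is within $C\delta r_i$ of the union of planes of the fine approximator (a one-sided, not two-sided, containment — this is exactly the point where ``merging'' as in Example~\ref{merging lines} is allowed, and why one cannot hope for a genuine disjoint-sheet decomposition). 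This is a direct adaptation of the classical ``the optimal plane rotates slowly'' lemma, done componentwise on the $\le N$ transverse points, using the triangle inequality for $d_H$ and that both approximators are $\delta r$-close to $S$ on the overlap.

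Next I would build the map $\iota$ as a uniform limit, exactly as in the classical case but set-valued. Choose a partition of unity $\{\psi_{i,j}\}$ subordinate to $\{B_{r_i}(x_{i,j})\}$. In the classical proof one defines $\sigma_{i+1}(c) = \sigma_i(c) + \sum_j \psi_{i,j}(\sigma_i(c)) \big( \pi_{i,j}(\sigma_i(c)) - \sigma_i(c)\big)$, a small correction pushing the current sheet toward the local plane. In the multivalued setting I would instead define, for a finite subset $T = \sigma_i(c) \in 2^S_N$, a new finite subset $\sigma_{i+1}(c)$ by: for each point $p \in T$, project $p$ onto the \emph{nearest} plane among the $\le N$ planes of the relevant fine approximators (weighted by the partition of unity), and take the union over $p \in T$ of these projections — so the cardinality can only go \emph{up}, and one caps it at $N$ using the fact (from the one-sided containment above) that the images cluster into at most $N$ groups of diameter $\le C\delta r_i$, which we collapse to representative points. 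One must track that $d_H(\sigma_i(c), \sigma_{i+1}(c)) \le C\delta r_i$ and $d_H(\sigma_i(c), \sigma_i(d)) \le C(|c-d| + r_i)$; summing the geometric series gives that $\sigma_i \to \iota$ uniformly in $(2^S, d_H)$, which is complete, and the biHölder bounds $(1 - C\delta)|c-d|^{1/(1-\alpha)} \le d_H(\iota(c),\iota(d)) \le (1+C\delta)|c-d|^{1-\alpha}$ come out of the standard Reifenberg telescoping: choose $i$ with $r_i \approx |c-d|$, use the Lipschitz-in-$c$ bound below scale $i$ and the sheet-separation (lower) bound above scale $i$. The surjectivity statement $S \cap B_1 \subset \iota(B_1(0^k))$ follows as classically: any $q \in S \cap B_1$ is, at scale $r_i$, within $C\delta r_i$ of some plane of some net approximator, hence within $C\delta r_i$ of $\sigma_i(c_i)$ for a suitable $c_i$, and a compactness/limiting argument produces $c$ with $q \in \iota(c)$.

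The main obstacle — and the place that genuinely departs from the classical argument — is controlling the \emph{cardinality and the merging behavior} of the sets $\sigma_i(c)$ while keeping the map Hölder continuous at the merge loci. When two sheets of $S$ come together (as in Example~\ref{merging lines}) the naive projection produces two nearby points that we want to identify, but the decision ``are these one point or two'' has to be made consistently across nearby $c$'s and across scales, or else $\iota$ fails to be continuous (a jump from $2$ points to $1$ point is a $d_H$-jump of order the sheet separation). I would handle this by never decreasing cardinality abruptly: instead of deciding equality, I keep all $\le N$ points but allow distinct labels to carry points that are $d_H$-close, and only at the very end pass to the image set (which automatically merges coincident points continuously, since $d_H$ of a set does not see multiplicity). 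Concretely, I would first construct an auxiliary map $\tilde\iota : B_1(0^k) \to (\Rb^n)^N / \Sigma_N$ (unordered $N$-tuples, with the $\ell^\infty$-type metric) that \emph{is} genuinely biHölder by the telescoping argument — here the key new estimate is that the $\le N$ transverse points of consecutive approximators can be \emph{matched up} (a bipartite matching whose edges have length $\le C\delta r_i$), which holds once $\delta$ is small enough that the $N$ sheets are either $\ge 10 r_i$-separated (distinct, matched by nearness) or $\le C\delta r_i$-close (treated as the same) with nothing in between at scale $r_i$ — a dichotomy provable by induction on scale from the splitting condition. Then $\iota$ is the composition of $\tilde\iota$ with the forgetful map (unordered tuple $\mapsto$ its underlying set), which is $1$-Lipschitz $((\Rb^n)^N/\Sigma_N, d_\infty) \to (2^S, d_H)$, so $\iota$ inherits the upper Hölder bound; the lower bound needs a short extra argument that the merging only happens ``from one side'' so that $d_H(\iota(c),\iota(d))$ still controls $|c-d|$ — this is where Condition~1 of the splitting Reifenberg definition (global closeness to a single $k$-plane) is used, guaranteeing a ``leading sheet'' that never merges away and on which the classical lower bound applies verbatim.
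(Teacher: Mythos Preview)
Your approach has a fundamental gap: you assume throughout that $\iota(c)$ and the approximants $\sigma_i(c)$ live in $2^S_N$, and you build the auxiliary map $\tilde\iota$ into unordered $N$-tuples $(\Rb^n)^N/\Sigma_N$. But the $N$ in the $(k,\delta,N)$-splitting Reifenberg condition bounds the number of \emph{approximating planes at each fixed scale}, not the number of points in a transverse slice of $S$. Example~\ref{uncountable sheets} is a $(1,2\delta,2)$-splitting Reifenberg set whose transverse slice is a Cantor set of positive Hausdorff dimension; for this $S$ the map $\iota(c)$ must be uncountable, so no construction landing in $(\Rb^n)^N/\Sigma_N$ can succeed. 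The ``dichotomy'' you invoke (sheets are either $\ge 10r_i$-separated or $\le C\delta r_i$-close, nothing in between) is precisely what fails there: at each scale the Cantor set looks like at most two clusters, but across scales those clusters subdivide indefinitely, so your capping-at-$N$ and bipartite-matching steps both break. Your ``leading sheet'' argument for the lower bound also does not survive Example~\ref{the twist}, where the sheets wind around the origin and no global sheet can be singled out; Condition~1 of Definition~\ref{splitting reifenberg condition} only says $S$ is globally $\delta$-close to a single plane, not that any component of $S$ is.

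The paper avoids all of this by going in the \emph{opposite} direction: rather than building a multivalued parametrization $\Rb^k\to 2^S$, it constructs a single-valued H\"older map $\Phi:\Rb^n\to\Rb^k$ as a limit of smooth maps $\Phi_i$ (each obtained by re-linearizing the previous one along the local splitting directions, then averaging via a partition of unity). The key estimates are that each $\Phi_i$ is $C\delta r_i^{-1}$-regular and that $\ker\nabla\Phi_i$ is $C\delta$-close to $\hat l_{x,r_i}^\perp$. One then sets $\iota(c):=\Phi^{-1}(c)\cap S$; the cardinality of this set is never tracked and never bounded. The upper H\"older bound for $d_H(\iota(c),\iota(d))$ comes from controlling the Hausdorff distance between level sets of $\Phi$ (Proposition~\ref{distance between level sets of Phi intersecting S}), and the lower bound is simply the H\"older continuity of $\Phi$ itself (Corollary~\ref{Phi is Holder}), which requires no sheet decomposition at all.
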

\begin{rem}
Since each $\iota(c)$ is a set, the above expression (\ref{S intersect B1 is covered}) should be interpreted as 
\begin{equation}
    S\cap B_1(0^n)\subset \bigcup_{c\in B_{1}(0^k)}\iota(c).
\end{equation}
\end{rem}

\begin{rem}\label{main theorem general version}

The first condition for $(k,\delta,N)$-splitting Reifenberg sets is meant to address boundary issues. We actually have the following analog of the main theorem without that condition:

Let $S\subset B_2(0^n)$ be a closed set. Assume for each $B_r(x)\subset B_2(0^n)$, $S$ is a $(k,\delta)$-splitting with splitting set $A_{x,r}$, such that $|A_{x,r}^\perp|\le N$. Then there exists a map $\iota:B_{1}(0^k)\to (2^{S},d_H)$, such that for each $c,d\in B_{1}(0^k)$ with $|c-d|\le 1$,
\begin{equation}
    (1-C(n,N)\delta)|c-d|^{\frac{1}{1-\alpha}}\le d_H|_{B_1(0^n)}(\iota(c),\iota(d))\le (1+C(n,N)\delta)|c-d|^{1-\alpha},
\end{equation}
such that 
\begin{equation}
S\cap B_1(0^n)\subset \iota(B_1(0^k)).
\end{equation}
    
\end{rem}

\begin{rem}
In the above theorems, if we assume that for each $x\in B_{1.99}(0^n)$, there are at most $N'$ scales $s_j=2^{-j}$ such that the splitting set $A_{x,s_j}$ is not a single $k$-plane, then we can take the map $\iota$ in the main theorem, such that $|\iota(c)|\le C(n,N')$ for all $c\in B_1(0^k)$.
\end{rem}

The remaining part of this article will be divided into three chapters, with the final goal to prove Theorem \ref{main theorem}. As a one line introduction for each section: Section 2 will be general preliminaries and Section 3 will be the explanation of $(k,\delta,N)$-splitting Reifenberg sets. Section 4 will be the main part of the proof, where we construct a H\"older map $\Phi:\Rb^n\to \Rb^k$, take $\iota(c)$ as $\Phi^{-1}(c)\cap S$ and prove all the estimates.

\section{Preliminaries}

This article aims to be self contained. This section covers the notions and preliminaries that will be used later, including QR decomposition, Hausdorff distances, regular maps and and graphical submanifolds. Readers may feel free to skip this section and check it back when necessary.

\subsection{Derivatives and Norms}

This subsection recalls the derivatives of vector valued functions.

\begin{dfn}\label{derivative}

\bnu

\item Let $f:\Rb^n\to \Rb^k$ be a smooth map. The $h$-th derivative of $f$ at $x$ is a multilinear map $\nabla^h f(x):(\Rb^n)^h\to \Rb^k$,
\begin{equation}\nabla^hf(x)[v_1,\dots,v_h]=\partial_{v_1}\dots\partial_{v_h}f(x).\end{equation}

In particular, the derivative $\nabla f$ is a linear map $\Rb^n\to \Rb^k$, or a matrix is $M_{k\times n}(\Rb)$.

\item Let $f:\Rb^n\to M_{k\times l}(\Rb)$ be a matrix valued smooth map. The $h$-th derivative of $f$ at $x$ is a multilinear map: $\nabla^hf(x):(\Rb^n)^h\to M_{k\times l}(\Rb)$, \begin{equation}\nabla^hf(x)[v_1,\dots,v_h] = \partial_{v_1}\dots\partial_{v_h}f(x).\end{equation}

If we view $\nabla f(x)$ as a matrix then this definition is compatible with 1..

\item Let $f:\Rb^n\to \Rb^k$ be a smooth map and $B\in M_{m\times k}(\Rb)$. We may multiply a $B$ to $\nabla^hf$,
\begin{equation}(B\nabla^hf)(x)[v_1,\dots,v_h]:=B(\nabla^hf(x)[v_1,\dots,v_h])= \nabla^h(Bf)(x)[v_1,\dots,v_h].\end{equation}

If $f:\Rb^n\to M_{k\times l}(\Rb^n)$ is matrix valued, we may also multiply $B$ to $\nabla^hf$ the same way by matrix multiplication.

\enu

\end{dfn}

We'll always use $2$-norms for vectors, matrices and tensors, unless stated otherwise explicitly. And we will use the $L^\infty$ norm for maps. The $2$ and $L^\infty$ subscript will sometimes be omitted for simplicity.

\begin{dfn}\label{norm}

\bnu 

\item Let $M=(m_{ij})\in M_{k\times n}(\Rb)$ be a matrix.\begin{equation}|M|_2=(\sum_{i,j}m_{ij}^2)^{1/2}.\end{equation}
\item Let $f:\Rb^n\to\Rb^k$ be a smooth map. For $x\in \Rb^n$, the norm of $\nabla^hf$ at $x$ is \begin{equation}|\nabla^h f(x)|_2=(\sum_{i_1,\dots,i_h,j}\partial_{i_1}\dots\partial_{i_h}f_j(x)^{2})^{1/2}.\end{equation}
\item Assume $f:\Rb^n\to M_{k\times l}(\Rb)$ is matrix valued, the norm of $\nabla^hf$ at $x$ is
\begin{equation}|\nabla^h f(x)|_2=(\sum_{i_1,\dots,i_h,j,j'}\partial_{i_1}\dots\partial_{i_h}f_{jj'}(x)^{2})^{1/2},\end{equation}
where $f_{jj'}$ is the matrix entry.
\item Let $f:\Rb^n\to\Rb^k$ be a smooth map. The $L^\infty$ norm of $\nabla^hf$ is 

\begin{equation}||\nabla^h f||_{L^{\infty}}=\sup_x |\nabla^h f(x)|_2.\end{equation}
We may define the norm $||\nabla^hf||_{L^\infty}$ for matrix valued $f$ similarly.

\enu
  
\end{dfn}

The reason we choose these norms is that they are invariant under orthogonal transformations.

\begin{lem}\label{invariance of norm}

\bnu 

\item Let $M\in M_{k\times n}(\Rb)$ be a matrix. $O_1\in O(k)$, $O_2\in O(n)$ be orthogonal matrices. Then 
\begin{equation}|O_1M|=|MO_2|=|M|.\end{equation}
\item Let $f:\Rb^n \to \Rb^k$ be a smooth map, $O_1\in O(k)$, $O_2\in O(n)$. Then
\begin{equation}|O_1\nabla^h f(x)| = |\nabla^h f(x)|,\end{equation}
\begin{equation}|\nabla^h(f\circ O_2)(x)| = |\nabla^h f(O_2x)|.\end{equation}
The same is also true if $f$ is matrix valued.

\enu

\pf

\bnu

\item This is a standard result in linear algebra.

\item The first equality holds for any tensors with the same form. The second follows from the fact that
\begin{equation}\nabla^h(f\circ O_2)(x)[v_1,\dots,v_h]=\nabla^hf(O_2x)[O_2v_1,\dots,O_2v_h].
\end{equation}
\enu
\epf
\begin{rem}

Lemma \ref{invariance of norm} implies that $||\nabla^h f||$ is independent to the choice of orthogonal bases on $\Rb^n$ and $\Rb^k$.
    
\end{rem}

\end{lem}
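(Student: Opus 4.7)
The plan is to reduce everything to the invariance of the Euclidean norm on $\Rb^n$ under orthogonal transformations, combined with the identity $|M|_2^2 = \mathrm{tr}(M^T M)$ for the Frobenius norm.

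For part 1, this identity together with the cyclic invariance of the trace gives $|O_1 M|_2^2 = \mathrm{tr}(M^T O_1^T O_1 M) = \mathrm{tr}(M^T M)$ since $O_1^T O_1 = I_k$, and similarly $|M O_2|_2^2 = \mathrm{tr}(O_2^T M^T M O_2) = \mathrm{tr}(M^T M)$ using $O_2 O_2^T = I_n$.

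For part 2, I would first reduce the matrix-valued case to the scalar case by summing $|\nabla^h f_{jj'}|_2^2$ over matrix entries. For the first identity, fix a multi-index $(i_1, \dots, i_h)$ and consider the vector $w \in \Rb^k$ with $w_j = \partial_{i_1}\cdots\partial_{i_h} f_j(x)$. By linearity of differentiation, the analogous vector for $O_1 f$ is $O_1 w$, whose Euclidean norm equals that of $w$; summing over multi-indices yields the conclusion. For the second identity, iterating the chain rule and using that $O_2$ has constant entries produces $\nabla^h(f \circ O_2)(x)[v_1, \dots, v_h] = \nabla^h f(O_2 x)[O_2 v_1, \dots, O_2 v_h]$. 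In coordinates this realizes $\nabla^h(f \circ O_2)(x)$ as the tensor obtained from $\nabla^h f(O_2 x)$ by composing each of the $h$ input slots with $O_2$. One slot at a time, I would flatten the remaining indices and invoke part 1 to see that the norm is preserved; after $h$ steps the equality follows.

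The main obstacle, such as it is, lies purely in the notational bookkeeping: correctly tracking which tensor slot is acted on by $O_1$ on the codomain side versus by $O_2$ on the domain side, and justifying the slot-by-slot reduction to part 1. There is no analytic content beyond the elementary orthogonal invariance of the Euclidean inner product.
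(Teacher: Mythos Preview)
Your proposal is correct and follows essentially the same approach as the paper: part 1 via a standard linear-algebra argument, and part 2 via the chain-rule identity $\nabla^h(f\circ O_2)(x)[v_1,\dots,v_h]=\nabla^hf(O_2x)[O_2v_1,\dots,O_2v_h]$ together with orthogonal invariance of the tensor norm in each slot. The paper's proof is simply a terser version of yours, deferring part 1 to ``a standard result in linear algebra'' and summarizing your slot-by-slot reduction as ``holds for any tensors with the same form.''
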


\subsection{QR Decomposition}

This subsection recalls the QR decomposition for rectangular matrices.

We'll mostly concern matrices with full rank in this article. Among them, the distinguished ones are Riemannian submersions.
\begin{dfn}

For $n>k$, $\pi\in M_{k\times n}(\Rb)$, we say $\pi$ is a Riemannian submersion, denoted as $\pi\in O_{k\times n}(\Rb)$, if rows of $\pi$ form an orthogonal system.

\end{dfn}

\begin{rem}

For $\pi\in M_{k\times n}(\Rb^n)$, $\pi\in O_{k\times n}(\Rb)$ if and only if $\pi$ is isometric on $(\ker \pi)^\perp$ as a linear map.

\end{rem}

Now we state the QR decomposition for rectangular matrices, which is a standard result in linear algebra. It says that any rectangular matrix with full rank can be written uniquely as the product of a lower rectangular matrix and a Riemannian submersion.

\begin{prop}\label{QR}

For $n\ge k$, take $M\in M_{k\times n}(\Rb)$ with $rank(M)=k$. Then there is a unique decomposition $M=L(M)\pi(M)$ where $L(M)\in GL_{k}(\Rb)$ is lower triangular with positive diagonal entries and $\pi(M)\in O_{k\times n}(\Rb)$.

\end{prop}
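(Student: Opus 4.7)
The plan is to handle existence by Gram--Schmidt orthogonalization on the rows of $M$, and uniqueness by reducing to the uniqueness of the Cholesky decomposition of a positive definite matrix. This is essentially the rectangular ``LQ'' version of the standard QR factorization, tailored to the situation where the orthonormal factor has orthonormal rows rather than columns.

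For existence, I would let $m_1,\dots,m_k$ denote the rows of $M$. The full-rank hypothesis says these are linearly independent, so running Gram--Schmidt in the row direction produces orthonormal vectors $q_1,\dots,q_k\in\Rb^n$ with $\operatorname{span}(q_1,\dots,q_i)=\operatorname{span}(m_1,\dots,m_i)$ for each $i$. Writing $m_i=\sum_{j\le i}\ell_{ij}q_j$ and fixing the sign of each $q_i$ so that $\ell_{ii}>0$ yields a lower triangular matrix $L(M)=(\ell_{ij})\in GL_k(\Rb)$ with positive diagonal and a matrix $\pi(M)\in O_{k\times n}(\Rb)$ whose rows are $q_1,\dots,q_k$, and by construction $M=L(M)\pi(M)$.

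For uniqueness, suppose $M=L_1\pi_1=L_2\pi_2$ are two such factorizations. Since $\pi_i\pi_i^T=I_k$ by the definition of a Riemannian submersion, both factorizations give
\begin{equation}
    MM^T = L_i\pi_i\pi_i^T L_i^T = L_i L_i^T.
\end{equation}
The matrix $MM^T$ is positive definite (the rank of $M$ is $k$), so its Cholesky decomposition as a lower triangular matrix with positive diagonal times its transpose is unique. Hence $L_1=L_2$, and then $\pi_1=L_1^{-1}M=L_2^{-1}M=\pi_2$.

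The statement is a routine piece of linear algebra and I do not anticipate any real obstacle; the only item to be careful about is the sign normalization $\ell_{ii}>0$, which is simultaneously what singles out the Gram--Schmidt output and what singles out the Cholesky factor. An alternative route would be to apply the ordinary tall-matrix QR decomposition to $M^T$ and transpose, but the direct Gram--Schmidt plus Cholesky argument above seems cleanest.
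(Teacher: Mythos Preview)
Your proposal is correct. The paper does not actually prove this proposition: it is stated as a standard result in linear algebra, with the remark immediately afterward that ``$L$ and $\pi$ are clearly smooth maps on the space of matrices with full rank, since they are obtained from the Gram--Schmidt orthonormalization.'' Your existence argument is exactly this Gram--Schmidt construction, and your uniqueness argument via the Cholesky factorization of $MM^T$ is a clean way to supply the detail the paper omits.
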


$L$ and $\pi$ are clearly smooth maps on the space of matrices with full rank, since they are obtained from the Gram-Schmidt orthonormalization. The following lemma gives a $C^1$ control of them modulo $L$.

\begin{lem}\label{smoothness of QR}

There is a $C(n)>0$ such that
\begin{equation}
||L^{-1}\nabla L\circ \Lc_{L}||_{L^\infty}\le C(n),\; ||\nabla \pi\circ \Lc_{L}||_{L^\infty}\le C(n)
\end{equation}
in the space of $k\times n$ matrices with full rank. Here $\Lc_L$ is the operator of left multiplying $L$, i.e. 
\begin{equation}(L^{-1}\nabla L\circ \Lc_{L})(M)[B]=L(M)^{-1}\nabla L(M)[L(M)B].\end{equation}
    
\end{lem}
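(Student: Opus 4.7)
The plan hinges on a scaling invariance of QR: by uniqueness in Proposition~\ref{QR}, for any lower triangular $L_0\in GL_k(\Rb)$ with positive diagonal entries and any full-rank $M'$, one has $L(L_0 M')=L_0 L(M')$ and $\pi(L_0 M')=\pi(M')$. This is precisely the algebraic identity that the normalization $\Lc_L$ is designed to exploit.

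Fix a full-rank $M$, write $L:=L(M)$, $\pi:=\pi(M)$, and pick a test direction $B\in M_{k\times n}(\Rb)$. Consider the curve $M(t):=M+tLB=L(\pi+tB)$, and let $(\tilde L(t),\tilde \pi(t))$ be the QR factors of $\pi+tB$, so that $\tilde L(0)=I$ and $\tilde \pi(0)=\pi$. The scaling identity gives $L(M(t))=L\,\tilde L(t)$ and $\pi(M(t))=\tilde \pi(t)$, and differentiating at $t=0$ yields
\begin{equation}
L^{-1}\nabla L(M)[LB]=\tilde L'(0),\qquad \nabla\pi(M)[LB]=\tilde\pi'(0).
\end{equation}
So the entire lemma reduces to bounding $|\tilde L'(0)|$ and $|\tilde \pi'(0)|$ by $C(n)|B|$ uniformly for $\pi\in O_{k\times n}(\Rb)$ and $B\in M_{k\times n}(\Rb)$.

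I would then close with either of two routes. The clean one is compactness: QR is smooth on the open set of full-rank matrices (via Gram--Schmidt), and this set contains the Stiefel manifold $O_{k\times n}(\Rb)$, which is compact, so the derivatives of $L$ and $\pi$ are bounded there by a constant depending only on $n$. An explicit alternative is to differentiate $\tilde L(t)\tilde L(t)^T=(\pi+tB)(\pi+tB)^T$ at $t=0$, obtaining $\tilde L'(0)+\tilde L'(0)^T=B\pi^T+\pi B^T$; combined with the lower triangular shape of $\tilde L'(0)$ this pins down each entry as an inner product of the form $b_i\cdot\pi_j+\pi_i\cdot b_j$, giving $|\tilde L'(0)|\le C(n)|B|$ directly. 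The identity $\tilde\pi'(0)=B-\tilde L'(0)\pi$ (from differentiating $\pi+tB=\tilde L(t)\tilde\pi(t)$ at $t=0$) then closes the computation.

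There is no serious analytic obstacle here. The conceptual content of the lemma is precisely that inserting $\Lc_L$ normalizes away the blow-up of $\nabla L$ and $\nabla\pi$ one would otherwise see as $M$ approaches rank deficiency, reducing matters to the compact Stiefel manifold. This normalized form is what will let later arguments in the paper apply the lemma uniformly in $M$.
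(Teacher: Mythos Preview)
Your proof is correct and follows essentially the same route as the paper: both reduce the normalized derivatives to $\nabla L(\pi(M))[B]$ and $\nabla\pi(\pi(M))[B]$ via the scaling identity $L(L_0M')=L_0L(M')$, $\pi(L_0M')=\pi(M')$, and then invoke compactness of $O_{k\times n}(\Rb)$. Your optional explicit computation via $\tilde L'(0)+\tilde L'(0)^T=B\pi^T+\pi B^T$ is a nice addendum not present in the paper, but the core argument is the same.
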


\pf

Note that 
\begin{align}\begin{split}
L(M)^{-1}\nabla L(M)\circ \Lc_{L(M)}L(M)[B] &= L(M)^{-1}\left.\frac{\drm}{\drm t}\right\rvert_{t=0}L(M+tL(M)B)\\
&= \left.\frac{\drm}{\drm t}\right\rvert_{t=0}L(\pi(M)+tB)\\
&=\nabla L(\pi(M))[B].
\end{split}\end{align}
\begin{align}\begin{split}
\nabla \pi(M)\circ \Lc_{L(M)}[B] &= \dvert{t}{0}\pi(M+tL(M)B)\\
&= \dvert{t}{0}\pi(\pi(M)+tB)\\
&=\nabla \pi(\pi(M))[B].    
\end{split}\end{align}

Note that $\pi(M)\in O_{k\times n}(\Rb)$, the bounds follow immediately from the compactness of $O_{k\times n}(\Rb)$.

\epf

\begin{lem}\label{smoothness of QR cor1}
For $A,B\in M_{k\times n}(\Rb)$, assume $rank(A)=k$ and
\begin{equation}
    |L(A)^{-1}(A-B)|\le \delta
\end{equation}
for $\delta<\delta(n)$ sufficiently small. Then $B\in M^*_{k\times n}(\Rb)$ and
\begin{equation}
    |L(A)^{-1}L(B)-I_k|\le C(n)\delta,\;|\pi(A)-\pi(B)|\le C(n)\delta.
\end{equation}
\end{lem}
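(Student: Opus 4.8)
The plan is to rewrite everything in terms of the "normalized" quantities controlled by Lemma \ref{smoothness of QR}, so that the bounds become a simple consequence of that lemma together with the smoothness of the QR maps near the compact set $O_{k\times n}(\Rb)$. First I would set $E := L(A)^{-1}(A-B)$, so that $|E|\le\delta$ and $L(A)^{-1}B = \pi(A) - E$. The point is that $\pi(A)\in O_{k\times n}(\Rb)$ lies in a fixed compact set on which $L$ and $\pi$ are smooth, hence (say) $2$-Lipschitz with respect to a fixed neighborhood; since $|(\pi(A)-E) - \pi(A)| = |E|\le\delta$, for $\delta<\delta(n)$ small the matrix $\pi(A)-E$ still has full rank, which gives $L(A)^{-1}B\in M^*_{k\times n}(\Rb)$, and therefore $B\in M^*_{k\times n}(\Rb)$ as well (left-multiplying by the invertible $L(A)$ does not change the rank).

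Next I would exploit the scaling behavior of the QR decomposition under left multiplication. For any invertible lower-triangular $L$ with positive diagonal and any full-rank $M$, one has $L(LM) = L\,L(M)$ and $\pi(LM) = \pi(M)$, because $LM = (L\,L(M))\pi(M)$ is a valid QR factorization and the factorization is unique (Proposition \ref{QR}). Applying this with $M = L(A)^{-1}B = \pi(A)-E$ and $L = L(A)$ gives $L(B) = L(A)\,L(\pi(A)-E)$ and $\pi(B) = \pi(\pi(A)-E)$. Hence
\begin{equation}
L(A)^{-1}L(B) - I_k = L(\pi(A)-E) - L(\pi(A)), \qquad \pi(A)-\pi(B) = \pi(\pi(A)) - \pi(\pi(A)-E),
\end{equation}
using that $L(\pi(A)) = I_k$ and $\pi(\pi(A)) = \pi(A)$ (again by uniqueness, since $\pi(A)\in O_{k\times n}$ means $\pi(A) = I_k\cdot\pi(A)$ is its own QR factorization).

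Finally, both right-hand sides are differences of the smooth maps $L$, $\pi$ evaluated at $\pi(A)$ and at the nearby point $\pi(A)-E$. Since $\pi(A)$ ranges over the compact set $O_{k\times n}(\Rb)$, there is a fixed neighborhood $U$ of $O_{k\times n}(\Rb)$ in $M^*_{k\times n}(\Rb)$ on which $\|\nabla L\|_{L^\infty}$ and $\|\nabla\pi\|_{L^\infty}$ are bounded by some $C(n)$ (this is essentially the content of Lemma \ref{smoothness of QR}, specialized to the locus where $L = I_k$, where the conjugation by $\Lc_L$ is trivial); for $\delta<\delta(n)$ small the segment from $\pi(A)$ to $\pi(A)-E$ stays in $U$, so the mean value inequality gives $|L(\pi(A)-E) - L(\pi(A))|\le C(n)|E|\le C(n)\delta$ and likewise $|\pi(\pi(A)) - \pi(\pi(A)-E)|\le C(n)\delta$. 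This yields the two claimed estimates. The only mildly delicate point is the first one — checking that $\pi(A)-E$ remains full rank and stays inside the neighborhood where the derivative bounds hold — but this is immediate once one observes that $O_{k\times n}(\Rb)$ is compact and that $E$ is uniformly small; there is no real obstacle beyond bookkeeping.
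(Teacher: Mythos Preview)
Your proposal is correct and follows essentially the same approach as the paper: set $B' = L(A)^{-1}B = \pi(A)-E$, use the scaling identities $L(B') = L(A)^{-1}L(B)$ and $\pi(B') = \pi(B)$, and then appeal to the smoothness of $L$ and $\pi$ on a compact neighborhood of $O_{k\times n}(\Rb)$. Your write-up is in fact a bit more explicit than the paper's (you justify the scaling identities via uniqueness of QR and spell out the full-rank check), but the argument is the same.
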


\pf
Let $B'=L(A)^{-1}B$. Then $|\pi(A)-B'|\le \delta$. We have $L(B')=L(A)^{-1}L(B)$, $\pi(B')=\pi(B)$. It suffices to prove
\begin{equation}
    |L(B')-I_k|\le C(n)\delta, \;|\pi(B')-\pi(A)|\le C(n)\delta.
\end{equation}
Note that $L$ and $\pi$ are smooth in $B_{C(n)\delta}(O_{k\times n}(\Rb))$, which is a compact neighborhood of $O_{k\times n}(\Rb)$. $||\nabla L||_{L^{\infty}}$, $||\nabla \pi||_{L^\infty}\le C(n)$ in this neighborhood. Thus the conclusion is proved.
\epf

\subsection{Hausdorff Distance}

In this subsection we'll introduce the Hausdorff distance.

The Hausdorff distance is the analog of $L^\infty$ distance for subsets of a metric space. For simplicity we only concern subsets of $\Rb^n$.

\begin{dfn}
Let $A,B\subset \Rb^n$. The Hausdorff distance between $A$ and $B$ is
\begin{equation}
d_H(A,B):=\inf\{s>0\,|\,A\subset B_s(B), B\subset B_s(A)\}. 
\end{equation}
\end{dfn}

Define the space of closed and finite subsets of $\Rb^n$ as following. 

\begin{dfn}

Let $2^{\Rb^n}$ be the space of closed subsets of $\Rb^n$. Let $2^{\Rb^n}_N$ be the space of subsets of $\Rb^n$ that contains at most $N$ points.
    
\end{dfn}

It is well known that $d_H$ is indeed a metric.
\begin{lem}
$d_H$ is a metric on $2^{\Rb^n}$ and $2^{\Rb^n}_N$.
\end{lem}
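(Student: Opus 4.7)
The plan is to verify directly the three defining axioms of a metric: non-negativity with identity of indiscernibles, symmetry, and the triangle inequality. Throughout, the values of $d_H$ should be allowed in $[0,\infty]$ (for unbounded $A$, $B$ distances could be infinite), but all arguments go through unchanged.

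Symmetry is built into the definition: the admissible set $\{s>0 : A\subset B_s(B),\; B\subset B_s(A)\}$ is invariant under swapping $A$ and $B$, so $d_H(A,B)=d_H(B,A)$. For identity of indiscernibles, $A=B$ immediately gives $d_H(A,B)=0$; conversely, if $d_H(A,B)=0$ then for every $\varepsilon>0$ and every $a\in A$ there is some $b\in B$ with $|a-b|<\varepsilon$, so $a\in\overline{B}$. Here closedness is crucial: for $2^{\Rb^n}$ it is assumed, and for $2^{\Rb^n}_N$ it is automatic since finite sets are closed. Hence $a\in B$, giving $A\subset B$, and by symmetry $A=B$.

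For the triangle inequality, fix $A,B,C$ with $r=d_H(A,B)$ and $s=d_H(B,C)$ both finite (otherwise there is nothing to prove). For every $\varepsilon>0$ we have $A\subset B_{r+\varepsilon}(B)$ and $B\subset B_{s+\varepsilon}(C)$. Chaining: given $a\in A$, pick $b\in B$ with $|a-b|<r+\varepsilon$, then $c\in C$ with $|b-c|<s+\varepsilon$, so $|a-c|<r+s+2\varepsilon$. Thus $A\subset B_{r+s+2\varepsilon}(C)$, and by the symmetric argument $C\subset B_{r+s+2\varepsilon}(A)$. Therefore $d_H(A,C)\le r+s+2\varepsilon$, and letting $\varepsilon\to 0^+$ yields $d_H(A,C)\le d_H(A,B)+d_H(B,C)$. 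The verification for $2^{\Rb^n}_N$ is identical since it is a subcollection of $2^{\Rb^n}$.

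The statement is essentially a classical textbook exercise and contains no real obstacle. The only point requiring any care is the identity of indiscernibles, where closedness is used to promote ``$a\in\overline{B}$ for all $a\in A$'' to ``$A\subset B$''; this is precisely why the definition of $2^{\Rb^n}$ restricts to closed subsets.
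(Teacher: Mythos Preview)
Your proof is correct and is the standard verification of the metric axioms. The paper itself gives no proof of this lemma at all: it simply prefaces the statement with ``It is well known that $d_H$ is indeed a metric'' and moves on. So there is nothing to compare against; you have supplied a complete argument where the paper defers to common knowledge.
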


Actually, these spaces are complete under $d_H$.

\begin{lem}
$(2^{\Rb^n},d_H)$ and $(2^{\Rb^n}_N,d_H)$ are complete.
\end{lem}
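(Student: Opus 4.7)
The plan is to construct an explicit limit for any Cauchy sequence in $2^{\Rb^n}$, verify convergence in the Hausdorff metric, and then control the cardinality for the $2^{\Rb^n}_N$ case.

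First, I would take a Cauchy sequence $\{A_i\} \subset 2^{\Rb^n}$ and, by passing to a subsequence, assume the sharper Cauchy estimate $d_H(A_i, A_{i+1}) < 2^{-i}$. Define the candidate limit
\[
A := \bigcap_{k=1}^\infty \overline{\bigcup_{i \geq k} A_i},
\]
which is automatically closed as an intersection of closed sets. Telescoping gives $d_H(A_k, A_i) < 2^{-k+1}$ for every $i \geq k$, so the task reduces to showing $A_k \subset B_{2^{-k+1}}(A)$ and $A \subset B_{2^{-k+1}}(A_k)$.

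For the first containment, given $x \in A_k$ I would inductively pick $x_i \in A_i$ starting from $x_k = x$, choosing each $x_{i+1}$ with $|x_i - x_{i+1}| < 2^{-i}$ (possible by the refined Cauchy estimate). The sequence $\{x_i\}$ is Cauchy in $\Rb^n$, hence converges to some $y$, and by construction $y$ lies in the closure of $\bigcup_{i \geq j} A_i$ for every $j \geq k$, so $y \in A$; the bound $|x - y| \leq 2^{-k+1}$ is immediate. For the second containment, given $y \in A$ and any $\epsilon > 0$, I would pick $y_i \in A_i$ for some $i \geq k$ with $|y - y_i| < \epsilon$, then invoke $d_H(A_i, A_k) < 2^{-k+1}$ to find a point of $A_k$ within distance $2^{-k+1} + \epsilon$ of $y$; letting $\epsilon \to 0$ finishes this direction.

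For $2^{\Rb^n}_N$, it remains to argue that the limit $A$ has at most $N$ points. This follows by a separation argument: if $A$ contained distinct points $y_1, \ldots, y_{N+1}$ pairwise separated by at least $3\epsilon$, then for $k$ large enough that $d_H(A_k, A) < \epsilon$, each $y_j$ would have a point of $A_k$ within distance $\epsilon$, and these $N+1$ points of $A_k$ would necessarily be distinct, contradicting $|A_k| \leq N$. The main thing to get right is the correct definition of the limit set $A$; once this is in place, everything follows from the Cauchy property, the completeness of $\Rb^n$, and the triangle inequality, so I do not expect a serious obstacle beyond careful bookkeeping.
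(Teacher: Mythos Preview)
Your proposal is correct and follows essentially the same route as the paper: pass to a rapidly Cauchy subsequence, build the limit set, verify the two containments by chaining nearby points through successive $A_i$, and handle $2^{\Rb^n}_N$ by a separation/pigeonhole argument. The only cosmetic difference is that you define the limit as the Kuratowski upper limit $\bigcap_k \overline{\bigcup_{i\ge k} A_i}$ while the paper uses $\{x : d(x,A_i)\to 0\}$; for a $d_H$-Cauchy sequence these coincide, and the subsequent estimates are identical.
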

\pf
First consider the space $(2^{\Rb^n},d_H)$. Let $A_i\subset \Rb^n$ be a sequence of closed sets that is Cauchy in $d_H$. By taking a subsequence we may assume $d_H(A_i, A_{i+1})\le 2^{-i-1}$. Let
\begin{equation}
    A:=\{x\in \Rb^n\,|\,d(x,A_i)\to 0\}.
\end{equation}
Then $A$ is clearly closed. It suffices to check 
\begin{equation}
    d_H(A_i,A)\le 2^{-i}\text{ for each }i.
\end{equation}
For any $x_i\in A_i$, there exists $x_{i+1},x_{i+2},\dots$ with $x_{i+j}\in A_{i+j}$ and $d(x_{i+j},x_{i+j+1})\le 2^{-i-j-1}$. Thus $\{x_{i+j}\}$ converges to some $x\in A$ and $d(x_i,A)\le 2^{-i}$. Conversely, for any $x\in A$ and any $\epsilon>0$, there is some $x_{i+k}\in A_{i+k}$ with $d(x,x_{i+k})< \epsilon$. Similarly, there exists $x_i, x_{i+1},\dots,x_{i+k-1}$ with $x_{i+j}\in A_{i+j}$, $d(x_{i+j},x_{i+j+1})\le 2^{-i-j-1}$. Thus $d(x,A_i)< 2^{-i}+\epsilon$. Let $\epsilon \to 0$ we get $d(x,A_i)\le 2^{-i}$. Thus $d_H(A_i,A)\le 2^{-i}$ and $A_i\to A$ in $d_H$.

For the space $2^{\Rb}_N$, we only need to show it is closed in $d_H$. Assume $A_i\to A$ with $|A_i|\le N$. If $|A|\ge N+1$, pick $x_1, \dots,x_{N+1}$ in $A$. There is $\epsilon>0$ such that $d(x_j,x_k)>\epsilon$ for any $j,k$. There exists some $A_i$ with $d(A_i,A)<\epsilon/2$. Then $|A_i|\ge N+1$ because it contains a point in $B_{\epsilon/2}(x_j)$ for each $j$. This is a contradiction. Thus $|A|\le N$ and the proof is finished.
\epf

Aside from the Hausdorff distance, let's recall the definition for the distance between subsets of $\Rb^n$.
\begin{dfn}
    For two subsets $A,B\subset \Rb^n$, define their distance to be
    \begin{equation}
        dist(A,B):=\inf\{d(a,b)\,|\,a\in A,b\in B\}.
    \end{equation}
\end{dfn}
\begin{rem}
    It is clear that $dist(A,B)\le d_H(A,B)$.
\end{rem}

\subsection{Local Hausdorff Distance}

In this subsection we'll introduce the local Hausdorff distance.

The definition is an analog of the Hausdorff distance.
\begin{dfn}[local Hausdorff distance]

For $A,B\subset\Rb^n$, $B_r(x)\subset \Rb^n$. \begin{equation}d_H|_{B_r(x)}(A,B):=\inf\{s>0:A\cap B_r(x)\subset B_s(B),B\cap B_r(x)\subset B_s(A)\}.\end{equation}
    
\end{dfn}

One should beware that $d_H|_{B_r(x)}$ is not a metric, because it does not have subadditivity.

\begin{expl}

For small $\epsilon>0$, define
\begin{equation}
A=\{0,1-\epsilon\}, \; B=\{0,1\},\; C=\{0,1+\epsilon\}\subset \Rb.
\end{equation}
Then
\begin{equation}
d_H|_{B_r(x)}(A,B)=\epsilon,\;d_H|_{B_r(x)}(B,C)=0,\;d_H|_{B_r(x)}(A,C)=2\epsilon.
\end{equation}
Thus
\begin{equation}
    d_H|_{B_r(x)}(A,B)+d_H|_{B_r(x)}(B,C)<d_H|_{B_r(x)}(A,C).
\end{equation}
\end{expl}

However, we will still primarily use the local Hausdorff distance in this article, since it passes to subsets more easily. The following examples shows that the Hausdorff distance could be inconvenient when we pass to a subset.

\begin{expl}

For small $\epsilon>0$, define
\begin{equation}
A=\{0,1-\epsilon\}, \; B=\{0,1+\epsilon\}\subset \Rb.
\end{equation}
Then
\begin{equation}d_H(A\cap B_1(0),B\cap B_1(0))=1-\epsilon,\end{equation}
\begin{equation}d_H|_{B_1(x)}(A,B)=2\epsilon.\end{equation}
    
\end{expl}

In the above example, we expect $A$ and $B$ to be Hausdorff close locally. This is only observed in the local Hausdorff distance.

The following lemma summarizes the basic properties of the local Hausdorff distance, whose proofs are standard.

\begin{lem}\label{local Hausdorff distance properties}

The local Hausdorff distance has the following properties.

\bnu

\item $d_H|_{B_r(x)}(A,B)\le \delta$ if and only if for any $a\in A\cap B_r(x)$, $b\in B\cap B_r(x)$, we have 
\begin{equation}
    d(a,B)\le \delta,\;d(b,A)\le \delta.
\end{equation}


\item If $d_H|_{B_r(x)}(A,B)\le \delta_1$, $d_H|_{B_r(x)}(B,C)\le \delta_2$ with $\delta_1\le\delta_2$, then 
\begin{equation}
    d_H|_{B_{r-\delta_2}(x)}(A,C)\le \delta_1+\delta_2.
\end{equation}
\item $d_H|_{B_r(x)}(A,B)\le d_H(A\cap B_r(x),B\cap B_r(x))$.
\item If $B_r'(x')\subset B_r(x)$, then
\begin{equation}
    d_H|_{B_r'(x')}(A,B)\le d_H|_{B_r(x)}(A,B).
\end{equation}
\enu
\end{lem}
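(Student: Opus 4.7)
The plan is to derive all four items directly from the definition of the local Hausdorff distance, with item 1 serving as a reformulation in terms of point-to-set distances that powers the proofs of the remaining three. For item 1 I would unpack the defining infimum: $d_H|_{B_r(x)}(A,B)\le\delta$ says exactly that for every $s>\delta$ one has $A\cap B_r(x)\subset B_s(B)$ and symmetrically, which in point form reads $d(a,B)<s$ for all $a\in A\cap B_r(x)$. Letting $s\downarrow\delta$ and using symmetry yields one direction, and the converse follows because the point-to-set inequalities give the inclusions back for any $s>\delta$.

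Items 3 and 4 are essentially corollaries. For item 3, if $d_H(A\cap B_r(x),B\cap B_r(x))\le s$ then $A\cap B_r(x)\subset B_s(B\cap B_r(x))\subset B_s(B)$ and symmetrically, so the infimum defining $d_H|_{B_r(x)}$ is at most $s$. For item 4, any $a\in A\cap B_{r'}(x')$ already lies in $A\cap B_r(x)$, so item 1 gives $d(a,B)\le d_H|_{B_r(x)}(A,B)$; combining with the analogous statement for $b\in B\cap B_{r'}(x')$ and reapplying item 1 in reverse recovers the claimed monotonicity.

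The only item requiring real care is item 2, the quasi-triangle inequality. The main obstacle is that applying the second hypothesis requires the chosen intermediate point from $B$ to actually lie in $B_r(x)$, and this is precisely what the shrinkage from $r$ to $r-\delta_2$ and the asymmetry $\delta_1\le\delta_2$ are engineered to provide. Given $a\in A\cap B_{r-\delta_2}(x)$, my plan is to use item 1 on $(A,B)$ to extract, for arbitrary $\epsilon>0$, some $b\in B$ with $d(a,b)\le\delta_1+\epsilon$. The triangle inequality then gives $d(b,x)\le d(a,x)+\delta_1+\epsilon<(r-\delta_2)+\delta_1+\epsilon$, which is strictly less than $r$ once $\epsilon$ is small, using both $\delta_1\le\delta_2$ and the strict slack in $d(a,x)<r-\delta_2$. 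Hence $b\in B\cap B_r(x)$, and a second application of item 1 to $(B,C)$ at this $b$ produces $c\in C$ with $d(b,c)\le\delta_2+\epsilon$, so $d(a,c)\le\delta_1+\delta_2+2\epsilon$. Sending $\epsilon\to 0$ bounds $d(a,C)$ by $\delta_1+\delta_2$. A symmetric argument starting from $c\in C\cap B_{r-\delta_2}(x)$ handles the other side, where the intermediate $b\in B$ again stays inside $B_r(x)$ by the same slack estimate, and repackaging through item 1 completes the proof.
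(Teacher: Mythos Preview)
Your proof is correct. The paper does not supply a proof of this lemma; it simply states the result with the remark that ``the proofs are standard.'' Your argument fills this in accurately, and your handling of item 2 correctly identifies the key point: the shrinkage to radius $r-\delta_2$ together with the strict inequality $d(a,x)<r-\delta_2$ (open ball) guarantees the intermediate point $b$ lands in $B_r(x)$. One small presentational remark: in your chain $d(b,x)\le d(a,x)+\delta_1+\epsilon<(r-\delta_2)+\delta_1+\epsilon$, the right side is not literally $<r$ when $\delta_1=\delta_2$; the argument really runs by fixing $a$, setting $\eta=r-\delta_2-d(a,x)>0$, and choosing $\epsilon<\eta$ so that $d(b,x)\le r-\eta+\epsilon<r$. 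You clearly have this in mind when you invoke ``the strict slack,'' so the logic is sound even if the displayed inequality could be sharpened.
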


\subsection{Distance between $k$-subspaces}

In this subsection we will define a distance between $k$-subspaces of $\Rb^n$, or equivalently a metric on the Grassmannian $\mathbf{Gr}(k,n)$. Many different definitions exist in literature and any of them will probably work for the use of this article. For the cleanness of the result, we'll choose the one that generalizes the local Hausdorff distance.

\begin{dfn}\label{metric on Grassmannian}

Let $\hat{l}_1,\hat{l}_2$ be $k$-linear subspaces of $\Rb^n$, their distance is defined to be

\begin{equation}d(\hat{l}_1,\hat{l}_2):=d_H|_{B_1(0^n)}(\hat{l}_1,\hat{l}_2)=\max\{\sup_{x_1\in \hat{l}_1\cap B_1(0^n)} d(x_1,\hat{l}_2), \sup_{x_2\in \hat{l}_2\cap B_1(0^n)} d(x_2,\hat{l}_1)\}.\end{equation}
    
\end{dfn}

Despite that the local Hausdorff distance is not a metric, the distance between $k$-subspaces is indeed a metric.

\begin{lem}

The $d$ in Definition \ref{metric on Grassmannian} is a metric on $\mathbf{Gr}(k,n)$.

\end{lem}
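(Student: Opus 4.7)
The plan is to verify the three axioms of a metric: non-negativity with the identity of indiscernibles, symmetry, and the triangle inequality. Symmetry is built into the definition of the local Hausdorff distance, and non-negativity is immediate. What actually requires thought is (a) the implication $d(\hat{l}_1,\hat{l}_2)=0 \Rightarrow \hat{l}_1=\hat{l}_2$, where linearity is used to upgrade "close on $B_1(0^n)$" to "globally equal", and (b) the triangle inequality, which the example preceding Lemma \ref{local Hausdorff distance properties} shows can fail for the local Hausdorff distance in general. The saving observation for subspaces is that orthogonal projection onto a linear subspace is $1$-Lipschitz and fixes the origin, so it sends $B_1(0^n)$ into itself.

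For separation, suppose $d(\hat{l}_1,\hat{l}_2)=0$. Then every $x\in \hat{l}_1\cap B_1(0^n)$ satisfies $d(x,\hat{l}_2)=0$, hence $x\in \hat{l}_2$ since $\hat{l}_2$ is closed. By homogeneity, any $y\in \hat{l}_1$ can be scaled into $B_1(0^n)$, so the same argument yields $\hat{l}_1\subset \hat{l}_2$. The reverse inclusion is symmetric, and equality follows.

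For the triangle inequality, I would fix three subspaces $\hat{l}_1,\hat{l}_2,\hat{l}_3$ and pick any $x_1\in \hat{l}_1\cap B_1(0^n)$. Let $x_2$ denote the orthogonal projection of $x_1$ onto $\hat{l}_2$. Since this projection is $1$-Lipschitz and fixes $0$, we have $|x_2|\le |x_1|\le 1$, so $x_2\in \hat{l}_2\cap B_1(0^n)$; moreover, $|x_1-x_2|=d(x_1,\hat{l}_2)\le d(\hat{l}_1,\hat{l}_2)$. Therefore
\begin{equation}
d(x_1,\hat{l}_3)\le |x_1-x_2|+d(x_2,\hat{l}_3)\le d(\hat{l}_1,\hat{l}_2)+d(\hat{l}_2,\hat{l}_3).
\end{equation}
Running the symmetric argument starting from $x_3\in \hat{l}_3\cap B_1(0^n)$ (again projecting onto $\hat{l}_2$) gives the matching bound with roles reversed, and taking the supremum over both sides yields $d(\hat{l}_1,\hat{l}_3)\le d(\hat{l}_1,\hat{l}_2)+d(\hat{l}_2,\hat{l}_3)$. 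The main (and really the only) obstacle is the aforementioned failure of subadditivity for the general local Hausdorff distance; once one notices that linear projections preserve $B_1(0^n)$, the estimate becomes elementary and avoids the scale loss appearing in Lemma \ref{local Hausdorff distance properties}(2).
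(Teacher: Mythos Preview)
Your proof is correct and takes essentially the same approach as the paper: for the triangle inequality you project $x_1\in\hat{l}_1\cap B_1(0^n)$ orthogonally onto $\hat{l}_2$ and observe the projection stays in $B_1(0^n)$, exactly as the paper does (the paper phrases this via $|x_1|^2=|x_1-x_2|^2+|x_2|^2$, you via ``projection is $1$-Lipschitz and fixes $0$''). Your write-up is slightly more thorough in that it also spells out the separation axiom, which the paper omits.
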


\pf

We only need to check subadditivity. Let $\hat{l}_1$, $\hat{l}_2$, $\hat{l}_3$ be $k$-subspaces of $\Rb^n$. For any $x\in \hat{l}_1\cap B_1(0^n)$, there exists $x_2\in \hat{l}_2$ with \begin{equation}|x_1-x_2|\le d(\hat{l}_1,\hat{l}_2).\end{equation}

Since $|x_1|^2=|x_1-x_2|^2+|x_2|^2$, we have $x_2\in B_1(0^n)$. Similarly, there is an $x_3\in B_1(0^n)\cap\hat{l}_3$ such that $d(x_2,x_3)\le d(\hat{l}_2,\hat{l}_3)$. Thus $d(x_1,\hat{l}_3)\le d(\hat{l}_1,\hat{l}_2)+d(\hat{l}_2,\hat{l}_3).$ Therefore
\begin{equation}d(\hat{l}_1,\hat{l}_3)\le  d(\hat{l}_1,\hat{l}_2)+d(\hat{l}_2,\hat{l}_3).\end{equation}
\epf

There are a couple pleasant ways to rewrite the definition.

\begin{lem}\label{equivalent definition of distance of subspaces}

Let $\hat{l}_1,\hat{l}_2$ be $k$-linear subspaces of $\Rb^n$.
\bnu
\item The maximal distance is attained by both expressions.
\begin{equation}
d(\hat{l}_1,\hat{l}_2)=\sup_{x_1\in \hat{l}_1\cap B_1(0^n)} d(x_1,\hat{l}_2)=\sup_{v\in \hat{l}_1\cap \Sb^{n-1}} d(v,\hat{l}_2).
\end{equation}
\item Let $P_1$, $P_2$ be the orthogonal projections to $\hat{l}_1$, $\hat{l}_2$. Then
\begin{equation}
d(\hat{l}_1,\hat{l}_2)=|P_1-P_2|_{op}.
\end{equation}

\enu

\end{lem}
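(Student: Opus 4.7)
The plan is to prove part (1) first, because part (2) will reuse the symmetry it establishes. Write $P_1, P_2$ for the orthogonal projections onto $\hat{l}_1, \hat{l}_2$ throughout.

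For part (1), the two equalities split into two small steps. The ball-versus-sphere equality is immediate from the fact that $d(x, \hat{l}_2) = |(I - P_2) x|$ is positively homogeneous in $x$, so the maximum on a ball is attained on its boundary. The ``dropping the max'' equality, i.e.\ $\sup_{v \in \hat{l}_1 \cap \Sb^{n-1}} d(v, \hat{l}_2) = \sup_{w \in \hat{l}_2 \cap \Sb^{n-1}} d(w, \hat{l}_1)$, I plan to reduce to a spectral statement by writing $d(v, \hat{l}_2)^2 = 1 - |P_2 v|^2$ for unit $v \in \hat{l}_1$ and extremizing. The key observation is that $P_2|_{\hat{l}_1} : \hat{l}_1 \to \hat{l}_2$ and $P_1|_{\hat{l}_2} : \hat{l}_2 \to \hat{l}_1$ are mutually adjoint, a direct consequence of the symmetry of the Euclidean inner product; combined with $\dim \hat{l}_1 = \dim \hat{l}_2 = k$, this forces $P_1 P_2 P_1|_{\hat{l}_1}$ and $P_2 P_1 P_2|_{\hat{l}_2}$ to have identical spectra, hence the same minimal eigenvalue and so the same supremum.

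For part (2), the easy direction $d(\hat{l}_1, \hat{l}_2) \le |P_1 - P_2|_{op}$ follows by noting that for $v \in \hat{l}_1 \cap \Sb^{n-1}$ one has $P_1 v = v$, so $d(v, \hat{l}_2) = |(I - P_2) v| = |(P_1 - P_2) v| \le |P_1 - P_2|_{op}$, and likewise on the other side. For the harder direction I decompose any unit $v \in \Rb^n$ as $v = a + a^\perp$ with $a \in \hat{l}_1$ and $a^\perp \in \hat{l}_1^\perp$, and rewrite $(P_1 - P_2) v = (I - P_2) a - P_2 a^\perp$. Crucially, these two summands are orthogonal because $(I - P_2) a \in \hat{l}_2^\perp$ and $P_2 a^\perp \in \hat{l}_2$, so their squared norms add. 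Setting $D := d(\hat{l}_1, \hat{l}_2)$, the first summand is controlled by $D^2 |a|^2$ immediately from the definition applied to $a/|a|$. For the second, I will apply the same adjoint trick as in part (1), now with $(P_2|_{\hat{l}_1^\perp})^* = P_1^\perp|_{\hat{l}_2}$, to identify $\sup_{u \in \hat{l}_1^\perp \cap \Sb^{n-1}} |P_2 u| = \sup_{w \in \hat{l}_2 \cap \Sb^{n-1}} d(w, \hat{l}_1) = D$, which bounds the second summand by $D^2 |a^\perp|^2$. Summing and using $|a|^2 + |a^\perp|^2 = 1$ closes the proof.

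The main conceptual ingredient, and the step I expect to require the most care, is the pair of adjoint identities between restricted projections; they simultaneously power the symmetry in part (1) and the control of $|P_2 a^\perp|$ in part (2). Everything else reduces to short Euclidean computations.
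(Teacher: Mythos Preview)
Your proposal is correct and uses the same core decomposition as the paper: write a unit $v$ as $a + a^\perp \in \hat{l}_1 \oplus \hat{l}_1^\perp$, split $(P_1-P_2)v = (I-P_2)a - P_2 a^\perp$ orthogonally in $\hat{l}_2^\perp \oplus \hat{l}_2$, and bound each piece by $D$. The only structural difference is the order of the two parts. The paper proves $|P_1-P_2|_{op} = \sup_{v\in\hat{l}_1\cap\Sb^{n-1}} d(v,\hat{l}_2)$ first and then gets part~(1) for free from the trivial symmetry $|P_1-P_2|_{op}=|P_2-P_1|_{op}$; this avoids your separate spectral argument for the equality of the two one-sided suprema. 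On the other hand, the paper's proof simply asserts the identity $|(\id-P_2)|_{\hat{l}_1}|_{op} = |P_2|_{\hat{l}_1^\perp}|_{op}$ without justification, and your adjoint computation $(P_2|_{\hat{l}_1^\perp})^* = P_1^\perp|_{\hat{l}_2}$ (together with part~(1)) is exactly what one needs to fill that in. So the two routes are essentially the same computation run in opposite directions, with the paper's ordering slightly slicker and yours slightly more self-contained.
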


\pf

\bnu

\item This will be proved together with 2.

\item We'll prove that $|P_1-P_2|_{op}= \sup_{v\in \Sb^{n-1}\cap \hat{l}_1} d(v,\hat{l}_2)$. By symmetry this will imply 1. and 2.. It is clear that $|P_1-P_2|_{op}\ge \sup_{v\in \Sb^{n-1}\cap \hat{l}_1} d(v,\hat{l}_2)$. Thus we only need to prove the other direction.

Take any unit vector $v\in \Rb^n$, $v = v_1 + v_1^\perp$ where $v_1\in \hat{l}_1$, $v_1^\perp\in \hat{l}_1^\perp$. Then
\begin{equation}
P_1(v)-P_2(v)=(v_1-P_2(v_1))-P_2(v_1^\perp)
\end{equation}
with $(v_1-P_2(v_1))\in \hat{l}_2^\perp$, $P_2(v_1^\perp)\in \hat{l}_2$. Thus
\begin{align}\begin{split}
    |P_1(v)-P_2(v)|^2&=|(v_1-P_2(v_1))|^2+|P_2(v_1^\perp)|^2\\
    &\le |\left.(\id-P_2)\right\rvert_{\hat{l}_1}|_{op}^2|v_1|^2+|\left.P_2\right\rvert_{\hat{l}_1^{\perp}}|_{op}^2|v_1^{\perp}|^2\\
    &\le \max\{|\left.(\id-P_2)\right\rvert_{\hat{l}_1}|_{op},|\left.P_2\right\rvert_{\hat{l}_1^{\perp}}|_{op}\}^2
\end{split}\end{align}
Note that
\begin{equation}|\left.(\id-P_2)\right\rvert_{\hat{l}_1}|_{op}=|\left.P_2\right\rvert_{\hat{l}_1^{\perp}}|=\sup_{v\in \Sb^{n-1}\cap \hat{l}_1} d(v,\hat{l}_2),\end{equation}
we have $|P_1-P_2|_{op}\le \sup_{v\in \Sb^{n-1}\cap \hat{l}_1} d(v,\hat{l}_2)$.
\enu
\epf

The distance between subspaces is invariant after taking the orthogonal complements.

\begin{lem}

Let $\hat{l}_1,\hat{l}_2$ be $k$-linear subspaces of $\Rb^n$ and $\hat{l}_1^\perp$, $\hat{l}_2^\perp$ be their orthogonal complements, then
\begin{equation}d(\hat{l}_1,\hat{l}_2)=d(\hat{l}^\perp_1,\hat{l}_2^\perp).\end{equation}

\end{lem}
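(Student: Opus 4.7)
The plan is to apply the projection characterization of the subspace distance that was just established in part~2 of Lemma \ref{equivalent definition of distance of subspaces}, which makes this essentially a one-line identity. First I would let $P_1, P_2$ denote the orthogonal projections onto $\hat{l}_1, \hat{l}_2$ respectively, so that by the previous lemma
\begin{equation}
d(\hat{l}_1,\hat{l}_2) = |P_1 - P_2|_{op}.
\end{equation}

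The key observation is that the orthogonal projection onto $\hat{l}_i^\perp$ is precisely $\id - P_i$, since every $v\in \Rb^n$ splits uniquely as $P_i(v) + (\id - P_i)(v)$ with the two summands lying in $\hat{l}_i$ and $\hat{l}_i^\perp$ respectively. Applying Lemma \ref{equivalent definition of distance of subspaces} again (now with the roles played by the $(n-k)$-dimensional subspaces $\hat{l}_1^\perp, \hat{l}_2^\perp$, which is a legitimate substitution since that lemma's proof never used the dimension being $k$ rather than $n-k$), I get
\begin{equation}
d(\hat{l}_1^\perp,\hat{l}_2^\perp) = |(\id - P_1) - (\id - P_2)|_{op} = |P_2 - P_1|_{op} = |P_1 - P_2|_{op}.
\end{equation}
Combining the two displays yields the claim.

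There is no real obstacle here; the only subtlety worth flagging is whether Lemma \ref{equivalent definition of distance of subspaces} applies to subspaces of dimension $n-k$ rather than $k$, but its proof uses only that $\hat{l}_1$ and $\hat{l}_2$ are linear subspaces of the same dimension, so this is immediate. If one wanted a more self-contained argument avoiding the projection formula, one could instead pick a unit vector $v\in \hat{l}_1^\perp$ realizing (up to $\epsilon$) the supremum $\sup_{v\in\hat{l}_1^\perp\cap\Sb^{n-1}} d(v,\hat{l}_2^\perp)$ and decompose $v = v_2 + v_2^\perp$ with $v_2\in\hat{l}_2$, $v_2^\perp\in\hat{l}_2^\perp$; then $d(v,\hat{l}_2^\perp) = |v_2|$, and one checks that $v_2/|v_2|$ (after normalization) witnesses a comparable value of $\sup_{w\in\hat{l}_2\cap\Sb^{n-1}} d(w,\hat{l}_1)$ via the angle between $v$ and its projection to $\hat{l}_2$. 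The projection-operator route is cleaner, so I would use it.
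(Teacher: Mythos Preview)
Your proof is correct and takes a genuinely different, cleaner route than the paper. The paper does not invoke the projection characterization from Lemma \ref{equivalent definition of distance of subspaces} here; instead it argues via the spherical distance $d_{\Sb^{n-1}}$ on the unit sphere. For $v\in\hat{l}_1\cap\Sb^{n-1}$ one has $d(v,\hat{l}_2)=\cos d_{\Sb^{n-1}}(v,\hat{l}_2^\perp\cap\Sb^{n-1})$, and taking the supremum over such $v$ yields $d(\hat{l}_1,\hat{l}_2)=\cos d_{\Sb^{n-1}}(\hat{l}_1\cap\Sb^{n-1},\hat{l}_2^\perp\cap\Sb^{n-1})$, where the right-hand side is the cosine of the infimum-distance between the two spherical subsets; running the same computation starting from $\hat{l}_2^\perp$ shows that $d(\hat{l}_1^\perp,\hat{l}_2^\perp)$ equals the same quantity, and symmetry of the infimum-distance finishes the argument. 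Your route via $|(\id-P_1)-(\id-P_2)|_{op}=|P_1-P_2|_{op}$ is shorter and exploits Lemma \ref{equivalent definition of distance of subspaces} more directly; the paper's argument is more hands-on geometric but requires the extra bookkeeping of passing between a supremum of cosines and the cosine of an infimum. Your remark that the projection characterization applies equally well to $(n-k)$-dimensional subspaces is valid, since nothing in that lemma's proof depends on the specific dimension.
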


\pf

Let $d_{\Sb^{n-1}}$ be the spherical distance on the unit sphere $\Sb^{n-1}$. For any $v\in\hat{l}_1\cap \Sb^{n-1}$, $d(v,\hat{l}_2)+d(v,\hat{l}_2^\perp)^2=1$. Thus 
\begin{equation}
d(v,\hat{l}_2)=\cos d_{\Sb^{n-1}}(v,\hat{l}_2^\perp\cap \Sb^{n-1}).
\end{equation}
Taking the supremum for both sides, we get
\begin{equation}
d(\hat{l}_1,\hat{l}_2)=\cos d_{\Sb^{n-1}}(\hat{l}_1\cap \Sb^{n-1},\hat{l}_2^\perp\cap \Sb^{n-1}).
\end{equation}
Similarly, we know
\begin{equation}
\cos d_{\Sb^{n-1}}(\hat{l}_1\cap \Sb^{n-1},\hat{l}_2^\perp\cap \Sb^{n-1})=d(\hat{l}_1^\perp,\hat{l}_2^\perp).
\end{equation}
Now the conclusion is immediate.
\epf

\subsection{Regular Maps}

We'll discuss $\delta$-regular maps in the rest of this section. In this subsection, we'll give the definition of $\delta$-regular maps. And later we'll prove some properties for their level sets.

A map $f$ is $\delta$-regular if its differentiation is surjective and its Hessian is $\delta$ small modulo the linear transformation $L(\nabla f)$ rising from the QR decomposition of $\nabla f$.
\begin{dfn}

Let $U\subset \Rb^n$ be an open subset and $f:\Rb^n\to \Rb^k$ be a smooth map such that $\nabla f(x)$ is surjective for all $x\in U$. We call $f$ a $\delta$-regular map in $U$ if $||L(\nabla f)^{-1}\nabla^2 f||_{L^{\infty}(U)}\le \delta$.
    
\end{dfn}

In the above definition, the linear transformation $L(\nabla f)$ is taken pointwise. Since $f$ is regular, $L(\nabla f)$ is almost constant locally. Thus we may replace it by a fixed transformation $L_0$ in a small ball. The following is the main lemma of this section:

\begin{lem}\label{regular map equivalent definition}

Assume $B_r(x_0)\subset \Rb^n$ and $\delta<\epsilon<\epsilon(n)$ are sufficiently small. Let $f:\Rb^n\to \Rb^k$ be a smooth map in $B_{r}(x_0)$ with $rank(\nabla f(x_0))=k$. Take $L_0\in GL_k(\Rb^n)$ such that $|L(\nabla f(x_0))^{-1}L_0 - I_k|<\epsilon$. Then

\bnu
\item If $f$ is $\delta r^{-1}$-regular in $B_r(x_0)$, then $||L_0^{-1}\nabla^2 f||_{L^\infty(B_r(x_0))}\le (1+C(n)\epsilon)\delta r^{-1}$.
\item If $||L_0^{-1}\nabla^2 f||_{L^\infty(B_r(x_0))}\le \delta r^{-1}$, then $f$ is $(1+C(n)\delta)\delta r^{-1}$-regular in $B_r(x_0)$. 
\enu
\end{lem}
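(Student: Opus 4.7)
The underlying strategy for both directions is to show that $L(\nabla f(x))$ remains close to $L(\nabla f(x_0))$ (and hence to $L_0$) throughout $B_r(x_0)$, so that the two quantities $|L_0^{-1}\nabla^2 f|$ and $|L(\nabla f)^{-1}\nabla^2 f|$ differ only by a factor of $1+\mathrm{small}$. The key tool is Lemma \ref{smoothness of QR cor1}, which converts a bound on $|L(A)^{-1}(A-B)|$ into a bound on $|L(A)^{-1}L(B)-I_k|$, combined with the fundamental theorem of calculus along the segment from $x_0$ to $x$.

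For part 1, I would argue by a short bootstrap. Let $\eta>0$ be a constant to be fixed, and define $T:=\sup\{\rho\in[0,r] : |L(\nabla f(x_0))^{-1}(\nabla f(x)-\nabla f(x_0))|\le\eta \text{ for all } x\in B_\rho(x_0)\}$. By Lemma \ref{smoothness of QR cor1}, on $B_T(x_0)$ we have $|L(\nabla f(x_0))^{-1}L(\nabla f(x))-I_k|\le C(n)\eta$, so
\begin{equation}
|L(\nabla f(x_0))^{-1}\nabla^2 f(x)|\le(1+C(n)\eta)|L(\nabla f(x))^{-1}\nabla^2 f(x)|\le(1+C(n)\eta)\delta r^{-1}.
\end{equation}
Integrating along the segment from $x_0$ to $x\in B_T(x_0)$ then yields $|L(\nabla f(x_0))^{-1}(\nabla f(x)-\nabla f(x_0))|\le(1+C(n)\eta)\delta$; choosing, say, $\eta=2\delta$ makes the right side strictly less than $\eta$ for $\delta$ small, and continuity forces $T=r$. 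Composing the resulting estimate with $L_0^{-1}L(\nabla f(x_0))$, whose operator norm is at most $1+C\epsilon$ by hypothesis, gives $\|L_0^{-1}\nabla^2 f\|_{L^\infty(B_r(x_0))}\le(1+C(n)\epsilon)\delta r^{-1}$.

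For part 2 no bootstrap is needed, since the reference bound is already global. From $|L_0^{-1}\nabla^2 f|\le\delta r^{-1}$ and the fundamental theorem of calculus, $|L_0^{-1}(\nabla f(x)-\nabla f(x_0))|\le\delta$ on $B_r(x_0)$. Multiplying by $L(\nabla f(x_0))^{-1}L_0$ (operator norm $\le 1+\epsilon$) and applying Lemma \ref{smoothness of QR cor1} gives $|L(\nabla f(x_0))^{-1}L(\nabla f(x))-I_k|\le C(n)\delta$. Writing $L(\nabla f(x))^{-1}L_0 = L(\nabla f(x))^{-1}L(\nabla f(x_0))\cdot L(\nabla f(x_0))^{-1}L_0$ and combining these two near-identity estimates, one bounds $|L(\nabla f(x))^{-1}\nabla^2 f(x)|\le|L(\nabla f(x))^{-1}L_0|_{op}\cdot|L_0^{-1}\nabla^2 f(x)|\le(1+C(n)\delta)\delta r^{-1}$, absorbing the $\epsilon$-contribution into the constant using $\delta<\epsilon<\epsilon(n)$.

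The main obstacle is the bootstrap in part 1: the hypothesis controls $L(\nabla f(x))^{-1}\nabla^2 f(x)$ only pointwise, with the rescaling matrix changing from point to point, whereas to apply the fundamental theorem of calculus we need a \emph{fixed} rescaling. The bootstrap sidesteps this circularity by exploiting the smallness of $\delta$ to close the estimate; once this is done, both parts reduce to straightforward applications of Lemma \ref{smoothness of QR cor1} together with the submultiplicativity of the chosen norms.
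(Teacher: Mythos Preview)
Your proposal is correct. Part 2 is essentially identical to the paper's argument. For Part 1, your route differs from the paper's: the paper first shows, via Lemma \ref{bounding nabla L(f) changes by nabla f}, that $|L(\nabla f(x))^{-1}\nabla(L(\nabla f))(x)|\le C(n)\delta r^{-1}$, and then applies the matrix Gronwall estimate of Corollary \ref{MVT for matrix valued functions} to the matrix-valued function $x\mapsto L(\nabla f(x))$ to conclude $|L(\nabla f(x_0))^{-1}L(\nabla f(x))-I_k|\le C(n)\delta$. You instead bootstrap directly on $\nabla f$ and invoke only Lemma \ref{smoothness of QR cor1}. The paper's packaging is more modular (Lemma \ref{matrix derivative} and Corollary \ref{MVT for matrix valued functions} are reused later), while your direct bootstrap is a bit more self-contained; at the core both are the same continuity argument.

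One small slip: in the last line of Part 2, ``absorbing the $\epsilon$-contribution into the constant using $\delta<\epsilon$'' is the wrong direction---since $\epsilon>\delta$ you cannot bound a $C\epsilon$ term by $C'\delta$. The honest outcome of your computation (and of the paper's own proof) is $(1+C(n)\epsilon)\delta r^{-1}$; the $(1+C(n)\delta)$ in the lemma statement appears to be a typo.
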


The proof of Lemma \ref{regular map equivalent definition} is straightforward but we need a few technical steps.

The following technical lemmas control the growth of a matrix-valued function $f$ assuming that $||f^{-1}\nabla f||\le \delta$. We first consider the case on a segment.

\begin{lem}\label{matrix derivative}

Let $f:[0,1]\to GL_{k}(\Rb)$ be a smooth matrix-valued map. Given $\delta<1/2$, assume $|(f(t))^{-1}f'(t)|\le \delta$ for all $t\in [0,1]$. Then we have
\begin{equation}|f(0)^{-1}f(1)-I_k|\le 2\delta.\end{equation}

\end{lem}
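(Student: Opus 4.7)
The plan is to convert the multiplicative hypothesis into a standard linear ODE via the substitution $g(t) := f(0)^{-1} f(t)$. Then $g(0) = I_k$ and a direct computation gives
\begin{equation}
g'(t) = f(0)^{-1} f'(t) = g(t)\cdot\bigl(f(t)^{-1} f'(t)\bigr) = g(t) A(t),
\end{equation}
where $A(t) := f(t)^{-1} f'(t)$ satisfies $|A(t)| \le \delta$ by hypothesis. This reduces the statement to controlling the solution of the linear matrix ODE $g' = g A$ with initial condition $I_k$ and small coefficient $A$.

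Next I would integrate to obtain the Volterra-type equation $g(t) = I_k + \int_0^t g(s) A(s)\, ds$. Taking operator norms and using submultiplicativity yields
\begin{equation}
|g(t)|_{op} \le 1 + \delta \int_0^t |g(s)|_{op}\, ds,
\end{equation}
so Gronwall's inequality gives $|g(t)|_{op} \le e^{\delta t} \le e^{1/2}$ for $t \in [0,1]$. Plugging this bound back into the integral representation of $g(1) - I_k$ and using the mixed submultiplicative estimate $|BC| \le |B|_{op}\,|C|$, which holds for the Frobenius norm fixed in Definition \ref{norm}, one gets
\begin{equation}
|g(1) - I_k| \le \int_0^1 |g(s)|_{op}\, |A(s)|\, ds \le \delta \int_0^1 e^{\delta s}\, ds = e^\delta - 1.
\end{equation}

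The proof finishes by observing that $e^\delta - 1 \le 2\delta$ whenever $\delta \le \ln 2$, which certainly covers the hypothesis $\delta < 1/2$. I do not expect a serious obstacle here: the argument is a textbook Gronwall estimate applied to a linear matrix ODE. The one subtle point worth being careful about is the bookkeeping between operator and Frobenius norms, since naively writing $|I_k| = \sqrt{k}$ would spoil the dimension-free constant; the bound $|BC| \le |B|_{op} |C|$ is exactly what is needed to keep the final constant equal to $2$ rather than something depending on $k$.
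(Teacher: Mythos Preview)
Your proof is correct and follows essentially the same route as the paper: both set $g(t)=f(0)^{-1}f(t)$, rewrite $g(t)-I_k=\int_0^t g(s)A(s)\,ds$ with $|A(s)|\le\delta$, and control the integrand via the mixed operator--Frobenius inequality $|BC|\le |B|_{op}|C|$. The only cosmetic difference is that the paper closes the loop by a continuity (bootstrap) argument yielding $(1+2\delta)\delta<2\delta$, whereas you invoke Gronwall to get $e^\delta-1\le 2\delta$; these are interchangeable here.
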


\pf

This can be proved by continuity method. Let

\begin{equation}s=\inf\{t\in[0,1]\mid|f(0)^{-1}f(t)-I_k|>2\delta\}.\end{equation}

By continuity of $f$, $s>0$. It suffices to prove $s = 1$. If not, then $|f(0)^{-1}f(t)-I_k|=2\delta$. Note that

\begin{equation}f(s)=f(0)+\int_{0}^sf'(t)\drm t.\end{equation}
We have
\begin{align}\begin{split}
    |f(0)^{-1}f(s)-I_k|&=|\int_0^sf^{-1}(0)f(t)\cdot f(t)^{-1}f'(t)\drm t|\\
    &\le(1+2\delta)\int_0^s|f(t)^{-1}f'(t)|\drm t\\
    &\le(1+2\delta)\delta s\\
    &<2\delta.
\end{split}\end{align}

A contradiction! Hence $s=1$ and $|f(0)^{-1}f(1)-I_k|\le 2\delta.$

\epf

\begin{cor}\label{MVT for matrix valued functions}

Let $f:\Rb^n\to GL_{k}(\Rb)$ be a smooth map. Assume $|f(x)^{-1}\nabla f(x)|\le \lambda$ for all $x\in \Rb^n$. Then for $y\in B_{1/(2\lambda)}(x)$, 
\begin{equation}|f(x)^{-1}f(y)-I_k|\le 2\lambda |x-y|.\end{equation}

\end{cor}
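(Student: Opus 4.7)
The plan is to reduce the multivariate statement to the one-dimensional Lemma \ref{matrix derivative} by restricting $f$ to the line segment from $x$ to $y$. Since the hypothesis $|f(z)^{-1}\nabla f(z)|\le\lambda$ is stated pointwise and the conclusion is about the pair $(x,y)$, parametrizing this segment and tracking how the tensor norm interacts with inserting the direction vector $y-x$ should do all the work.

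Concretely, I would define $\gamma(t):=x+t(y-x)$ for $t\in[0,1]$ and set $g(t):=f(\gamma(t))$, which takes values in $GL_k(\Rb)$ since $f$ does. Then $g'(t)=\nabla f(\gamma(t))[y-x]$, so
\begin{equation}
g(t)^{-1}g'(t)=\bigl(f(\gamma(t))^{-1}\nabla f(\gamma(t))\bigr)[y-x].
\end{equation}
The chosen 2-norm from Definition \ref{norm} satisfies the usual Cauchy--Schwarz bound $|T[v]|\le |T|\,|v|$ when a derivative tensor $T$ is contracted against a vector $v$, so
\begin{equation}
|g(t)^{-1}g'(t)|\le |f(\gamma(t))^{-1}\nabla f(\gamma(t))|\cdot|y-x|\le \lambda|y-x|.
\end{equation}
Writing $\delta:=\lambda|y-x|$, the assumption $y\in B_{1/(2\lambda)}(x)$ gives $\delta<1/2$, which is exactly the hypothesis of Lemma \ref{matrix derivative}.

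Applying Lemma \ref{matrix derivative} to $g:[0,1]\to GL_k(\Rb)$ then yields
\begin{equation}
|f(x)^{-1}f(y)-I_k|=|g(0)^{-1}g(1)-I_k|\le 2\delta=2\lambda|y-x|,
\end{equation}
which is the claim. I do not expect any real obstacle: the only point that needs a sentence of justification is the inequality $|T[v]|\le|T|\,|v|$ for the chosen tensor norm, and the bookkeeping that $\lambda|y-x|<1/2$ so that the scalar lemma is in range. Otherwise this is a direct mean-value-style reduction to the one-dimensional case already handled.
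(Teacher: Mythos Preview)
Your proof is correct and is essentially identical to the paper's: both restrict $f$ to the constant-speed segment $\gamma(t)=x+t(y-x)$, set $g=f\circ\gamma$, bound $|g(t)^{-1}g'(t)|\le\lambda|x-y|$, and apply Lemma \ref{matrix derivative}. If anything, your version is slightly more detailed in justifying the tensor-norm inequality and the $\delta<1/2$ check.
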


\pf

Consider the constant-speed segment $\gamma:[0,1]\to\overline{xy}$ from $x$ to $y$. Define $g(t)=f(\gamma(t))$. We have $|g(t)^{-1}g'(t)|\le \lambda|x-y|$. Apply Lemma \ref{matrix derivative} to $g$ and the conclusion follows.

\epf

\begin{lem}\label{bounding nabla L(f) changes by nabla f}

Let $f:\Rb^n\to GL_k(\Rb^n)$ be a smooth map. Assume $rank(\nabla f(x))=k$ and 
$|L(f(x))^{-1}\nabla f(x)|\le \delta.$ Then
\begin{equation}|L(f(x))^{-1}\nabla(L(f))(x)|\le C(n)\delta.\end{equation}

\end{lem}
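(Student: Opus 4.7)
The plan is to reduce directly to the pointwise bound of Lemma \ref{smoothness of QR} by a chain rule computation and an algebraic insertion of $L(f(x))$. The key observation is that Lemma \ref{smoothness of QR} already gives the universal bound $|(L^{-1}\nabla L\circ \Lc_{L})(M)[B]|\le C(n)|B|$ on the space of full rank matrices, so all we have to do is recognize $\nabla(L\circ f)$ as an expression of this form composed with $\nabla f$.

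First I would write, by the chain rule, $\nabla(L\circ f)(x)[v]=\nabla L(f(x))[\nabla f(x)[v]]$ for each $v\in \Rb^n$. Then I insert $L(f(x))\cdot L(f(x))^{-1}$ in front of $\nabla f(x)[v]$ to obtain
\begin{equation}
L(f(x))^{-1}\nabla(L\circ f)(x)[v]=\bigl(L^{-1}\nabla L\circ \Lc_{L}\bigr)(f(x))\bigl[L(f(x))^{-1}\nabla f(x)[v]\bigr].
\end{equation}
Now apply Lemma \ref{smoothness of QR} to the outer factor and the hypothesis $|L(f(x))^{-1}\nabla f(x)|\le \delta$ to the inner tensor argument. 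Taking the supremum over unit $v\in \Rb^n$ converts these pointwise-in-direction estimates into the required bound on the tensor norm of Definition \ref{norm}, giving $|L(f(x))^{-1}\nabla(L(f))(x)|\le C(n)\delta$.

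There is essentially no obstacle here beyond bookkeeping: one must check that the definition of $\Lc_L$ and the evaluation of $\nabla L$ on a matrix-valued input are compatible with plugging in $\nabla f(x)[v]$ direction by direction, and that the tensor norm defined in Definition \ref{norm} is indeed attained by supping over unit $v$ so that the pointwise estimate upgrades to the tensor estimate. Both facts are immediate from the definitions. The constant $C(n)$ is exactly the constant produced by Lemma \ref{smoothness of QR}, with no additional dependence needed.
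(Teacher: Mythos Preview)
Your proof is correct and is essentially identical to the paper's: chain rule, insert $L(f(x))L(f(x))^{-1}$, and apply Lemma \ref{smoothness of QR} to the outer factor together with the hypothesis on the inner one. One small inaccuracy: the tensor $2$-norm of Definition \ref{norm} is a Frobenius-type norm, not the supremum over unit directions, so it is not literally ``attained by supping over unit $v$''; however the two differ only by a dimensional factor, so your conclusion with a $C(n)$ constant is unaffected.
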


\pf
This estimate is purely pointwise. By Lemma \ref{smoothness of QR},
\begin{align}\begin{split}
    |L(f(x))^{-1}\nabla(L(f))(x)|&=|L(f(x))^{-1}(\nabla L)(f(x))\nabla f(x)|\\
    &=|(L(f(x))^{-1}(\nabla L)(f(x))\circ \Lc_{L(f(x))})\circ(L(f(x))^{-1}\nabla f(x))|\\
    &\le C(n)|L(f(x))^{-1}\nabla f(x)|\\
    &\le C(n)\delta.
\end{split}\end{align}

\epf

\pf[Proof of Lemma \ref{regular map equivalent definition}]
\bnu

\item Assume $f$ is $\delta r^{-1}$-regular in $B_r(x_0)$. By Lemma \ref{bounding nabla L(f) changes by nabla f},  we have 
\begin{equation}
    |L(\nabla f(x))^{-1}\nabla(L(\nabla f))(x)|\le C(n)\delta r^{-1}.
\end{equation}
By Corollary \ref{MVT for matrix valued functions}, if $\delta <\epsilon(n)$ is sufficiently small, then for any $x\in B_{r}(x_0)$ we have
\begin{equation}\label{L(nabla f) is almost locally constant}
    |L(\nabla f(x_0))^{-1}L(\nabla f(x))-I_k|\le C(n)\delta.
\end{equation}

Note that $|L(\nabla f(x_0))^{-1}L_0 - I_k|<\epsilon$, we have
\begin{equation}
    |L_0^{-1}L(\nabla f(x))-I_k|< C(n)\epsilon.
\end{equation}
Thus by the $\delta$-regularity of $f$, we have
\begin{equation}
    ||L_0^{-1}\nabla^2f||_{L^\infty(B_r(x_0))}\le (1+C(n)\epsilon)\delta r^{-1}.
\end{equation}

\item The proof of this part is simpler. The conditions implies
\begin{equation}
    ||L(\nabla f(x_0))\nabla^2 f||_{L^\infty(B_r(x_0))}\le (1+C(n)\epsilon)\delta r^{-1}.
\end{equation}

Taking integral, we get
\begin{equation}
    |L(\nabla f(x_0))^{-1}(\nabla f(x)-\nabla f(x_0))|\le (1+C(n)\epsilon)\delta .
\end{equation}

Thus if $\delta<\delta(n)$, by Lemma \ref{smoothness of QR cor1}, we have
\begin{equation}
    |L(\nabla f(x_0))^{-1}L(\nabla f(x))-I_k|\le C(n)\delta.
\end{equation}
and $|L_0^{-1}L(\nabla f(x)) - I_k|\le C(n)\epsilon$.

Thus the condition implies $||L^{-1}\nabla^2f||_{L^\infty(B_r(x_0))}\le (1+C(n)\epsilon)\delta r^{-1}$, i.e., $f$ is $(1+C(n)\epsilon)\delta r^{-1}$-regular in $B_r(x_0)$.

\enu

\epf

\begin{rem}

Assume either case of Lemma \ref{regular map equivalent definition}. From the proof we see that 
\begin{equation}
    ||L_0^{-1}L(\nabla f)-I_k||_{L^\infty(B_r(x_0))}\le C(n)\epsilon.
\end{equation}
    
\end{rem}

\subsection{Graphical Submanifolds}

In this subsection, we'll introduce $(\delta,r)$-submanifolds and prove that level sets of regular maps are graphical submanifolds.

We call a $k$-dimensional submanifold of $\Rb^n$ graphical if it is locally the graph of some $C^2$ map that is almost constant.

\begin{dfn}[$(\delta,r)$-graphical submanifold]\label{graphical submanifold}

Let $S$ be a subset of $\Rb^n$. We say $S$ is a $k$-dimensional $(\delta,r)$-graphical if for any $x\in S$, there exists a $k$-plane $l_x\in \Rb^n$, called graphing plane, and a smooth graphing function
\begin{equation}
    g_x:l_x\to l_x^\perp\text{ with }||g_x||\le \delta r, ||\nabla g_x||\le\delta, ||\nabla^2g_x||\le \delta r^{-1},
\end{equation}
such that 
\begin{equation}
    S\cap B_r(x)=Graph(g_x)\cap B_r(x).
\end{equation}
\end{dfn}

\begin{rem}

It is clear that $k$-dimensional $(\delta,r)$-graphical submanifolds are smooth embedded $k$-dimensional submanifolds without boundary.
    
\end{rem}

For a $(\delta,r)$-graphical submanifold $S$, we always use $l_x$ to denote a graphing plane at $x$. Let $P_{l_x}:\Rb^n\to l_x$ be the orthogonal projection to $l_x$ and $\hat{P}_{l_x}=\nabla P_{l_x}$ be the orthogonal projection to the respective linear subspace.

The following is the main result to prove in this subsection.

\begin{prop}\label{level sets of delta_x regular maps}
For $\delta<\delta(n)$, $r>0$, assume $f:\Rb^n\to \Rb^k$ is a $\delta r^{-1}$-regular map. Then for any $c\in \Rb^k$, $f^{-1}(c)$ is a $(C(n)\delta,r)$-graphical submanifold, with graphing plane $l_x=x+\ker\nabla f(x)$ at $x$.
    
\end{prop}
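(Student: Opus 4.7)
The plan is to reduce $f$ to a normal form by translating, rotating, and left-multiplying by a fixed invertible matrix, and then to solve $f(y,z) = c$ for $y$ in terms of $z$ via Banach contraction.

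First I would fix $x_0 \in f^{-1}(c)$ and translate so that $x_0 = 0^n$ and $c = 0^k$. By Lemma \ref{invariance of norm} all relevant norms are orthogonally invariant, so I can rotate $\Rb^n$ to assume $\ker \nabla f(0^n) = \{0^k\} \times \Rb^{n-k}$; writing points as $(y,z) \in \Rb^k \times \Rb^{n-k}$, the graphing plane becomes $l_{0^n} = \{0^k\} \times \Rb^{n-k}$. With $L_0 := L(\nabla f(0^n))$, the QR decomposition forces $\nabla f(0^n) = [\,L_0 \mid 0\,]$. I would then work with the renormalized map $H := L_0^{-1} f$, whose derivative at the origin is $[\,I_k \mid 0\,]$ and whose Hessian, by Lemma \ref{regular map equivalent definition}(1) applied on a slightly enlarged ball, satisfies $\|\nabla^2 H\|_{L^\infty(B_{2r}(0^n))} \le C(n)\delta r^{-1}$.

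Next, for each $z$ with $|z| \le r$, I would solve $H(y,z) = 0$ by iterating the map $T_z(y) := y - H(y,z)$. Integrating the Hessian bound along segments out of the origin gives $\partial_y H = I_k + O(\delta)$ and $\partial_z H = O(\delta)$ on $\overline{B_r(0^k)} \times \overline{B_r(0^{n-k})}$, so $|\nabla T_z| = O(\delta)$; combined with $\partial_z H(0^n) = 0$, Taylor expansion yields $|H(0^k,z)| \le C(n)\delta r^{-1}|z|^2 \le C(n)\delta r$. For $\delta$ sufficiently small, $T_z$ is then a $\tfrac{1}{2}$-contraction of $\overline{B_r(0^k)}$ into itself, whose unique fixed point $g_{0^n}(z)$ satisfies $|g_{0^n}(z)| \le 2|H(0^k,z)| \le C(n)\delta r^{-1}|z|^2 \le C(n)\delta r$. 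Differentiating the identity $H(g_{0^n}(z),z) \equiv 0$ yields $\nabla g_{0^n} = -(\partial_y H)^{-1}\partial_z H = O(\delta)$, and differentiating once more yields $\nabla^2 g_{0^n} = O(\delta r^{-1})$, exactly the three bounds required by Definition \ref{graphical submanifold}. To close, any $(y,z) \in f^{-1}(0^k) \cap B_r(0^n)$ has $|y|, |z| \le r$ and solves $H(y,z) = 0$, so uniqueness of the fixed point of $T_z$ forces $y = g_{0^n}(z)$; the reverse inclusion is immediate.

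I expect the only substantive step to be Lemma \ref{regular map equivalent definition}, which is what lets one pass from the intrinsic pointwise bound $|L(\nabla f)^{-1}\nabla^2 f| \le \delta r^{-1}$ to a uniform bound against the fixed matrix $L_0$; once this is done, the remaining implicit function estimates are routine Taylor expansions. A cosmetic issue is that the contraction produces $g_{0^n}$ only on $l_{0^n} \cap \overline{B_r(0^n)}$ rather than on all of $l_{0^n}$ as in Definition \ref{graphical submanifold}; extending by a smooth cutoff preserves the $C^2$ bounds up to a universal constant and does not alter the graph inside $B_r(0^n)$.
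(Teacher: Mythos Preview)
Your proposal is correct and follows essentially the same route as the paper: normalize by translation, rotation, and left-multiplication by $L(\nabla f(x_0))^{-1}$, invoke Lemma \ref{regular map equivalent definition} to pass from the pointwise regularity bound to a uniform Hessian bound against the fixed matrix $L_0$, and then run a quantitative implicit function argument. The paper packages the last step as Lemma \ref{local level set} (which in turn cites Proposition \ref{quantitative implicit function theorem}) and handles the extension of $g_{x_0}$ to all of $l_{x_0}$ via Remark \ref{continuous extension}, whereas you unpack the implicit function theorem directly as a Banach contraction; these are cosmetic differences only.
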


Before we prove Proposition \ref{level sets of delta_x regular maps}, let's first state the following effective Inverse and Implicit Function Theorems. The rest of this section are essentially the applications of them.

\begin{prop}[Quantitative Inverse Function Theorem]\label{quantitative inverse function theorem}
    Let $f:B_1(0^n)\to \Rb^n$ be a smooth map with $|f(0^n)|,|\nabla f(0^n)-I_n|$, $||\nabla^2f||_{L^\infty(B_1(0^n))}\le \delta$ for some $\delta<\delta(n)$. Then $f$ is a diffeomorphism from $B_1(0^n)$ onto its image, with $B_{1-C'(n)\delta}(0^n)\subset f(B_1(0^n))$. Moreover, we have $||f^{-1}-\id||, ||\nabla f^{-1} - I_n||, ||\nabla^2f^{-1}||\le C(n)\delta$ in $B_{1-C'(n)\delta}(0^n)$.
\end{prop}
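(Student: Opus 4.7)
The plan is to prove the three conclusions (injectivity, surjectivity onto a slightly smaller ball, and the three $C^2$-closeness bounds for $f^{-1}$) in that order, using only elementary calculus plus one Banach fixed-point argument. The core observation is that the Hessian bound lets us propagate the bound $|\nabla f(0^n)-I_n|\le\delta$ to a global bound $\|\nabla f-I_n\|_{L^\infty(B_1(0^n))}\le 2\delta$, after which $f$ is a $2\delta$-small perturbation of the identity in the $C^1$-sense.

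First I would establish the quantitative bi-Lipschitz estimate: for any $x,x'\in B_1(0^n)$, write
\begin{equation}
f(x')-f(x)=(x'-x)+\int_{0}^{1}\bigl(\nabla f(x+t(x'-x))-I_n\bigr)(x'-x)\,\drm t,
\end{equation}
so $|f(x')-f(x)-(x'-x)|\le 2\delta|x'-x|$, giving both injectivity and the two-sided estimate $(1-2\delta)|x'-x|\le|f(x')-f(x)|\le(1+2\delta)|x'-x|$. For surjectivity onto $B_{1-C'(n)\delta}(0^n)$, I would fix $y$ in that smaller ball and run the Banach contraction for $T_y(x):=x-f(x)+y$. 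By the $C^1$-bound $\|\nabla T_y\|\le 2\delta<1/2$, so $T_y$ is a $2\delta$-contraction; moreover $|T_y(0^n)|\le|y|+|f(0^n)|\le 1-C'(n)\delta+\delta$, so choosing $C'(n)$ large enough (comfortably bigger than $3$) one checks that $T_y$ maps the closed ball $\overline{B}_{1-\delta}(0^n)$ into itself. The unique fixed point lies in the interior $B_1(0^n)$ and is the desired preimage.

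Once bijectivity onto $f(B_1(0^n))\supset B_{1-C'(n)\delta}(0^n)$ is in place, the three estimates for $f^{-1}$ follow from the chain rule and the implicit differentiation formulas. Since $\|\nabla f-I_n\|\le 2\delta$, a Neumann-series computation gives $\|(\nabla f)^{-1}-I_n\|\le C(n)\delta$ pointwise, so $\|\nabla f^{-1}-I_n\|_{L^\infty}\le C(n)\delta$. Differentiating $\nabla f(f^{-1}(y))\cdot\nabla f^{-1}(y)=I_n$ yields
\begin{equation}
\nabla^2 f^{-1}(y)=-\bigl(\nabla f(f^{-1}(y))\bigr)^{-1}\bigl(\nabla^2 f(f^{-1}(y))[\nabla f^{-1}(y)\,\cdot\,,\,\nabla f^{-1}(y)\,\cdot\,]\bigr),
\end{equation}
so the Hessian bound on $f$ and the $C^0$-closeness of $\nabla f^{-1}$ to $I_n$ give $\|\nabla^2 f^{-1}\|_{L^\infty}\le C(n)\delta$. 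Finally, $\|f^{-1}-\id\|$ is controlled by writing $f^{-1}(y)-y=-(f(f^{-1}(y))-f^{-1}(y))$ and applying $|f(x)-x|\le|f(0^n)|+\|\nabla f-I_n\|\cdot|x|\le\delta(1+2|x|)$ at $x=f^{-1}(y)$.

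\textbf{Main obstacle.} The only delicate step is calibrating the constant $C'(n)$ in $B_{1-C'(n)\delta}(0^n)$ so that the contraction $T_y$ actually has its fixed point strictly inside the open ball $B_1(0^n)$ (not just in the closed ball where we argue invariance), since the hypotheses on $f$ are posed only on $B_1(0^n)$; the remaining computations are routine perturbation estimates whose constants depend only on $n$ through dimensional norm equivalences.
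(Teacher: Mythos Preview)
Your proposal is correct and essentially matches the paper in overall structure: propagate the Hessian bound to a global $C^1$ estimate $\|\nabla f - I_n\|\le C\delta$, deduce injectivity from the bi-Lipschitz estimate, and read off the $C^0$, $C^1$, $C^2$ bounds on $f^{-1}$ from the chain rule and the inverse function theorem. The paper is in fact terser than you are on the $f^{-1}$ estimates, simply invoking the classical inverse function theorem and the formula $\nabla(f^{-1})=(\nabla f\circ f^{-1})^{-1}$; your explicit Neumann series and second-derivative formula fill this in correctly.

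The one genuine difference is in the surjectivity step. You run a Banach contraction on $T_y(x)=x-f(x)+y$ and check that it preserves a closed sub-ball $\overline{B}_{1-\delta}(0^n)$; the paper instead minimizes $g(x)=|f(x)-y|$ over $\overline{B}_1(0^n)$, observes that for $|y|<1-C'(n)\delta$ the minimum must be interior (since $|f(x)-x|\le C(n)\delta$ forces $g\ge 1-C(n)\delta-|y|>0$ on the boundary), and concludes $f(x_0)=y$ from invertibility of $\nabla f(x_0)$ at a critical point. Both arguments are standard; yours is slightly more constructive (it gives an iterative scheme for the preimage), while the paper's avoids any bookkeeping about which closed sub-ball is preserved. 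The obstacle you flag is not actually an issue: since $T_y$ maps $\overline{B}_{1-\delta}(0^n)$ into itself, the fixed point automatically lies in $\overline{B}_{1-\delta}(0^n)\subset B_1(0^n)$, so there is no danger of landing on the boundary where $f$ is undefined.
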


\pf

By the conditions, we know $||\nabla f-I_n||\le C(n)\delta$ in $B_1(0^n)$. Thus for any $x,y\in B_1(0^n)$, $|f(x)-x|\le C(n)\delta$, and $|f(y)-f(x)|\ge (1-C(n)\delta)|x-y|$. Thus $f$ is injective in $B_1(0^n)$. The classical inverse function theorem implies that $f$ is a diffeomorphism onto its image with $\nabla (f^{-1})(x)=(\nabla f(f^{-1}(x)))^{-1}$. Thus all the estimates for $f^{-1}$ holds.

It suffices to show that $B_{1-C'(n)\delta}(0^n)\subset f(B_1(0^n))$ for some $C'(n)$. For $y\in B_{1-C'(n)\delta}(0^n)$, define
\begin{equation}
    g:B_1(0^n)\to \Rb^n, x\mapsto |f(x) - y|.
\end{equation}
For $C'(n)$ sufficiently large, $g$ attains minimum at some interior point $x_0\in B_1(0^n)$. Since $|\nabla f(x_0) -I_n|\le C(n)\delta$, we must have $f(x_0)=y$.
\epf

\begin{prop}[Quantitative Implicit Function Theorem]\label{quantitative implicit function theorem}

Let $f:B_1(0^{n-k})\times B_1(0^k)\to \Rb^k$ be a smooth function. Assume $|f(0^{n-k},0^k)|\le \delta$, $|\nabla f(0^{n-k},0^{k})-(0,I_k)|\le \delta$, $||\nabla^2f||\le \delta$, where $\delta <\delta(n)$ is sufficiently small. Then for any $x\in B_1(0^{n-k})$ there exists a unique $y_x\in B_1(\Rb^k)$ such that $f(x,y_x)=0$. And the map $g:B_1(0^{n-k})\to \Rb^k$, $x\mapsto y_x$ is smooth with $||g||$, $||\nabla g||$, $||\nabla^2 g||\le C(n)\delta$.
    
\end{prop}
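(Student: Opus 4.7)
The plan is to reduce this to the Quantitative Inverse Function Theorem (Proposition \ref{quantitative inverse function theorem}) applied to the ``graph-completion'' map $F: B_1(0^n) \to \Rb^n$ defined by $F(x,y) := (x, f(x,y))$, where $(x,y)\in B_1(0^{n-k})\times B_1(0^k)$. The point of this trick is that solving $f(x,y)=0$ for $y$ in terms of $x$ becomes the same as inverting $F$ at the point $(x,0)$.

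First I would verify the hypotheses of Proposition \ref{quantitative inverse function theorem} for $F$. At the origin, $F(0^{n-k},0^k) = (0^{n-k}, f(0^{n-k},0^k))$, so $|F(0^n)| \le \delta$. The differential $\nabla F(0^n)$ is block lower-triangular, with $I_{n-k}$ on the $x$-block and $\partial_y f(0^n)$ on the $y$-block, so $|\nabla F(0^n) - I_n| = |\nabla f(0^n) - (0, I_k)| \le \delta$. Finally, $\nabla^2 F$ has all $x$-entries equal to zero and all $y$-entries given by $\nabla^2 f$, so $\|\nabla^2 F\|_{L^\infty} \le \delta$. Hence for $\delta < \delta(n)$ sufficiently small, Proposition \ref{quantitative inverse function theorem} applies and gives that $F$ is a diffeomorphism from $B_1(0^n)$ onto its image, with $B_{1-C'(n)\delta}(0^n) \subset F(B_1(0^n))$ and $\|F^{-1} - \id\|$, $\|\nabla F^{-1} - I_n\|$, $\|\nabla^2 F^{-1}\| \le C(n)\delta$ on $B_{1-C'(n)\delta}(0^n)$.

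Then for any $x \in B_1(0^{n-k})$, the point $(x,0^k)$ lies in $B_{1-C'(n)\delta}(0^n)$ (after the usual mild enlargement of the initial domain of $f$ to absorb the $C'(n)\delta$ shrinkage), so there is a unique $(x,y_x) \in B_1(0^n)$ with $F(x,y_x) = (x,0^k)$, equivalently $f(x,y_x) = 0$. Setting $g(x) := y_x$ — i.e. $g = \pi_y \circ F^{-1} \circ \iota_0$ where $\iota_0(x) = (x,0^k)$ and $\pi_y$ projects onto the last $k$ coordinates — the bounds $\|g\|$, $\|\nabla g\|$, $\|\nabla^2 g\| \le C(n)\delta$ fall out immediately from the corresponding bounds on $F^{-1} - \id$ by the chain rule, since $\iota_0$ and $\pi_y$ are isometric inclusions/projections and $\pi_y(\iota_0(x)) = 0$.

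There is no substantive obstacle in this argument; the only cosmetic issue is the mismatch between the $B_1$ in the conclusion of Proposition \ref{quantitative implicit function theorem} and the $B_{1-C'(n)\delta}$ that pops out of Proposition \ref{quantitative inverse function theorem}. This is handled by applying the inverse function theorem on a slightly larger domain (e.g.\ $B_{1+C'(n)\delta}$, which is covered by the original hypotheses up to harmless constants), so that after the $C'(n)\delta$ shrinkage one still recovers existence and uniqueness of $y_x$ for \emph{every} $x \in B_1(0^{n-k})$. The key computational step is just the block-structure identification of $\nabla F(0^n)$ as approximately $I_n$, which is precisely why the hypothesis was stated as $|\nabla f(0^n) - (0,I_k)| \le \delta$ rather than some more general non-degeneracy condition.
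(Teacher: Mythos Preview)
Your proposal is correct and takes a genuinely different route from the paper. The paper applies the Quantitative Inverse Function Theorem \emph{slice-by-slice}: for each fixed $x\in B_1(0^{n-k})$ it inverts the map $y\mapsto f(x,y)$ on $B_1(0^k)$ to obtain existence and uniqueness of $y_x$ with $|y_x|\le C(n)\delta$, and then simply invokes the classical implicit function theorem (plus the explicit formula $\nabla g = -(\partial_y f)^{-1}\partial_x f$) for the smoothness and the quantitative $C^1$, $C^2$ bounds on $g$. Your graph-completion approach $F(x,y)=(x,f(x,y))$ instead applies the Quantitative Inverse Function Theorem once on the full $n$-dimensional domain and reads off everything, including the $C^2$ bound on $g$, directly from the bounds on $F^{-1}-\id$. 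Your method is the textbook reduction of implicit-to-inverse and has the advantage of delivering all the estimates in one stroke; the paper's slice-by-slice method keeps the inversion step $k$-dimensional and gives uniqueness of $y_x$ in the full product ball $B_1(0^k)$ without any domain-shrinkage bookkeeping, at the cost of a slightly hand-wavy appeal to the classical theorem for the derivative bounds.
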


\pf

The conditions implies that $|f(x,0^k)|\le C\delta$ for any $x\in B_1(0^{n-k})$. Apply Proposition \ref{quantitative inverse function theorem} to each $\{x\}\times \Rb^k$. We know there is a unique $y_x\in B_{1}(0^k)$ such that $f(x,y_x)=0$, and we have $|y_x|\le C(n)\delta$. The smoothness and controls of $y_x$ follows from the classical implicit function theorem.
\epf

\begin{rem}
    There are $C^1$ versions of the quantitative inverse and implicit function theorems: If we only assume $|f(0^n)|$, $||\nabla f-I_n||\le \delta$ and $|f(0^{n-k},0^k)|$, $||\nabla f-(0,I_k)||\le \delta$ respectively, the conclusions are still true, except that we won't get the $C^2$ estimates.
\end{rem}

Let's go back to Definition \ref{graphical submanifold}. In the definition, the exact domain of $g_x$ doesn't matter so much, since only the part $Graph(g_x)\cap B_r(x)$ matters. The following remark shows that it suffices to construct the $g_x$ for this part.

\begin{rem}\label{continuous extension}
Assume we have some $g_x:\overline{U}_x\to l_x^\perp$ for some $U_x\subset l_x$, with $Graph(g_x|_{\partial U_x})\subset \partial B_r(x)$, $Graph(g_x)\subset \overline{B}_r(x)$. Assume $||g_x||\le \delta r, ||\nabla g_x||\le\delta, ||\nabla^2g_x||\le \delta r^{-1}$. Then $U_x$ has a smooth boundary and $g_x$ may be extended from $\overline{U}_x$ to $l_x$ with 
\begin{equation}
    ||g_x||\le C(n)\delta r, ||\nabla g_x||\le C(n)\delta, ||\nabla^2g_x||\le C(n)\delta r^{-1}.
\end{equation}
\end{rem}

The graphing plane $l_x$ is almost unique. It can also be perturbed a bit or directly taken as $T_xS$ with some cost in $\delta$.

\begin{lem}\label{disturbing the graphing plane}
Let $S\subset\Rb^n$ be a $k$-dimensional $(\delta,r)$-graphical submanifold.
\bnu
\item For each $x\in S$, let $l_x$ be a graphing plane. Then $d_H|_{B_{3r}(x)}(T_xS,l_x)\le C(n)\delta r$.
\item For each $x\in S$, for any $k$-plane $l_x'$ with $d_H|_{B_{r}(x)}(l_x,l'_x)\le \delta r$, there exists $g'_x:l'_x\to l'^\perp_x$ with $||g'_x||\le C(n)\delta r, ||\nabla g'_x||\le C(n)\delta, ||\nabla^2 g'_x||\le C(n)\delta r^{-1}$ such that $S\cap B_r(x)=Graph(g'_x)\cap B_r(x)$.
\enu
\end{lem}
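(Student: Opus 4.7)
For part (1), the tangent plane can be read directly from the graph. Since $x \in S \cap B_r(x) = Graph(g_x) \cap B_r(x)$, I write $x = y_0 + g_x(y_0)$ with $y_0 := P_{l_x}(x) \in l_x$; differentiating the parametrization $y \mapsto y + g_x(y)$ of the graph gives $\hat{T}_x S = \{v + \nabla g_x(y_0)[v] : v \in \hat{l}_x\}$, so $d(\hat{T}_x S, \hat{l}_x) \le |\nabla g_x(y_0)| \le \delta$ by Lemma \ref{equivalent definition of distance of subspaces}, while $dist(x, l_x) = |g_x(y_0)| \le \delta r$. For $p = x + v + \nabla g_x(y_0)[v] \in T_x S \cap B_{3r}(x)$ with $v \in \hat{l}_x$, $|v| \le 3r$, the point $y_0 + v \in l_x$ satisfies $|p - (y_0 + v)| \le |g_x(y_0)| + |\nabla g_x(y_0)[v]| \le 4\delta r$; the symmetric estimate for $p' = y_0 + w \in l_x \cap B_{3r}(x)$ compared against $x + w + \nabla g_x(y_0)[w] \in T_xS$ gives the matching bound, so $d_H|_{B_{3r}(x)}(T_xS, l_x) \le C(n)\delta r$.

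For part (2), the plan is to reparametrize $Graph(g_x) \cap B_r(x)$ as a graph over $l_x'$ via the Quantitative Inverse Function Theorem (Proposition \ref{quantitative inverse function theorem}). First I establish $d(\hat{l}_x, \hat{l}_x') \le C(n)\delta$: since $x$ lies within $\delta r$ of both $l_x$ and $l_x'$, for any $v \in \hat{l}_x \cap \Sb^{n-1}$ the points $P_{l_x}(x) \pm (r/2)v$ lie in $l_x \cap B_r(x)$ and each admits a companion in $l_x'$ within $\delta r$, producing a unit vector in $\hat{l}_x'$ within $O(\delta)$ of $v$. Then define $\Psi : l_x \to l_x'$ by $\Psi(y) := P_{l_x'}(y + g_x(y))$; its derivative is $\hat{P}_{\hat{l}_x'} \circ (\id_{\hat{l}_x} + \nabla g_x(y))$, which after the natural orthogonal identification $\hat{l}_x \leftrightarrow \hat{l}_x'$ differs from the identity by $O(\delta)$, and $||\nabla^2 \Psi|| \le C(n)\delta r^{-1}$. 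Proposition \ref{quantitative inverse function theorem}, applied after rescaling by $r$, yields a $C^2$ inverse $\Psi^{-1}$ on a slightly smaller ball in $l_x'$ with the corresponding $C^2$ bounds.

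The graphing function over $l_x'$ is then $g_x'(y') := \hat{P}_{(\hat{l}_x')^\perp}\bigl(\Psi^{-1}(y') + g_x(\Psi^{-1}(y')) - y'\bigr)$; the chain rule combined with the bounds on $\Psi^{-1}$ and $g_x$ gives $||g_x'|| \le C(n)\delta r$, $||\nabla g_x'|| \le C(n)\delta$, and $||\nabla^2 g_x'|| \le C(n)\delta r^{-1}$. By construction $Graph(g_x') \cap B_r(x) = Graph(g_x) \cap B_r(x) = S \cap B_r(x)$ over the image of $\Psi$, and Remark \ref{continuous extension} extends $g_x'$ to all of $l_x'$ while preserving these bounds. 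The main obstacle is bookkeeping across the three $k$-planes $l_x, l_x', T_xS$ and their orthogonal complements, plus ensuring the QIFT-produced domain of $\Psi^{-1}$ is large enough to cover $S \cap B_r(x)$; this is handled by invoking QIFT on a slightly enlarged ambient ball before restricting.
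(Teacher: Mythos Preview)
Your argument for part (1) is essentially identical to the paper's: both express $T_xS$ as the graph of the affine map $y' \mapsto g_x(y_0) + \nabla g_x(y_0)[y'-y_0]$ over $l_x$ and read off the Hausdorff bound from $||g_x||\le\delta r$, $||\nabla g_x||\le\delta$.

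For part (2) your approach is correct but genuinely different from the paper's. You project graph points onto $l_x'$ via $\Psi(y)=P_{l_x'}(y+g_x(y))$, invert $\Psi$ with the Quantitative Inverse Function Theorem, and then build $g_x'$ explicitly by composition. The paper instead works with the single scalar defining function $f(y,z)=z-g_x(y)$ on $l_x\times l_x^\perp$, observes that after rewriting the domain as $l_x'\times l_x'^\perp$ one still has $||\nabla f-(0,I_k)||\le C(n)\delta$ (because the change of orthogonal frame is $C(n)\delta$-close to the identity), and applies the Quantitative Implicit Function Theorem (Proposition~\ref{quantitative implicit function theorem}) directly to produce $g_x'$ in one step. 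The paper's route avoids the separate inversion of $\Psi$ and the subsequent chain-rule bookkeeping you flag as the ``main obstacle''; your route is more hands-on but requires tracking the three planes and the domain of $\Psi^{-1}$ explicitly. Both rely on the same underlying estimate $d(\hat l_x,\hat l_x')\le C(n)\delta$, which you derive and the paper uses implicitly when changing coordinates.
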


\pf

\bnu

\item Let $g_x$ be the respective graphing function. Assume $x =(y, g_x(y))$. Let $\tilde{g}(y')=\nabla g_x(y'-y)+g_x(y)$. Then $T_xS = Graph(\tilde{g})$. Note that $||g_x||\le \delta, ||\nabla g_x||\le \delta r$. We have $|\tilde{g}|\le C(n)\delta r$ in $B_{3r}(x)$. Thus $d_H|_{B_{3r}(x)}(T_xS,l_x)\le C(n)\delta r$.

\item Let $l_x$, $g_x$ be the graphing plane and graphing function. For $w = (y,z)\in l_x\times l_x^\perp$, define $f(w) = z-g_x(y)$. Then $\nabla f = (-\nabla g_x , I_k)$, $\nabla^2f = (-\nabla^2g_x,0)$. Now we view $f$ as a function $l'_x\times l'^\perp_x \to l^\perp_x$. Since $d_H|_{B_{r}(x)}(l_x,l'_x)\le \delta r$, in $B_{3r}(x)$, we still have
\begin{equation}
    ||f||\le \delta r,\; ||\nabla f-(0, I_k)||\le C(n)\delta, \;||\nabla^2 f||\le C(n) \delta r^{-1}.
\end{equation}
By Proposition \ref{quantitative implicit function theorem}, there exists $g'_x$ such that $f(y',g'_x(y')) = 0$ for all $y' \in B_{2r}(x)\cap l'_{x}$, with $||g'_x||\le C(n)\delta r, ||\nabla g'_x||\le C(n)\delta, ||\nabla^2 g'_x||\le C(n)\delta r^{-1}$ in $B_{2r}(x)\cap l'_x$. 

It is clear that $S\cap B_r(x)=Graph(g'_x)\cap B_r(x)$. We may modify $g'_x$ outside $B_{r}(x)$ to make the estimates holds on the whole $l'_x$.
\enu

\epf

A corollary of Lemma \ref{disturbing the graphing plane} shows that the graphing plane moves slowly on $S$.
\begin{cor}

Let $S\subset\Rb^n$ be a $k$-dimensional $(\delta,r)$-graphical submanifold. For $x,y \in S$ with $d(x,y)<r$, we have $d(\hat{l}_x,\hat{l}_y)\le C(n)\delta$ and $d_H|_{B_r(x)}(l_x,l_y)\le C(n)\delta r$.
    
\end{cor}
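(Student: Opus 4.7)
The plan is to exploit the fact that $d(x,y)<r$ puts $y$ inside $B_r(x)$, so $y$ lies on $Graph(g_x)$ and not merely on $S$; this gives a common point at which to compare the two graphing planes.

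First I would prove the direction estimate $d(\hat{l}_x,\hat{l}_y)\le C(n)\delta$. At the point $y$, I would compute the linear part $\widehat{T_yS}$ of the tangent space two ways. Viewed as the tangent to $Graph(g_x)$, it is the graph over $\hat{l}_x$ of the linear map $\nabla g_x(P_{l_x}(y)):\hat{l}_x\to\hat{l}_x^\perp$, whose operator norm is $\le \delta$; a short computation on $\mathbf{Gr}(k,n)$ shows that the graph of any linear map of norm $\le\delta$ over $\hat{l}_x$ is within $C(n)\delta$ of $\hat{l}_x$ in the subspace metric of Lemma \ref{equivalent definition of distance of subspaces}. Applying the same reasoning at $y$ using $g_y$ gives $d(\widehat{T_yS},\hat{l}_y)\le C(n)\delta$. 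Since $d$ is a genuine metric on the Grassmannian, the triangle inequality yields $d(\hat{l}_x,\hat{l}_y)\le C(n)\delta$.

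Next I would prove the affine estimate $d_H|_{B_r(x)}(l_x,l_y)\le C(n)\delta r$. The key input is that $y$ lies within $\delta r$ of both affine planes, since $d(y,l_x)=|g_x(P_{l_x}(y))|\le \|g_x\|\le \delta r$ and symmetrically $d(y,l_y)\le \delta r$. Let $p_x\in l_x$ and $p_y\in l_y$ be the points closest to $y$, so $|p_x-p_y|\le 2\delta r$. For any $z\in l_x\cap B_r(x)$, I would write $z=p_x+v$ with $v\in\hat{l}_x$ of norm at most $3r$, pick $w\in\hat{l}_y$ closest to $v$ so that $|v-w|\le d(\hat{l}_x,\hat{l}_y)\cdot|v|\le C(n)\delta r$, and conclude that $p_y+w\in l_y$ is within $C(n)\delta r$ of $z$. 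Swapping $x$ and $y$ gives the other inclusion and completes the Hausdorff bound.

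The only subtlety --- more bookkeeping than obstacle --- is cleanly separating the affine from the linear data on the graphing planes. The direction estimate is a pure tangent-plane comparison at the common point $y$, while the affine-position estimate uses the additional fact that both planes nearly pass through $y$. Once those two ingredients are in hand, everything reduces to triangle inequality.
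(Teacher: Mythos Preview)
Your argument is correct and shares the paper's core idea of comparing the graphing planes via tangent spaces of $S$, but the organization differs. The paper runs the affine chain $l_x\to T_xS\to T_yS\to l_y$: it quotes Lemma~\ref{disturbing the graphing plane}(1) for the two outer comparisons and then bounds $d_H|_{B_{2r}(x)}(T_xS,T_yS)$ directly from the graphing estimates on $g_x$, obtaining the local-Hausdorff bound in one stroke (the direction estimate is implicit). You instead pivot through the single tangent space $T_yS$, exploiting that $y\in S\cap B_r(x)$ lies on both $Graph(g_x)$ and $Graph(g_y)$, and then derive the affine estimate separately from the direction estimate together with the $C^0$ bounds $\|g_x\|,\|g_y\|\le\delta r$. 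Your route is a bit more explicit about separating linear from affine data and avoids the auxiliary comparison $T_xS\sim T_yS$; the paper's is terser because it simply leans on the lemma already in hand.
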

\pf

By Lemma \ref{disturbing the graphing plane} and Lemma \ref{local Hausdorff distance properties}, $d_H|_{B_{3r}(x)}(l_x,T_xS)\le C(n)\delta r$, $d_H|_{B_{3r}(y)}(l_y,T_yS)\le C(n)\delta r$. It suffices to check $d_H|_{B_{2r}(x)}(T_xS,T_yS)\le C(n)\delta r$, which follows from the estimates on $g_x$. 

\epf

The following lemma is an application of the implicit function theorem, which describes the local level sets $\delta$-regular maps.

\begin{lem}\label{local level set}

For $B_{1.5r}(x_0)\subset \Rb^n$, let $f:B_{1.5r}(x_0)\to \Rb^k$ be a $\delta r^{-1}$-regular map with $\delta <\delta(n)$. Let $l_{x_0}=\ker \nabla f(x_0)+x_0$. Then there exists a smooth $g_{x_0}:l_{x_0}\to l_{x_0}^\perp$ with $r_{x_0}||g_{x_0}||$, $||\nabla g_{x_0}||$, $r_{x_0}^{-1}||\nabla^2g_{x_0}||\le C(n)\delta$, such that $f^{-1}(f(x_0))\cap B_r(x_0)=Graph(g_{x_0})\cap B_r(x_0)$.

\end{lem}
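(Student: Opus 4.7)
My plan is to reduce the statement to the quantitative implicit function theorem (Proposition \ref{quantitative implicit function theorem}) via a change of coordinates together with a linear normalization of the target. First I would use Lemma \ref{invariance of norm} to translate $x_0$ to the origin and orthogonally rotate both source and target so that $\ker \nabla f(0) = \Rb^{n-k} \times \{0^k\}$, identifying $l_{x_0}$ with the standard $\Rb^{n-k}$ factor. Writing $\nabla f(0) = (0, A)$ with $A \in GL_k(\Rb)$, its QR decomposition gives $\pi(\nabla f(0)) = (0, B)$ for some $B \in O(k)$, and a further rotation in the last $k$ coordinates of $\Rb^n$ normalizes $B$ to $I_k$. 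Both the regularity hypothesis and the intrinsic norms appearing in the conclusion are preserved under these orthogonal changes.

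I would next normalize in the target by setting $\tilde f(x) = L(\nabla f(0))^{-1}(f(x) - f(x_0))$, so that $\tilde f(0) = 0$, $\nabla \tilde f(0) = (0, I_k)$, and $\tilde f^{-1}(0) = f^{-1}(f(x_0))$. Applying Lemma \ref{regular map equivalent definition} with $L_0 = L(\nabla f(0))$ (and any fixed small $\epsilon$) translates the regularity hypothesis into $\|\nabla^2 \tilde f\|_{L^\infty(B_{1.5r}(0))} \le C(n)\delta r^{-1}$. Rescaling to unit size via
\begin{equation}
    \hat f(u, v) := r^{-1} \tilde f(ru, rv), \quad (u, v) \in B_{1.5}(0^{n-k}) \times B_{1.5}(0^k),
\end{equation}
gives $\hat f(0, 0) = 0$, $\nabla \hat f(0, 0) = (0, I_k)$ and $\|\nabla^2 \hat f\|_{L^\infty} \le C(n)\delta$, which are precisely the hypotheses of Proposition \ref{quantitative implicit function theorem} once $\delta(n)$ is chosen small enough to absorb the constant $C(n)$.

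The implicit function theorem then produces a smooth $\hat g: B_1(0^{n-k}) \to B_1(0^k)$ with $\|\hat g\|, \|\nabla \hat g\|, \|\nabla^2 \hat g\| \le C(n)\delta$ and $\hat f(u, \hat g(u)) = 0$. Undoing the rescaling, $g_{x_0}(y) := r\hat g(y/r)$ defined on $B_r(0) \cap l_{x_0}$ satisfies the claimed bounds $\|g_{x_0}\| \le C(n)\delta r$, $\|\nabla g_{x_0}\| \le C(n)\delta$, and $\|\nabla^2 g_{x_0}\| \le C(n)\delta r^{-1}$. The uniqueness clause in Proposition \ref{quantitative implicit function theorem} will give the inclusion $f^{-1}(f(x_0)) \cap B_r(x_0) \subset Graph(g_{x_0})$: any $(u_0, v_0) \in B_r(0)$ with $\tilde f(u_0, v_0) = 0$ automatically has $|u_0|, |v_0| < r$, so by uniqueness $v_0/r = \hat g(u_0/r)$, i.e.\ $v_0 = g_{x_0}(u_0)$. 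The reverse inclusion is immediate from the construction. Finally I would extend $g_{x_0}$ from $B_r(0) \cap l_{x_0}$ to all of $l_{x_0}$ via Remark \ref{continuous extension} or a smooth cutoff, preserving the three bounds up to a constant without altering $Graph(g_{x_0}) \cap B_r(x_0)$. The only semi-delicate point is controlling how the $C(n)$ constants compound through Lemma \ref{regular map equivalent definition}, the rescaling, and Proposition \ref{quantitative implicit function theorem}, which is handled uniformly by choosing $\delta(n)$ small enough at the outset.
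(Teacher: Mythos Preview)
Your proposal is correct and follows essentially the same route as the paper: normalize the target by $L(\nabla f(x_0))^{-1}$, rotate source (and translate) so that $\nabla f(x_0) = (0, I_k)$, invoke Lemma \ref{regular map equivalent definition} to convert regularity into a uniform Hessian bound, and then apply the quantitative implicit function theorem (Proposition \ref{quantitative implicit function theorem}) together with Remark \ref{continuous extension}. The paper's proof is terser---it skips the explicit rescaling to unit scale and the uniqueness argument for the inclusion $f^{-1}(f(x_0)) \cap B_r(x_0) \subset Graph(g_{x_0})$---but your added detail is sound and the two arguments are the same in substance.
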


\pf

Note that $f'=l(\nabla f(x_0))^{-1}f$ is also $\delta r^{-1}$-regular and has the same level sets as $f$. We may assume $\nabla f(x_0)\in O_{k\times n}(\Rb)$. Lemma \ref{regular map equivalent definition} implies that $||\nabla^2 f||_{B_{2r}(x_0)}\le 2\delta r^{-1}$. By Lemma \ref{invariance of norm}, $|\nabla^2f|$ stays invariant under change of orthogonal basis. Thus without loss of generality we may assume $x_0=0^n$, $\nabla f(x_0)=(0,I_k)$ and $l_{x_0}=\{0^{n-k}\}\times \Rb^k$.

Note that $B_r(0^n)\subset B_r(0^{n-k})\times B_r(0^k)\subset B_{1.5r}(0^n)$. By Proposition \ref{quantitative implicit function theorem} and Remark \ref{continuous extension}, the conclusion follows immediately.

\epf

\pf[Proof of Proposition \ref{level sets of delta_x regular maps}]
By Lemma \ref{local level set}, for each $x_0\in f^{-1}(c)$, there exists $g_{x_0}:l_{x_0}\to l_{x_0}^\perp$, with $r_{x_0}||g_{x_0}||$, $||\nabla g_{x_0}||$, $r_{x_0}^{-1}||\nabla^2g_{x_0}||\le C(n)\delta$, such that $S\cap B_{r}(x_0)=Graph(g_{x_0})\cap B_{r}(x_0)$. Thus the conclusion follows.
\epf

Additionally, the techniques in this section describes the image of regular maps. We start from the following local lemma.

\begin{lem}\label{local image of regular map}
For $B_{r}(x_0)\subset \Rb^n$, let $f:B_{r}(x_0)\to \Rb^k$ be a $\delta r^{-1}$-regular map with $\delta <\delta(n)$. Let $r'=(1-C(n)\delta)r/|L(\nabla f(x_0))^{-1}|_{op}$. Then for any $y\in B_{r'}(f(x_0))$, there exists $x\in B_r(x_0)\cap f^{-1}(y)$ with 
\begin{equation}
    |x-x_0|\le (1+C(n)\delta)|L(\nabla f(x_0))^{-1}|\cdot |y-y_0|.
\end{equation}
\end{lem}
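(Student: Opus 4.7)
The plan is to reduce to Proposition \ref{quantitative inverse function theorem} by normalizing $f$ and restricting to the $k$-dimensional slice through $x_0$ transverse to the fiber. Write $L_0 := L(\nabla f(x_0))$ and set $\tilde f := L_0^{-1}(f - f(x_0))$. By the QR decomposition, $\nabla \tilde f(x_0) = L_0^{-1}\nabla f(x_0) = \pi(\nabla f(x_0)) \in O_{k\times n}(\Rb)$. Moreover, applying Lemma \ref{regular map equivalent definition} with the choice $L_0$ in place of the auxiliary matrix gives
\begin{equation}
\|\nabla^2 \tilde f\|_{L^\infty(B_r(x_0))} \le (1+C(n)\delta)\,\delta r^{-1}.
\end{equation}
By Lemma \ref{invariance of norm} this norm is invariant under orthonormal changes of basis in $\Rb^n$, so I may assume $x_0 = 0^n$ and $\pi(\nabla f(x_0)) = (0_{k\times(n-k)},\,I_k)$, i.e. $\ker\nabla\tilde f(x_0) = \Rb^{n-k}\times\{0^k\}$.

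Next I restrict to the slice: define $h : \overline B_r(0^k) \to \Rb^k$ by $h(z) := \tilde f(0^{n-k}, z)$, noting the slice is contained in $B_r(0^n)$. Then $h(0)=0$, $\nabla h(0) = I_k$, and $\|\nabla^2 h\| \le (1+C\delta)\delta r^{-1}$ because the norm dominates any restriction. After rescaling $\tilde h(z) := r^{-1} h(rz)$, the map $\tilde h$ satisfies the hypotheses of Proposition \ref{quantitative inverse function theorem} with parameter $(1+C\delta)\delta$, so $\tilde h$ is a diffeomorphism onto its image with $B_{1-C'(n)\delta}(0^k) \subset \tilde h(B_1(0^k))$ and $\|\nabla \tilde h^{-1} - I_k\| \le C(n)\delta$. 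Unscaling, $h$ is a diffeomorphism from $B_r(0^k)$ onto a set containing $B_{(1-C'\delta)r}(0^k)$, and
\begin{equation}
|h^{-1}(\tilde y)| \le (1+C(n)\delta)\,|\tilde y|
\end{equation}
for every $\tilde y$ in the codomain ball.

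Finally I translate back. Given $y\in B_{r'}(f(x_0))$, set $\tilde y := L_0^{-1}(y-f(x_0))$; then $|\tilde y| \le |L_0^{-1}|_{op}|y-f(x_0)| < (1-C'\delta)r$ by the very definition of $r'$, so $\tilde y$ lies in the domain of $h^{-1}$. Let $z := h^{-1}(\tilde y)$ and $x := (0^{n-k}, z)$. Then $\tilde f(x) = h(z) = \tilde y$, hence $f(x) = f(x_0) + L_0\tilde y = y$, and
\begin{equation}
|x-x_0| = |z| \le (1+C\delta)|\tilde y| \le (1+C(n)\delta)\,|L_0^{-1}|_{op}\,|y-f(x_0)|,
\end{equation}
with $|x-x_0| < r$ so $x\in B_r(x_0)$ as required.

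There is no real obstacle here; the only points to be careful about are (i) that the Hessian bound for $\tilde f$ is preserved when passing to the restriction $h$ (immediate, since restricting to a coordinate subspace cannot increase the tensor norm), and (ii) keeping track of the operator norm $|L_0^{-1}|_{op}$ when translating the ball in the target back to the $\tilde y$-coordinates, which is exactly why the definition of $r'$ contains that factor.
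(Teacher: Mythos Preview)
Your proof is correct and follows essentially the same approach as the paper: normalize by $L_0^{-1}$, rotate so that $\pi(\nabla f(x_0))=(0,I_k)$, restrict to the $k$-dimensional slice through $x_0$ transverse to $\ker\nabla f(x_0)$, and apply the quantitative inverse function theorem. The paper's version is slightly terser (it integrates the Hessian bound to get $\|\nabla g - I_k\|\le C\delta$ directly rather than rescaling and invoking the $C^2$ hypothesis of Proposition~\ref{quantitative inverse function theorem}), but the substance is identical.
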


\pf

Let $f'=L(\nabla f(x_0))^{-1}f$. By change of coordinate on $\Rb^n$ we may assume $\nabla f'(x_0)=(0,I_k)$. Denote $x_0=(y_0,z_0)\in \Rb^{n-k}\times \Rb^k$. Define $g:B_r(v_0)\to \Rb^k:z\mapsto f'(y_0,z)$. Then $g(v_0)=f'(x_0)$, $||\nabla g-I_k||\le \delta$ in $B_r(v_0)$.

For $y'=L(\nabla f(x_0))^{-1}y$, $|y'-f'(x_0)|\le (1-C(n)\delta)r$. By Proposition \ref{quantitative inverse function theorem}, there exists $v\in B_r(v_0)$ such that $g(v) =y'$, or equivalently $f(u_0,v)=y$, with
\begin{equation}
    |v'-v_0|\le (1+C(n)\delta)|y'-y_0|\le (1+C(n)\delta)|L(\nabla f(x_0))^{-1}|\cdot |y-y_0|.
\end{equation}
Taking $x=(u_0,v)$ completes the proof.

\epf

Thus a regular map $f$ with a global $||L(\nabla f)^{-1}||$ bound
is actually surjective. And we may find the preimage of any point within a certain region.
\begin{prop}\label{distance between level sets of regular map}
    Let $f:\Rb^n\to \Rb^k$ be a $C$-regular map. Assume $||L(\nabla f)^{-1}||\le \Lambda$. Then $f$ is surjective. For any $c,d\in \Rb^k$, $d_H(f^{-1}(c),f^{-1}(d))\le \Lambda|c-d|$.
\end{prop}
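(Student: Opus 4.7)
The idea is to lift a straight-line segment in $\Rb^k$ from $c$ to $d$ to a path in $\Rb^n$ by iterating Lemma \ref{local image of regular map} at a small enough scale. Since $f$ is $C$-regular globally, on any ball $B_r(x)$ it is $(Cr)\cdot r^{-1}$-regular, so Lemma \ref{local image of regular map} applies provided $Cr < \delta(n)$. Moreover, the global bound $|L(\nabla f(x))^{-1}|_{op} \le \Lambda$ ensures that the reachable radius $r' = (1 - C(n)Cr)r/|L(\nabla f(x))^{-1}|_{op}$ from the lemma is at least $(1 - C(n)Cr)r/\Lambda$ uniformly in $x$.

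Concretely, I would fix any $x_0 \in \Rb^n$, set $c_0 = f(x_0)$, and pick any target $d \in \Rb^k$. For a large integer $N$, subdivide the segment from $c_0$ to $d$ into equal pieces $c_0, c_1, \dots, c_N = d$ with $|c_{i+1}-c_i| = |c_0-d|/N$, and choose $r$ of order $\Lambda |c_0-d|/N$, small enough that $Cr < \delta(n)$ and $|c_{i+1}-c_i| \le (1 - C(n)Cr)r/\Lambda$. Iteratively, given $x_i$ with $f(x_i) = c_i$, Lemma \ref{local image of regular map} applied on $B_r(x_i)$ produces some $x_{i+1} \in B_r(x_i) \cap f^{-1}(c_{i+1})$ with
\begin{equation*}
    |x_{i+1}-x_i| \le (1 + C(n)Cr)\Lambda|c_{i+1}-c_i|.
\end{equation*}
Summing telescopically gives $|x_N - x_0| \le (1 + C(n)Cr)\Lambda|c_0 - d|$.

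Now I let $N\to\infty$ (hence $r\to 0$). The resulting sequence of endpoints $x_N^{(N)}$ is bounded in $\Rb^n$, so by Bolzano-Weierstrass a subsequence converges to some $x^*$. Continuity of $f$ gives $f(x^*) = d$, and the bound passes to the limit as $|x^* - x_0| \le \Lambda|c_0 - d|$. Applied with arbitrary $x_0$, this establishes surjectivity of $f$; applied starting from any $x \in f^{-1}(c)$ with target $d$, it gives $d(x, f^{-1}(d)) \le \Lambda|c-d|$, and symmetry in $c$ and $d$ yields the Hausdorff bound $d_H(f^{-1}(c), f^{-1}(d)) \le \Lambda|c-d|$.

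The only point that requires care is the bookkeeping of constants: one must verify that $r$ can be chosen to simultaneously satisfy the smallness condition $Cr < \delta(n)$ and the reachability condition $|c_{i+1}-c_i| \le (1-C(n)Cr)r/\Lambda$, which is possible for all sufficiently large $N$. Crucially, the per-step error $(1+C(n)Cr)$ is additive across the $N$ steps rather than multiplicative, so there is no exponential blow-up and the constant tends cleanly to $1$ as $r\to 0$. No other substantive obstacle arises; the argument is essentially path-lifting with Lemma \ref{local image of regular map} supplying the local lifts.
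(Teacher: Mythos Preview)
Your proposal is correct and follows essentially the same path-lifting argument as the paper: subdivide the segment $cd$ in $\Rb^k$, iterate Lemma \ref{local image of regular map} at a small scale $r$ with $\delta = Cr$, and sum the step bounds to get $|x_N-x_0|\le (1+C(n)Cr)\Lambda|c-d|$, then let $r\to 0$. The one unnecessary detour is the Bolzano--Weierstrass step: since each $x_N^{(N)}$ already lies in $f^{-1}(d)$, you have $d(x_0,f^{-1}(d))\le (1+C(n)Cr_N)\Lambda|c-d|$ for every $N$, and taking the infimum over $N$ gives the sharp bound directly without extracting a convergent subsequence---this is how the paper finishes.
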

\pf
Take $r$ small and let $\delta = Cr$. Then $f$ is $\delta r^{-1}$ regular. By Lemma \ref{local image of regular map}, for any $c,c_1\in \Rb^k$ with $|c-c_1|\le (1-C(n)\delta)r/\Lambda$, if $x\in f^{-1}(c)$, then there exists $x_1\in f^{-1}(c_1)$ with $|x-x'|\le (1+C(n)\delta)\Lambda |c-c_1|$.

Taking a sequence of points $c_0,c_1,c_2,\dots,c_N$ on the segment $cd$ with $c_0=c$, $c_N=d$, we may assume $|c_i-c_{i+1}|\le (1-C(n)\delta)r/\Lambda$. Thus we may pick $x_1,\dots,x_N$ with $|x_i-x_{i+1}|\le (1+C(n)\delta)\Lambda |c_i-c_{i+1}|$. Thus $f(x_N)=d$ and
\begin{equation}
    |x_N-x|\le (1+C(n)\delta)\Lambda|c-d|.
\end{equation}
Thus, $d_H(f^{-1}(c),f^{-1}(d))\le (1+C(n)\delta)\Lambda|c-d|$. Since $\delta\to 0$ as $r\to 0$, we have 
\begin{equation}
    d_H(f^{-1}(c),f^{-1}(d))\le \Lambda |c-d|.
\end{equation}
\epf

\subsection{Closest Point Projection to Graphical Submanifolds}

In this subsection, we will recall the closest point projection and continue the discussion about level sets of regular maps.

\begin{dfn}
    Let $A\subset \Rb^n$ be a closed subset, for any $x\in \Rb^n$, define the closest point projection $P_A(x)$ to be the nearest point on $A$ to $x$:
    \begin{equation}
        P_A(x):=\argmin{y\in A}d(x,y).
    \end{equation}
\end{dfn}

The closest point projection is a set in general, but we'll see that for a graphical submanifold $S$, $P_S$ is a single valued map near $S$.

\begin{prop}\label{projection to graphical submanifold}

Let $S\subset \Rb^n$ be a $k$-dimensional $(\delta,r)$-graphical submanifold with $\delta<\delta(n)$. Then the closest point projection map $P_S:B_{r/2}(S)\to S, x\mapsto \argmin{y\in S}d(x,y)$ is well defined and smooth, with $|\nabla P_S(y) - \hat{ P}_{l_x}| < C(n)\delta$ for any $y\in B_{r/2}(S)$.
    
\end{prop}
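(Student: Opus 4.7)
The plan is to pick some auxiliary point $y_0 \in S$ near $y$, work in the graphical chart around $y_0$, and solve the closest-point equation via the quantitative implicit function theorem. Given $y \in B_{r/2}(S)$, choose $y_0 \in S$ with $|y - y_0| < r/2$. Any $z \in S$ with $|z - y| \leq |y_0 - y|$ satisfies $|z - y_0| < r$ and hence lies in $S \cap B_r(y_0) = Graph(g_{y_0}) \cap B_r(y_0)$. This set is compact, so a minimizer of $z \mapsto |z - y|$ on $S$ exists, and every minimizer sits inside the single graphical chart at $y_0$.

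Next, in that chart I would parametrize $S \cap B_r(y_0)$ by $\gamma(w) = w + g_{y_0}(w)$ for $w$ in the affine plane $l_{y_0}$. The critical-point equation for $w \mapsto |\gamma(w) - y|^2$ reads
\[
G(w, y) := \nabla \gamma(w)^T (\gamma(w) - y) = 0, \qquad G : l_{y_0} \times \Rb^n \to l_{y_0}.
\]
Using $\nabla \gamma = \hat{P}_{l_{y_0}} + \nabla g_{y_0}$, the bounds $\|\nabla g_{y_0}\| \leq \delta$, $\|\nabla^2 g_{y_0}\| \leq \delta r^{-1}$, and $|\gamma(w) - y| \leq r$, a short computation gives
\[
\partial_w G(w, y) = \nabla \gamma(w)^T \nabla \gamma(w) + \nabla^2 \gamma(w)^T (\gamma(w) - y) = I_{l_{y_0}} + O(\delta).
\]
After rescaling $G$ by $r$ to place everything at unit scale, Proposition \ref{quantitative implicit function theorem} yields a unique smooth solution $w = w^*(y)$. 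Since every global minimizer is a critical point in this chart, uniqueness forces $P_S(y) := \gamma(w^*(y))$ to be the unique closest point, and $P_S$ inherits smoothness from $w^*$.

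For the derivative bound I would differentiate $G(w^*(y), y) = 0$ in $y$. With $\partial_y G = -\nabla \gamma(w^*)^T$ and the formula for $\partial_w G$ above,
\[
\nabla P_S(y) = \nabla \gamma(w^*) \, (\partial_w G)^{-1} \, \nabla \gamma(w^*)^T = \hat{P}_{l_{y_0}} + O(\delta),
\]
using $\nabla \gamma(w^*) \nabla \gamma(w^*)^T = \hat{P}_{l_{y_0}} + O(\delta)$. The corollary to Lemma \ref{disturbing the graphing plane} on slowly moving graphing planes then gives $|\hat{P}_{l_{y_0}} - \hat{P}_{l_{P_S(y)}}| \leq C(n)\delta$ since $|y_0 - P_S(y)| < r$, finishing the estimate.

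The main obstacle is the interface between the local uniqueness delivered by the IFT (a priori only in a small neighborhood of the starting guess $y_0$) and the global uniqueness of the minimizer over all of $S$. The localization step in the first paragraph is what resolves this: the entire search region is a compact subset of a single graphical chart, so IFT uniqueness of the critical point automatically upgrades to uniqueness of the global minimum.
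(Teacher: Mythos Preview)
Your proposal is correct and follows essentially the same approach as the paper: the paper isolates the local computation as Lemma~\ref{projection to a graph} (setting up the critical-point equation for the squared distance in a single graphical chart and solving it via the quantitative implicit function theorem) and then invokes it after the same localization step you give in your first paragraph. Your inline version is the same argument, with the added explicit remark that $\hat P_{l_{y_0}}$ and $\hat P_{l_{P_S(y)}}$ differ by $O(\delta)$ via the corollary to Lemma~\ref{disturbing the graphing plane}.
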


The closest point projection between the level sets of two maps nearby is a biLipschitz diffeomorphism.

\begin{prop}\label{projection between level sets is diffeomorphism}
For $\delta<\delta(n)$, $r>0$, assume $f,g:\Rb^n\to \Rb^k$ are $\delta r^{-1}$-regular maps, with $||L(\nabla f)^{-1}(f-g)||\le \delta r$, $||L(\nabla f)^{-1}(\nabla f-\nabla g)||\le \delta$. Then for any $c\in \Rb^k$, the closest point projection $P_{g^{-1}(c)}:f^{-1}(c)\to g^{-1}(c)$ is a diffeomorphism. Moreover,
\bnu
\item For any $x\in f^{-1}(c)$, $|P_{g^{-1}(c)}(x)-x|\le C(n)\delta r$.
\item $P_{g^{-1}(c)}$ is biLipschitz: for $x, y\in f^{-1}(c)$, 
\begin{equation}
    (1-C(n)\delta) |x-y|\le|P_{g^{-1}(c)}(x)-P_{g^{-1}(c)}(y)|\le (1+C(n)\delta) |x-y|.
\end{equation}
\enu
\end{prop}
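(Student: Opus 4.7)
The plan is to reduce both level sets to graphs over a common reference plane and then analyze $P_{M_g}|_{M_f}$ in graph coordinates, using the machinery established earlier in this section.

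I would start by normalizing at any $x_0 \in f^{-1}(c)$: replace $f$ with $L(\nabla f(x_0))^{-1} f$ and rotate $\Rb^n$ so that $\nabla f(x_0) = (0, I_k)$. By Lemma \ref{regular map equivalent definition} both $f$ and $g$ remain $C(n)\delta r^{-1}$-regular, while the hypotheses rewrite as $||f - g||_{L^\infty} \le C(n)\delta r$ and $||\nabla f - \nabla g||_{L^\infty} \le C(n)\delta$. Proposition \ref{level sets of delta_x regular maps} then gives that $M_f := f^{-1}(c)$ and $M_g := g^{-1}(c)$ are both $(C(n)\delta, r)$-graphical submanifolds, and since the $C(n)\delta$-closeness of the gradients forces the graphing planes of $M_f$ and $M_g$ at $x_0$ to be $C(n)\delta$-close, Lemma \ref{disturbing the graphing plane} produces graphing functions $h_f, h_g : l_{x_0} \to l_{x_0}^\perp$ for both submanifolds over the common plane $l_{x_0} := \ker \nabla f(x_0) + x_0$, with the usual $C^2$ estimates.

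For item (1), applying Lemma \ref{local image of regular map} to $g$ at $x \in M_f$ produces $y \in M_g$ with $|y - x| \le C(n)\delta r$, because $|g(x) - c| = |g(x) - f(x)| \le C(n)\delta r$. Hence $x \in B_{r/2}(M_g)$, so Proposition \ref{projection to graphical submanifold} makes $P_{M_g}(x)$ a well-defined smooth single point with $|P_{M_g}(x) - x| \le |y - x| \le C(n)\delta r$. For item (2) and the diffeomorphism, I would parameterize $\psi_i : l_{x_0} \to \Rb^n$, $y \mapsto (y, h_i(y))$; each $\psi_i$ is $(1 \pm C(n)\delta)$-biLipschitz. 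The conjugate $\tau := \psi_g^{-1} \circ P_{M_g} \circ \psi_f$ satisfies $\nabla \tau = \nabla \psi_g^{-1} \cdot \nabla P_{M_g} \cdot \nabla \psi_f$, where each outer factor is within $C(n)\delta$ of the identity on $l_{x_0}$ by the graph $C^1$-estimates, and where Proposition \ref{projection to graphical submanifold} combined with the slow motion of graphing planes on a graphical submanifold gives $|\nabla P_{M_g}(\cdot) - \hat{P}_{l_{x_0}}| \le C(n)\delta$. Therefore $||\nabla \tau - I_{n-k}||_{L^\infty} \le C(n)\delta$, which upon conjugating back through $\psi_f, \psi_g$ delivers the two-sided biLipschitz bound of item (2); Proposition \ref{quantitative inverse function theorem} then shows $\tau$, and hence $P_{M_g}|_{M_f}$, is a diffeomorphism, with surjectivity following from the symmetric statement applied with the roles of $f$ and $g$ swapped (valid because Lemma \ref{smoothness of QR cor1} transfers the hypothesis bounds to $L(\nabla g)^{-1}$ up to a factor $1 + C(n)\delta$).

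The hard part will be the derivative identification: Proposition \ref{projection to graphical submanifold} only gives closeness of $\nabla P_{M_g}(y)$ to the moving projection $\hat{P}_{l_{P_{M_g}(y)}}$, so reaching closeness to the fixed reference projection $\hat{P}_{l_{x_0}}$ requires invoking the fact, established for $(C\delta, r)$-graphical submanifolds, that graphing planes at nearby points differ by at most $C(n)\delta$. Everything else is an orchestration of the graphical-submanifold estimates with the quantitative inverse/implicit function theorems already proved.
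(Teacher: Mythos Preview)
Your approach is essentially the paper's: normalize at a point $x_0\in f^{-1}(c)$, write both level sets as graphs over the common plane $l_{x_0}$, and analyze the projection in graph coordinates. The paper packages the local step into Lemma~\ref{projection between graphs}, whose content you are re-deriving via the conjugate map $\tau$.

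There is, however, a local-to-global gap. The graphing functions $h_f,h_g$ (and hence $\tau$) are only valid on a ball of radius $\sim r$ around $x_0$; the level sets need not be global graphs over $l_{x_0}$. Consequently your estimate $\|\nabla\tau - I_{n-k}\|\le C(n)\delta$ only yields the biLipschitz bound for $x,y\in f^{-1}(c)$ with $|x-y|\lesssim r$, and Proposition~\ref{quantitative inverse function theorem} only shows $P_{M_g}|_{M_f}$ is a \emph{local} diffeomorphism near $x_0$. The paper closes this in two moves. For $|x-y|\ge 0.99r$, item~(1) directly gives $\bigl||P(x)-P(y)|-|x-y|\bigr|\le 2C(n)\delta r\le C(n)\delta|x-y|$, which supplies the biLipschitz bound at large separation and hence global injectivity. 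For surjectivity, given $y\in g^{-1}(c)$ one sets $x:=P_{f^{-1}(c)}(y)$, so $|x-y|\le C(n)\delta r$, and then invokes the local covering $g^{-1}(c)\cap B_{0.9r}(x)\subset P_{g^{-1}(c)}\bigl(f^{-1}(c)\cap B_{0.99r}(x)\bigr)$ coming from Lemma~\ref{projection between graphs}. Your ``swap $f$ and $g$'' idea does produce such an $x$ (it is exactly item~(1) for the reversed pair), but note that the symmetric conclusion that $P_{M_f}|_{M_g}$ is a local diffeomorphism does not by itself yield surjectivity of $P_{M_g}|_{M_f}$; you still need the local covering statement above.
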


To prove Proposition \ref{projection to graphical submanifold} and \ref{projection between level sets is diffeomorphism}, we first need a regularity control for closest point projection maps.

\begin{lem}\label{projection to a graph}

Let $f:B_1(0^{n-k})\to \Rb^k$ be a smooth function with $|f(0^{n-k})|$, $|\nabla f(0^{n-k})|$, $||\nabla^2 f||\le \delta$. Let 
$G=Graph(f)$. We may define the closest point projection map \begin{equation}
P_G:B_{0.99}(0^{n-k})\times B_{1}(0^k)\to G,\, x\mapsto \argmin{y\in G}d(x,y).
\end{equation}
Then $P_G$ is well-defined and smooth, with $|P_G(0^{n-k})|$, $||\nabla P_G-(I_{n-k},0)||\le C(n)\delta$
\end{lem}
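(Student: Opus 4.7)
The plan is to realize $P_G(x)$ as the unique critical point of the squared distance $h_x(y) := |y - x_1|^2 + |f(y) - x_2|^2$ on $B_1(0^{n-k})$ and solve the resulting equation via a quantitative inverse function theorem. Parameterizing $G$ by $y \in B_1(0^{n-k})$ through $y \mapsto (y, f(y))$, the critical point equation $\nabla h_x(y) = 0$ rearranges to $\Psi(y, x_2) = x_1$, where
\begin{equation}
    \Psi(y, x_2) := y + \nabla f(y)^T (f(y) - x_2).
\end{equation}

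I would first record the uniform bounds $|f(y)| \le 3\delta$ and $|\nabla f(y)| \le 2\delta$ on $B_1(0^{n-k})$, obtained by integrating the hypothesis $\|\nabla^2 f\| \le \delta$ along a radial segment. These give $|\Psi(0^{n-k}, x_2)| \le |\nabla f(0^{n-k})|(|f(0^{n-k})| + |x_2|) \le C(n)\delta$, while the direct computation
\begin{equation}
    \nabla_y \Psi(y, x_2) = I_{n-k} + \nabla f(y)^T \nabla f(y) + (f(y) - x_2)^T \nabla^2 f(y)
\end{equation}
yields $\|\nabla_y \Psi - I_{n-k}\|_{L^\infty(B_1(0^{n-k}))} \le C(n)\delta$. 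For each fixed $x_2 \in B_1(0^k)$, the $C^1$ version of Proposition \ref{quantitative inverse function theorem} (cf.\ the remark following Proposition \ref{quantitative implicit function theorem}) applied to $y \mapsto \Psi(y, x_2) - \Psi(0^{n-k}, x_2)$ then shows $\Psi(\cdot, x_2)$ is a diffeomorphism from $B_1(0^{n-k})$ onto an open set containing $B_{1 - C(n)\delta}(\Psi(0^{n-k}, x_2))$, which in turn contains $B_{0.99}(0^{n-k})$ uniformly in $x_2$ by using the slack $1 - 0.99$ to absorb the $O(\delta)$ shift. Hence for each $x$ in the domain there is a unique $\psi(x) \in B_1(0^{n-k})$ solving $\Psi(\psi(x), x_2) = x_1$, jointly smooth in $x$ by the classical implicit function theorem; set $P_G(x) := (\psi(x), f(\psi(x)))$.

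To verify $P_G(x)$ is actually the closest point on $G$ and not a mere critical point, I would compute
\begin{equation}
    \nabla^2 h_x(y) = 2 I_{n-k} + 2 \nabla f(y)^T \nabla f(y) + 2 (f(y) - x_2)^T \nabla^2 f(y),
\end{equation}
whose smallest eigenvalue is at least $2 - C(n)\delta > 0$; hence $h_x$ is strictly convex on the convex set $B_1(0^{n-k})$, and its unique interior critical point $\psi(x)$ is the global minimizer. The bound $|P_G(0^n)| \le C(n)\delta$ follows from $\Psi(0^{n-k}, 0^k) = O(\delta)$ (so $|\psi(0^n)| \le C(n)\delta$) combined with $|f(\psi(0^n))| \le 3\delta$. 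For the derivative, differentiating $\Psi(\psi(x), x_2) = x_1$ gives
\begin{equation}
    \nabla \psi(x) = (\nabla_y \Psi)^{-1}(I_{n-k}, \nabla f(\psi(x))^T) = (I_{n-k}, 0) + O(\delta),
\end{equation}
using $(\nabla_y \Psi)^{-1} = I + O(\delta)$ and $|\nabla f(\psi)| \le 2\delta$. Combining with $\nabla P_G(x) = \begin{pmatrix} \nabla \psi(x) \\ \nabla f(\psi(x)) \, \nabla \psi(x) \end{pmatrix}$ and invoking $|\nabla f(\psi)| \le 2\delta$ once more on the bottom block yields $\|\nabla P_G - (I_{n-k}, 0)\| \le C(n)\delta$.

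The main subtlety I anticipate is that the hypothesis controls only $\|\nabla^2 f\|$, not $\nabla^3 f$, so I cannot use the full $C^2$ form of Proposition \ref{quantitative inverse function theorem}; instead the $C^1$ version must be invoked. This is harmless because the lemma asks only for $C^0$ and $C^1$ estimates on $P_G$, and the crucial bound $\|\nabla_y \Psi - I_{n-k}\|_{L^\infty}$ is obtained without ever differentiating $\Psi$ a second time.
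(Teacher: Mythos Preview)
Your proposal is correct and follows essentially the same strategy as the paper: reduce the closest-point problem to the critical-point equation for the squared distance, then solve it via a quantitative inverse/implicit function theorem using the $C^1$ version (since only $\|\nabla^2 f\|$ is controlled). The only cosmetic differences are that the paper parametrizes by the offset $y-x_1$ and invokes Proposition \ref{quantitative implicit function theorem} rather than Proposition \ref{quantitative inverse function theorem}, and that you add an explicit strict-convexity argument to certify the critical point is the global minimizer, whereas the paper leaves this implicit in the uniqueness of the implicit-function solution.
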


\pf

For each $x=(x_1,x_2)\in B_{0.99}(0^{n-k})\times B_{1}(0^k)$, it is clear that there exists at least one nearest point on $G$ to $x$. If $(y,f(y))$ is a nearest point to $x$, then $|y-x_1|<0.01$ and $\nabla_y|x-(y,f(y))|^2=0$, where $\nabla_y$ yields a column vector.

Define $g: B_{0.99}(0^{n-k})\times B_{1}(0^k) \times B_{0.01}(0^{n-k})\to \Rb^{n-k}$,
\begin{equation}
    g(x,y)=\frac{1}{2}\nabla_y|x-(x_1+y,f(x_1+y))|^2.
\end{equation}
Then $\nabla_y|x-(y,f(y))|^2=0$ is equivalent to $g(x,y-x_1)=0$.
Compute
\begin{equation}
    g(x,y) = \frac{1}{2}\nabla_y|(-y, x_2-f(x_1+y))|^2 = y +\langle x_2-f(x_1+y), \nabla f(x_1+y)\rangle
\end{equation}
Since $||f||$, $||\nabla f||$, $||\nabla^2f||\le \delta$, we yield
\begin{equation}
    |g(0^n,0^{n-k})|,\; ||\nabla g-(0,I_{n-k})||\le C(n)\delta.
\end{equation}
Since $B_{0.99}(0^{n-k})\times B_{1}(0^k)$ is convex, by a slightly modified version of the $C^1$ version of Proposition \ref{quantitative implicit function theorem}, there exists a $u:B_{0.99}(0^{n-k})\times B_{1}(0^k)\to B_{0.01}(0^{n-k})$, such that $u(x)$ is the unique point with $g(x,u(x))=0$, with $||u||$, $||\nabla u||\le C(n)\delta$. By the uniqueness of $u(x)$, $(x_1+u(x),f(x_1+u(x)))$ must be the unqiue nearest point on $G$ to $x$. Thus $P_G$ is well-defined:
\begin{equation}
    P_G(x)=(x_1+u(x),f(x_1+u(x))).
\end{equation}
The estimates follows from the expression of $P_G$ immediately.
\epf

Lemma \ref{projection to a graph} easily extends to an entire graphical submanifold.

\pf[Proof of Proposition \ref{projection to graphical submanifold}]

For each $x\in S$, $y\in B_{r/2}(S)$, the nearest point to $y$ on $S$ lies in $S\cap B_{r}(x)$. Since $S$ is a graph in $B_{r}(x)$ with graphing plane $l_x$, the conclusion follows from Lemma \ref{projection to a graph}.

\epf

If we have two almost zero functions, then the closest point projection between their graphs is a local diffeomorphism. The following lemma is a local version of Proposition \ref{projection between level sets is diffeomorphism}.

\begin{lem}\label{projection between graphs}

Let $f,g:B_1(0^{n-k})\to \Rb^k$ be smooth functions with $|f(0^{n-k})|$, $|\nabla f(0^{n-k})|$, $||\nabla^2 f||$, $|g(0^{n-k})|$, $|\nabla g(0^{n-k})|$, $||\nabla^2 g||\le \delta$. Let $P: Graph(f|_{B_{0.99}(0^{n-k})})\to Graph(g)$, be the closest point projection map. Then $P$ is a diffeomorphism onto its image. Moreover, we have
\bnu
\item $|P(x)-x|\le C(n)\delta$.
\item $P$ is $(1+C(n)\delta)$ biLipschitz, i.e.
\begin{equation}
    (1-C(n)\delta)|x-y|\le|P(x)-P(y)|\le (1+C(n)\delta)|x-y|.
\end{equation}
\item $Graph(g|_{B_{0.9}(0^{n-k})})\subset P(Graph(f|_{B_{0.99}(0^{n-k})}))$.
\enu.
    
\end{lem}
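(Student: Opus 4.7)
The plan is to reduce everything to the quantitative inverse function theorem by passing to coordinate parameterizations of both graphs. First I apply Lemma \ref{projection to a graph} with $f$ replaced by $g$ to obtain that the ambient closest point projection $P_G$ onto $Graph(g)$ is well-defined and smooth on $B_{0.99}(0^{n-k})\times B_{1}(0^k)$, and in fact has the explicit form $P_G(x_1,x_2)=(x_1+u(x_1,x_2),\,g(x_1+u(x_1,x_2)))$ for some smooth $u$ with $\|u\|,\|\nabla u\|\le C(n)\delta$. The projection $P$ in the statement is simply the restriction of $P_G$ to $Graph(f|_{B_{0.99}(0^{n-k})})$, which lies in the domain of $P_G$ since $\|f\|\le C(n)\delta$ by Taylor expansion.

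Next, parameterize the two graphs by $\psi_f(y)=(y,f(y))$ and $\psi_g(y)=(y,g(y))$. Under these coordinates one has $P\circ\psi_f=\psi_g\circ h$, where $h(y):=y+u(y,f(y))$. The hypotheses on $f$ combined with the estimates on $u$ give
\begin{equation}
\|h-\id\|_{L^\infty(B_{0.99}(0^{n-k}))},\ \|\nabla h-I_{n-k}\|_{L^\infty(B_{0.99}(0^{n-k}))},\ \|\nabla^2 h\|_{L^\infty(B_{0.99}(0^{n-k}))}\le C(n)\delta.
\end{equation}
A rescaled application of Proposition \ref{quantitative inverse function theorem} then shows $h$ is a smooth diffeomorphism of $B_{0.99}(0^{n-k})$ onto its image, which contains $B_{0.99-C(n)\delta}(0^{n-k})\supset B_{0.9}(0^{n-k})$ for $\delta$ sufficiently small.

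From the factorization $P=\psi_g\circ h\circ\psi_f^{-1}$, one reads off that $P$ is a smooth diffeomorphism onto its image. For (i), I would write $P(\psi_f(y))-\psi_f(y)=(u(y,f(y)),\,g(h(y))-f(y))$ and use $\|u\|\le C(n)\delta$ together with $\|f\|,\|g\|\le C(n)\delta$. For (ii), observe that $\psi_f,\psi_g$ are $(1\pm C(n)\delta)$-biLipschitz (from $\|\nabla f\|,\|\nabla g\|\le C(n)\delta$) and $h$ is $(1\pm C(n)\delta)$-biLipschitz (from $\|\nabla h-I_{n-k}\|\le C(n)\delta$), so the composition $\psi_g\circ h\circ\psi_f^{-1}$ inherits the same biLipschitz constant. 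For (iii), surjectivity of $h$ onto $B_{0.9}(0^{n-k})$ means that for any $y\in B_{0.9}(0^{n-k})$ there exists $y'\in B_{0.99}(0^{n-k})$ with $h(y')=y$, whence $P(\psi_f(y'))=\psi_g(y)$.

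The only delicate point is bookkeeping the $C(n)$ constants as they propagate from $u$ through $h$ and then through the two parameterizations; none of the individual steps is hard, but it requires care to ensure the final biLipschitz constants are of the form $(1\pm C(n)\delta)$ rather than something weaker. Once that is done, the three claims drop out directly from the factorization $P=\psi_g\circ h\circ\psi_f^{-1}$ and the diffeomorphism property of $h$.
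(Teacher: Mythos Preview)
Your approach is essentially identical to the paper's: your map $h$ is exactly the paper's $F(x)=\pi_1(P(x,f(x)))$, and both proofs reduce to applying the quantitative inverse function theorem to this coordinate map, then reading off the three claims from the factorization $P=\psi_g\circ h\circ\psi_f^{-1}$. One small caveat: Lemma~\ref{projection to a graph} as stated only provides $C^1$ control of $u$ (no $\|\nabla^2 u\|$ bound), so your claimed $\|\nabla^2 h\|\le C(n)\delta$ is not directly available---but you never actually use it, and the paper accordingly invokes only the $C^1$ version of Proposition~\ref{quantitative inverse function theorem} (see the remark following Proposition~\ref{quantitative implicit function theorem}).
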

\pf
Define 
\begin{equation}
    F:B_{0.99}(0^{n-k})\to \Rb^{n-k},\,x\to \pi_1(P((x,f(x))),
\end{equation} where $\pi_1$ takes the first $n-k$ coordinates. By the conditions and Lemma \ref{projection to a graph}, $F$ is well-defined and smooth, with $|F(0^{n-k})|,\,||\nabla F - I_{n-k}||\le C(n)\delta$.

By the $C^1$ version of Proposition \ref{quantitative inverse function theorem}, $F$ is a $(1+C(\delta))$-biLipschitz diffeomorphism onto its image, with $B_{0.9}(0^{n-k})\subset F(B_{0.99}(0^{n-k}))$. Thus $P$ is also a $(1+C(n)\delta)$-biLipschitz diffeomorphism onto its image, with $Graph(g|_{B_{0.9}(0^{n-k})})\subset P(Graph(f|_{B_{0.99}(0^{n-k})}))$.
\epf

Now we may prove Proposition \ref{projection between level sets is diffeomorphism}.

\pf[Proof of Proposition \ref{projection between level sets is diffeomorphism}]

Take $x_0\in f^{-1}(c)$. Since $||L(\nabla f)^{-1}(\nabla f-\nabla g)||\le \delta$, we have 
\begin{equation}
    |L(\nabla f)^{-1}L(\nabla g)-I_k|\le C(n)\delta.
\end{equation}
Similar to the proof of Lemma \ref{local level set}, we may assume $L(\nabla f(x_0))=I_k$, $\nabla f(x_0) = (0, I_k)$, $x_0=0^n$, $c = 0^k$, without loss of generality. Under this setting, by Lemma \ref{regular map equivalent definition}, the conditions imply that
\begin{equation}
\begin{split}   
    &f(0^n) = 0,\; \nabla f(0^n) = (0, I_k),\; ||\nabla^2 f||_{L^\infty(B_{1.5r}(0^n))}\le C(n)\delta r^{-1},\\
    |g(0^n&)|\le \delta r,\; |\nabla g(0^n)-(0, I_k)|\le \delta ,\; ||\nabla^2 g||_{L^\infty(B_{1.5r}(0^n))}\le C(n)\delta r^{-1}.
\end{split}
\end{equation}
Apply Proposition \ref{quantitative implicit function theorem} for $f$ and $g$. There exits $u,v:B_{r}(0^{n-k})\to \Rb^k$, with $r^{-1}||u||$, $||\nabla u||$, $r||\nabla^2u||$, $r^{-1}||v||$, $||\nabla v||$, $r||\nabla^2v||\le C(n)\delta$ in $B_{r}(0^n)$, such that 
\begin{equation}
\begin{split}    
    f^{-1}(0^k)\cap (B_{r}(0^{n-k})\times B_{r}(0^k))=Graph(u),\\
    g^{-1}(0^k)\cap (B_{r}(0^{n-k})\times B_{r}(0^k))=Graph(v). 
\end{split}
\end{equation}
By Lemma \ref{projection between graphs}, we have
\bnu
\item $P_{g^{-1}(c)}$ is a $(1+C(n)\delta)$-biLipschitz diffeomorphism from $f^{-1}(c)\cap B_{0.99r}(x_0)$ onto its image.
\item $|P_{g^{-1}(c)}(x_0)-x_0|\le C(n)\delta r$.
\item $g^{-1}(c)\cap B_{0.9r}(x_0)\subset P_{g^{-1}(c)}(f^{-1}(c)\cap B_{0.99r}(x_0)).$
\enu
Since $x_0$ is arbitrary, we've proved that $P_{g^{-1}(c)}$ is a local diffeomorphism with $|P_{g^{-1}(c)}(x)-x|\le C(n)\delta r_x$. It remains to check that $P_{g^{-1}(c)}$ is a globally biLipschitz diffeomorphism. 

For $x,y\in g^{-1}(c)$, if $y\in B_{0.99r}(x)$, then we've proved that 
\begin{equation}\label{biLipschitz of P_g^-1}
    (1-C(n)\delta) |x-y|\le|P_{g^{-1}(c)}(x)-P_{g^{-1}(c)}(y)|\le (1+C(n)\delta) |x-y|.
\end{equation}
Otherwise, $|P_{g^{-1}(c)}(x)-x|\le C(n)\delta r$, $|P_{g^{-1}(c)}(y)-y|\le C(n)\delta r$, $|x-y|\ge 0.99 r$. Sum these up, and we also get (\ref{biLipschitz of P_g^-1}), i.e. $P_{g^{-1}(c)}$ is $(1+C(n)\delta)$-biLipschitz onto its image.

(\ref{biLipschitz of P_g^-1}) implies that $P_{g^{-1}(c)}$ is injective. For any $y\in g^{-1}(c)$, let $x=P_{f^{-1}(c)}(y)$. Then $|x-y|\le C(n)\delta r$. Thus
\begin{equation}
    y\in g^{-1}(c)\cap B_{0.9 r}(x)\subset P_{g^{-1}(c)}(f^{-1}(c)\cap B_{0.99r}(x)).
\end{equation}
Thus $P_{g^{-1}(c)}$ is also surjective. Since $P_{g^{-1}(c)}$ is a bijective local diffeomorphism, it is a diffeomorphism.
\epf

\section{Splitting Reifenberg Condition}

In this short section, we'll introduce the concept of $(k,\delta)$-splitting and $(k,\delta,N)$-splitting Reifenberg sets. For a splitting Reifenberg set $S$ with small $\delta$, the `bad scales' where $S$ resembles multiple planes are quite sparse.

\subsection{Almost Splitting}

In this subsection, we will introduce almost splitting, almost splitting sets and almost splitting directions. We'll see that the selection of almost splitting sets and directions is almost unique if the almost splitting set is finite parallel planes.

First we recall the definition of the exact splitting.

\begin{dfn}

We call a closed set $A\subset \Rb^n$ a $k$-splitting if there exists a $k$-dimensional linear subspace $\hat{l}$ with $\hat{l}+A=A$. We say $\hat{l}$ is a splitting direction of $A$.

\end{dfn}

\begin{rem}

The splitting direction is unique if $A$ is not a $k+1$ splitting, which will always be true in this notes.

\end{rem}

A set $S$ is almost splitting in a ball if it is locally Hausdorff close to an exact splitting space.

\begin{dfn}[$(k,\delta)$-splitting]\label{almost splitting}

We call $S\subset \Rb^n$ a $(k,\delta)$-splitting in $B_r(x)\subset \Rb^n$ if there exists a $k$-splitting $A_{x,r}\subset \Rb^n$ with splitting direction $\hat{l}_{x,r}$ such that
\begin{equation}
    d_H|_{B_r(x)}(S,A_{x,r})\le \delta r.
\end{equation}
Here we call $A_{x,r}$ the $(k,\delta)$-splitting set and $\hat{l}_{x,r}$ the $(k,\delta)$-splitting direction of $S$ in $B_r(x)$.

\end{dfn}

Let's give an example of $(k,\delta)$-splitting and $(k,\delta)$-splitting set.

\begin{figure}
    \centering
    \includegraphics[width=0.5\linewidth]{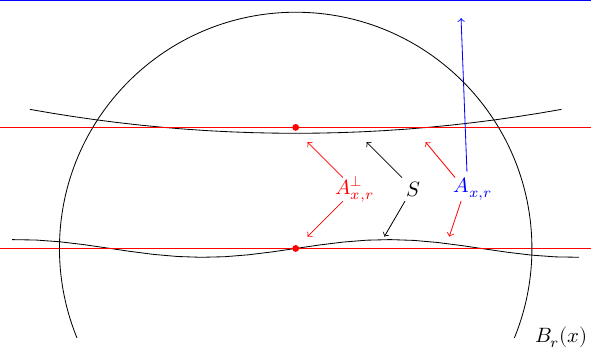}
    \caption{Example \ref{unreduced splitting set} and Lemma \ref{reducing the splitting set}}
    \label{fig3}
\end{figure}
\begin{expl}\label{unreduced splitting set}
    (Figure \ref{fig3}) Take the ball $B_1(0^2)\subset \Rb^2$. Let $f_1,f_2:\Rb\to \Rb$ be two functions with $||f_1||\le \delta$, $||f_2-0.5||\le \delta$. Let $S=Graph(f_1)\cup Graph(f_2)$, $A_{0,1}:=\Rb\times\{0, 0.5, 1.1\}$.
\end{expl}
\begin{proof}[Explanation:] It is clear that $S$ is 
a $(1,\delta)$-splitting in $B_1(0^2)$ with splitting set $A_{0,1}$ and splitting direction $\hat{l}_{0,1}=\Rb\times\{0\}$, i.e., $d_H|_{B_1(0^2)}(S,A_{0,1})\le \delta$. Moreover, $A_{0,1}=\hat{l}+\{(0,0), (0,0.5), (0,1.1)\}$.

We want to measure the size of $S$ in $B_r(x)$. It is clear that $S$ resembles two lines, instead of three. The red lines $\hat{l}+(0,0)$ and $\hat{l}+(0,0.5)$ are related to the size of $S$ since they approximate $S$, while the blue line $\hat{l}+(0,1.1)$ is actually irrelevant to $S$. $A_{0,1}$ remains a $(1,\delta)$-splitting set of $S$ in $B_1(0^2)$ after removing that blue line. Since the distance between the red lines is $0.5$, we can say that the size of $S$ in $B_1(0^2)$ is approximately $0.5$ in some sense.
\end{proof}

From Example \ref{unreduced splitting set} we see that the size of $S$ in $B_r(x)$ can be measured by $A_{0,1}$, but only by its relevant part. We have the following lemma to summarize this phenomenon.

\begin{lem}\label{reducing the splitting set}

(Figure \ref{fig3}) For $S\subset \Rb^n$, $B_r(x)\subset \Rb^n$, assume $A_{x,r}$ is a $(k,\delta)$-splitting set of $S$ in $B_r(x)$ with splitting direction $\hat{l}_{x,r}$. Let $l^\perp_{x,r}=\hat{l}_{x,r}^\perp+x$. Take 
\begin{equation}
A_{x,r}^\perp=\{a\in A_{x,r}\cap l_{x,r}^\perp\,|\,dist(\hat{l}_{x,r}+a,S\cap B_r(x))\le \delta r\}.
\end{equation}
and $\tilde{A}_{x,r}:=\hat{l}_{x,r}+A_{x,r}^\perp$. Then $\tilde{A}_{x,r}$ is also a splitting set of $S$ in $B_r(x)$ with splitting direction $\hat{l}_{x,r}$. Moreover,
\begin{equation} 
A_{x,r}\cap l_{x,r}^\perp\cap B_{(1-\delta)r}(x)\subset A_{x,r}^\perp \subset A_{x,r}\cap l_{x,r}^\perp\cap\overline{B}_{(1+\delta)r}(x).
\end{equation}

\end{lem}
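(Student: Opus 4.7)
The plan is to exploit the $\hat{l}_{x,r}$-translation invariance of $A_{x,r}$: every $a' \in A_{x,r}$ decomposes uniquely as $a' = v + a$ with $v \in \hat{l}_{x,r}$ and $a \in A_{x,r} \cap l_{x,r}^\perp$, obtained by taking $a = x + (a'-x)^\perp$ where $(\cdot)^\perp$ is the orthogonal projection onto $\hat{l}_{x,r}^\perp$. Under this decomposition $\tilde{A}_{x,r} = \hat{l}_{x,r} + A_{x,r}^\perp$ is automatically $\hat{l}_{x,r}$-invariant, so the substantive content of the lemma reduces to the Hausdorff estimate $d_H|_{B_r(x)}(S, \tilde{A}_{x,r}) \le \delta r$ together with the two ball inclusions.

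For the Hausdorff bound, one direction is free: since $\tilde{A}_{x,r} \subset A_{x,r}$, we have $\tilde{A}_{x,r} \cap B_r(x) \subset A_{x,r} \cap B_r(x) \subset B_{\delta r}(S)$ by hypothesis. For the opposite direction, given $y \in S \cap B_r(x)$, pick $a' \in A_{x,r}$ with $|a'-y| \le \delta r$ and decompose $a' = v + a$ as above. The point $a'$ itself, which lies on $\hat{l}_{x,r} + a$, then witnesses $dist(\hat{l}_{x,r} + a, S \cap B_r(x)) \le |a'-y| \le \delta r$, so $a \in A_{x,r}^\perp$ and hence $a' \in \tilde{A}_{x,r}$, giving $y \in B_{\delta r}(\tilde{A}_{x,r})$.

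For the inclusion chain, the key identity is that if $a \in l_{x,r}^\perp$ so that $a-x \in \hat{l}_{x,r}^\perp$, then $dist(\hat{l}_{x,r} + a, \{y\}) = |(a-x)-(y-x)^\perp|$, obtained by minimizing $|u+a-y|^2 = |u-(y-x)^\parallel|^2 + |(a-x)-(y-x)^\perp|^2$ over $u \in \hat{l}_{x,r}$. The outer inclusion follows: if $a \in A_{x,r}^\perp$, take an approximate minimizer $y \in S \cap B_r(x)$ with $|(a-x)-(y-x)^\perp| \le \delta r + \epsilon$, so $|a-x| \le |(y-x)^\perp| + \delta r + \epsilon \le (1+\delta)r + \epsilon$, and sending $\epsilon \to 0$ gives $a \in \overline{B}_{(1+\delta)r}(x)$. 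For the inner inclusion, if $a \in A_{x,r} \cap l_{x,r}^\perp \cap B_{(1-\delta)r}(x)$, the Hausdorff hypothesis applied to $a \in A_{x,r} \cap B_r(x)$ produces $y \in S$ with $|a-y|$ arbitrarily close to $\delta r$; the strict inequality $|a-x|<(1-\delta)r$ leaves enough slack in the triangle inequality to force $y \in B_r(x)$ for these approximate minimizers, and the identity above then certifies $dist(\hat{l}_{x,r}+a, S \cap B_r(x)) \le \delta r$, placing $a$ in $A_{x,r}^\perp$.

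The only real obstacle is bookkeeping around the distinction between open and closed balls and between infima and attained minima in the definitions of $d_H|_{B_r(x)}$ and $dist$. The strict inequality $|a-x| < (1-\delta)r$ hypothesized in the inner inclusion is precisely what supplies the slack needed to promote an approximate minimizer in $S$ to one in $S \cap B_r(x)$, while the $\epsilon \to 0$ argument in the outer inclusion is what converts an open-ball infimum into a closed-ball containment.
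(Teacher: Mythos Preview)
Your proof is correct and follows essentially the same approach as the paper's: both arguments project a nearest point of $A_{x,r}$ onto $l_{x,r}^\perp$ to verify the Hausdorff bound, and both handle the inclusion chain via the same triangle-inequality reasoning. The only cosmetic difference is that for the outer inclusion the paper invokes the identity $d(a,B_r(x)) = dist(\hat{l}_{x,r}+a,\,B_r(x))$ directly (since $a-x \perp \hat{l}_{x,r}$), whereas you work through the pointwise formula $dist(\hat{l}_{x,r}+a,\{y\}) = |(a-x)-(y-x)^\perp|$; your version is more explicit about the $\epsilon$-bookkeeping that the paper leaves implicit.
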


\pf

For any $y\in S\cap B_r(x)$, since $A_{x,r}$ is closed, there is $a\in A_{x,r}^\perp$ with $d(y,\hat{l}+a)\le \delta r$. By definition $a\in A_{x,r}^\perp$. Thus $d_H|_{B_r(x)}(S,\tilde{A}_{x,r})\le \delta r$ and $\tilde{A}_{x,r}$ is also a $(k,\delta)$-splitting set of $S$ in $B_r(x)$.

For $a\in A_{x,r}\cap l_{x,r}^\perp\cap B_{(1-\delta)r}(x)$, $d(a,S)\le \delta r$. Hence $a\in A_{x,r}^\perp$. For $a\in A_{x,r}^\perp$, 
\begin{equation}
    d(a,B_r(x))= dist(\hat{l}+a,B_r(x))\le dist(\hat{l}+a,S\cap B_r(x))\le \delta r.
\end{equation}
Thus $A_{x,r}^\perp\subset A_{x,r}\cap l_{x,r}^\perp\cap \overline{B}_{(1+\delta)r}(x)$.
\epf

\begin{rem}\label{notation of splitting direction and set}

In this article we'll always use the notations in the above discussion.

\bnu 

\item$A_{x,r}$ and $\hat{l}_{x,r}$ denote a pair of splitting set and splitting direction of $S$ in $B_r(x)$, with $A_{x,r}=\hat{l}_{x,r}+A_{x,r}$. The selection is not unique, and we will simply choose an arbitrary pair.

\item We use $l_{x,r}^\perp$ to denote the $(n-k)$-plane that is orthonormal to $\hat{l}_{x,r}$ and passes $x$, and
\begin{equation}
    A_{x,r}^\perp=\{a\in A_{x,r}\cap l_{x,r}^\perp\,|\,d(\hat{l}_{x,r}+a,S\cap B_r(x))\le \delta r\}.
\end{equation}

\enu
    
\end{rem}







It is clear that $\hat{l}_{x,r}$ and $A^\perp_{x,r}$ are not unique. This is inevitable due to the nonrestrictive nature of the definition, but at least we hope them to be almost unique, i.e. all potential choices of $A^\perp_{x,r}$ and $\hat{l}_{x,r}$ should be $C\delta$ close to each other. This can clearly fail when 
$S$ is $(k+1)$-splitting for example. Fortunately, this is true if $A_{x,r}^\perp$ is finite.

\begin{lem}\label{splitting direction is almost unique}

Let $S$ be a closed subset of $\Rb^n$ with $S\cap B_{r/2}(x)\neq \emptyset$. Assume $S$ is a $(k,\delta)$-splitting in $B_r(x)$ with splitting sets $A_{x,r}$, $A'_{x,r}$ and splitting directions $\hat{l}_{x,r}$, $\hat{l}'_{x,r}$, where $|A^\perp_{x,r}|\le N$. Then if $\delta\le \delta(n,N)$, we have $d(\hat{l}_{x,r},\hat{l}'_{x,r})\le 5\delta$, $d_H(A_{x,r}^\perp,A'^\perp_{x,r})\le 7\delta r$.

\end{lem}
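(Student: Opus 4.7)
By scaling and translating, I would reduce to $r=1$, $x=0$. The key preliminary is to apply Lemma \ref{local Hausdorff distance properties}(2) to the two hypotheses $d_H|_{B_1(0)}(S,A)\le \delta$ and $d_H|_{B_1(0)}(S,A')\le \delta$, obtaining $d_H|_{B_{1-\delta}(0)}(A,A')\le 2\delta$. This ``direct'' closeness of $A$ and $A'$ in the slightly shrunken ball is the main workhorse.

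\textbf{Direction closeness.} To show $d(\hat l,\hat l')\le 5\delta$, I fix an arbitrary unit $\hat v\in \hat l$ and prove $d(\hat v,\hat l')\le 5\delta$. Using the hypothesis $S\cap B_{1/2}(0)\ne\emptyset$, pick $p\in S\cap B_{1/2}(0)$ and take $v_0\in A^\perp$ with $d(p,\hat l+v_0)\le \delta$, so $|v_0|\le 1/2+\delta$. Within the sheet $\hat l+v_0$ I would consider the segment $\sigma=\{v_0+t\hat v : |t|\le T\}$, with $T=\sqrt{(1-\delta)^2-|v_0|^2}$ bounded below by a fixed positive constant. Both endpoints $q_\pm = v_0\pm T\hat v$ lie in $A\cap B_{1-\delta}(0)$, so there are $a'_\pm\in A'$ with $|a'_\pm - q_\pm|\le 2\delta$; decompose $a'_\pm = l'_\pm + v'_\pm$ with $l'_\pm\in \hat l'$ and $v'_\pm\in A'^\perp$. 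If $v'_+=v'_-$, then $a'_+-a'_-\in \hat l'$ and $|(a'_+-a'_-) - 2T\hat v|\le 4\delta$, so $d(\hat v,\hat l')\le 2\delta/T \le 5\delta$.

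\textbf{Main obstacle.} The hardest step is forcing $v'_+=v'_-$, since $A'^\perp$ is not assumed finite and could in principle have clustered sheets. My plan is to first establish $|A'^\perp|\le C(n,N)$ via a symmetric argument: each $v'\in A'^\perp$ has an $S$-witness close to $\hat l'+v'$ (Lemma \ref{reducing the splitting set}), which is in turn close to one of the $\le N$ sheets of $A$, defining a map $A'^\perp\to A^\perp$; two elements mapping to the same sheet $\hat l+v$ must have both $\hat l'+v'_1,\hat l'+v'_2$ passing within $2\delta$ of $\hat l+v$, which constrains them to a bounded $\hat l^\perp$-region, yielding a dimension-count bound on the fiber. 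Once $A'^\perp$ is finite, either its minimum separation $\eta'$ is already larger than $2T+4\delta$ (forcing $v'_+=v'_-$ from the bound $|v'_+-v'_-|\le |a'_+-a'_-|\le 2T+4\delta$), or we shrink $T$ to $(\eta'-4\delta)/2$ while keeping $2\delta/T$ under $5\delta$ for small enough $\delta$.

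\textbf{Perpendicular closeness.} For $d_H(A^\perp,A'^\perp)\le 7\delta r$, assuming direction closeness, I would match each $v\in A^\perp$ to a $v'\in A'^\perp$ using a shared $S$-witness: by Lemma \ref{reducing the splitting set} find $s\in S\cap B_1(0)$ with $d(s,\hat l+v)\le \delta$, then find $v'\in A'^\perp$ with $d(s,\hat l'+v')\le \delta$ (using that $s$ is within $\delta$ of $A'$, and checking the chosen sheet is in the reduced $A'^\perp$). One has $|\pi_{\hat l^\perp}(s)-v|\le 2\delta$ and $|\pi_{\hat l'^\perp}(s)-v'|\le 2\delta$; since $|s|\le 1$ and $d(\hat l,\hat l')\le 5\delta$, the two projections $\pi_{\hat l^\perp}(s)$ and $\pi_{\hat l'^\perp}(s)$ differ (as vectors in $\Rb^n$) by at most $5\delta$, so $|v-v'|\le 7\delta$. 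The symmetric bound gives the $d_H$ estimate. The main expected difficulty, as above, is the bound on $|A'^\perp|$ and the tight tracking of constants through the different perpendicular subspaces $\hat l^\perp$ and $\hat l'^\perp$.
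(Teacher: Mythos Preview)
Your Step 2 (perpendicular closeness) is correct and essentially identical to the paper's argument: project a shared $S$-witness to both $\hat l^\perp$ and $\hat l'^\perp$, and combine the two $\delta$-bounds with the $5\delta$ bound on $|P^\perp - P'^\perp|_{op}$ from Lemma \ref{equivalent definition of distance of subspaces}.

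Your Step 1 (direction closeness), however, has a real gap. The ``minimum separation'' fix does not work: even granting $|A'^\perp|<\infty$, the separation $\eta'$ of $A'^\perp$ has no lower bound in terms of the data --- two sheets of $A'$ may sit at distance $10\delta$, say. In that case shrinking $T$ to $(\eta'-4\delta)/2 = 3\delta$ makes $2\delta/T = 2/3$, nowhere near $5\delta$. More generally, you need $T\ge 2/5$ for the final inequality, but your mechanism for forcing $v'_+=v'_-$ only works when $T < (\eta'-4\delta)/2$; these are incompatible once $\eta'$ is small. Separately, your proposed bound on $|A'^\perp|$ via a map $A'^\perp\to A^\perp$ and a ``dimension count'' on fibers is circular: without already knowing $d(\hat l,\hat l')$ is small, a single sheet $\hat l+v$ restricted to $B_1$ can pass within $2\delta$ of many sheets of $A'$ (roughly $d/\eta'$ of them), so the fiber is not controlled.

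The paper handles Step 1 by a two-stage bootstrap that avoids both issues. First, a \emph{volume argument}: pick one sheet $l'\subset A'$ meeting $B_{1/2}(0)$; it has $\mathcal H^k(l'\cap B_{1-\delta}(0))\ge c(n)$, yet $l'\cap B_{1-\delta}(0)\subset B_{2\delta}(A)$ and each of the $N$ slabs $B_{2\delta}(\hat l + a)\cap l'$ has $\mathcal H^k$-measure at most $C(n)\,\delta/d$. This gives the coarse bound $d\le C(n)N\delta$ with no reference to $|A'^\perp|$. Second, a \emph{refinement using finiteness of $A^\perp$}: choose coordinates so that $\hat l=\Rb^k\times\{0\}$ and the witness direction $v\in\hat l'$ has $v_{k+1}=d$; pick $a\in A^\perp$ with $|a|\le 0.51$ maximizing the $(k{+}1)$-coordinate among points of $A^\perp$ within $10C(n)N\delta$ of $a$. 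Approximate $a$ by $y\in A'$, walk to $w=y+0.8v\in A'\cap B_{1-\delta}(0)$, and approximate $w$ by $z\in \hat l+b\subset A$. The coarse bound guarantees $|a-b|\le 10C(n)N\delta$, so maximality gives $b_{k+1}\le a_{k+1}$; but $b_{k+1}=z_{k+1}\ge a_{k+1}+0.8d-4\delta$, forcing $d\le 5\delta$. The crucial missing idea in your approach is exactly this coarse $d\le C(n)N\delta$ bound, which lets one localize in $A^\perp$ (not $A'^\perp$) and exploit the given finiteness $|A^\perp|\le N$.
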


\pf
Without loss of generality, we assume $B_r(x)=B_1(0^n)$. Denote $A_{x,r}, A'_{x,r},A_{x,r}^\perp, A_{x,r}'^\perp,\hat{l}_{x,r},\hat{l}'_{x,r}$ as $A,A',A^\perp, A'^\perp,\hat{l},\hat{l}'$ respectively.

\textbf{Step 1:} We prove $d(\hat{l},\hat{l}')\le 5\delta$.

From the conditions we know $d_H|_{B_{1-\delta}(0^n)}(A,A')\le 2\delta$. Denote $d(\hat{l},\hat{l}')$ as $d$. First we give a weaker bound of $d$. Note that for any $k$-planes $l\subset A$, $l'\subset A'$. \begin{equation}
\Hc^k(l\cap B_1(0^n) \cap B_{2\delta}(l'))\le C(n)d^{-1}\delta.\end{equation} 
Fix some $l'\subset A'$, 
\begin{equation}
l'\cap B_{1-\delta}(0^n)\subset B_{2\delta}(A).
\end{equation} 
Since $S\cap B_{1/2}(0^n)\neq \emptyset$, $\Hc^k(l'\cap B_{1-\delta}(0^n))\ge C(n)$. Thus $NC(n)d^{-1}\delta \ge C(n)$, i.e. 
\begin{equation}\label{first bound of d of splitting directions}
    d\le C(n)N\delta.
\end{equation}

\begin{figure}\label{fig4}
\centering
\includegraphics[]{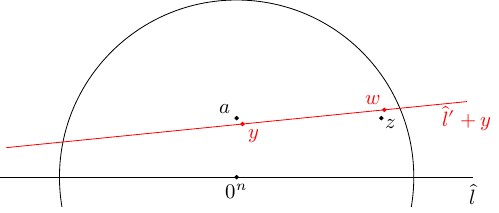}
\caption{proof of Lemma \ref{splitting direction is almost unique}, Step 1. $z_{k+1}\le a_{k+1}$.}
\end{figure}

Next we try to refine the estimate. (Figure \ref{fig4}) 

By Lemma \ref{equivalent definition of distance of subspaces}, there exist $v\in \hat{l}'\cap \bar{B}_1(0^n)$ with $d(v,\hat{l})=d$. We may pick a coordinate system such that $\hat{l}=\Rb^k\times\{0^{n-k}\}$ and $v=(\sqrt{1-d^2},0,\dots,0,d,0,\dots,0)$, where the $k+1$-th coordinate $v_{k+1}=d$. Without loss of generality we may assume $A^\perp\subset \{0^k\}\times\Rb^{n-k}$. For $\delta\le \delta(n,N)$ small enough, note that $|A^\perp|\le N$ and $S\cap B_{1/2}(0^n)\neq \emptyset$, we can find some $a\in A^\perp$ satisfying

\bnu

\item $|a|\le 0.51$,

\item for any $b\in A^\perp$ with $d(a,b)\le 10C(n)N\delta$, where $C(n)$ is from (\ref{first bound of d of splitting directions}), we have $a_{k+1}\ge b_{k+1}$.

\enu

Since $a\in A^\perp\cap B_{1-\delta}(0^n)$, there is some $y\in A'$ with $d(y,a)\le 2\delta$. Then $w:=y+0.8v\in A\cap B_{1-\delta}(0^n)$. Again, there exist $z\in \hat{l}+b\subset A$ with $d(w,z)\le 2\delta$ for some $b\in A^\perp$. Note that $d(a,b)\le 10 C(n)N\delta$, and 
\begin{equation}
b_{k+1}= z_{k+1}\ge w_{k+1}-2\delta=y_{k+1}+0.8d-2\delta\ge a_{k+1}+0.8d-4\delta.
\end{equation}
Since $a_{k+1}\le b_{k+1}$, it follows that $d\le 5\delta$.

\textbf{Step 2:} We prove $d_H(A^\perp,A'^\perp)\le 7\delta$.

Take $a\in A^\perp$. Then $d(S\cap B_r(x),\hat{l}+a)\le \delta$. Take $z\in S\cap B_r(x)$ with $d(z,\hat{l}+a)\le \delta + \delta_1$ where $\delta_1<<\delta$. There exists $a'\in A'^\perp$ such that $d(z,\hat{l}'+a')\le \delta$. Let $P^\perp$, $P'^\perp$ be the orthogonal projection maps from $\Rb^n$ to $\hat{l}^\perp$, $\hat{l}'^\perp$. By 1. and Lemma \ref{equivalent definition of distance of subspaces}, 
\begin{equation}d(\pi^\perp(z),\pi'^\perp(z))\le ||P^\perp-P'^\perp||_{op}|z|\le d(\hat{l},\hat{l}')\le 5\delta.\end{equation}
Note that $d(P^\perp(z),a)\le \delta + \delta_1$, $d(P'^\perp(z),a')\le \delta$. We have $d(a,a')\le 7\delta +\delta_1$. Let $\delta_1\to 0$, we have $d_H(A^\perp,A'^\perp)\le 7\delta$.

\epf

\subsection{$(k,\delta,N)$-Splitting Reifenberg Condition}

In this article, we are concerned with the sets that satisfy the 
$(k,\delta,N)$-splitting Reifenberg condition, which is an analog of the $(k,\delta)$-Reifenberg condition.

\begin{dfn}[splitting Reifenberg condition]\label{splitting reifenberg condition}

For a closed set $S\subset B_2(0^n)$, we say $S$ satisfies the $(k,\delta,N)$-splitting Reifenberg condition if the following holds.
\bnu

\item $d_H|_{B_2(0^n)}(S,\Rb^k\times\{0^{n-k}\})\le 2\delta.$
\item For each $B_r(x)\subset B_2(0^n)$, $S$ is a $(k,\delta)$-splitting with splitting set $A_{x,r}$, such that $|A_{x,r}^\perp|\le N$.
\enu
\end{dfn}

\begin{rem}

Assume $S\subset B_2(0^n)$ is a $(k,\delta,N)$-splitting Reifenberg set. For $B_r(x)\subset B_2(0^n)$, as in Remark \ref{notation of splitting direction and set}, we will always use $\hat{l}_{x,r}$, $A_{x,r}$ to represent a pair of splitting direction and splitting set. Moreover, we will always assume $|A_{x,r}^\perp|\le N$.
    
\end{rem}

For a sequence of nested balls with rapidly decreasing scales, we can observe multiple sheets of $S$ at no more than $N$ of them before passing the scale $\delta$. We call these scales `bad scales' informally.

\begin{prop}\label{only finite scales have large width}

Let $S$ be a $(k,\delta,N)$-splitting Reifenberg set. Let $r_0>0$, $r_i=2^{-mi}r_0$, $i = 0, 1, \dots,M$ be a sequence of scales for some fixed $m\in \Nb$ . Take a sequence of nested balls $B_2(0^n)\supset B_{r_0}(x_0)\supset B_{r_1}(x_1)\supset\dots\supset B_{r_M}(x_M)$ with $S\cap B_{r_i/2}(x_i)\neq \emptyset$ for each $i$. Assume $\delta<\delta(n,N,m,M)$ is sufficiently small. Then there are at most $N$ different $0\le i\le M$ with
\begin{equation}\diam{A^\perp_{x_i,r_i}}\ge 2^{-m+2} r_i.\end{equation}

\end{prop}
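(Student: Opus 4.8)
The plan is to track how the almost-splitting sets $A^\perp_{x_i,r_i}$ evolve as we pass from scale $r_i$ to scale $r_{i+1}=2^{-m}r_i$, using the almost-uniqueness Lemma \ref{splitting direction is almost unique} to compare splitting data on the smaller ball with the restriction of the splitting data from the larger ball. The key point is a \emph{monotonicity of diameter}: if $B_{r_{i+1}}(x_{i+1})\subset B_{r_i}(x_i)$, then $A_{x_i,r_i}$ restricted to $B_{r_{i+1}}(x_{i+1})$ is still a $(k,C\delta)$-splitting set there (after reducing as in Lemma \ref{reducing the splitting set}), so by Lemma \ref{splitting direction is almost unique} with parameter $C\delta$ we get $\diam{A^\perp_{x_{i+1},r_{i+1}}}\le \diam{A^\perp_{x_i,r_i}}+C(n,N)\delta r_i$; and conversely the reduced part of $A_{x_i,r_i}$ that is genuinely seen by $S$ inside $B_{r_{i+1}}(x_{i+1})$ has diameter at most, roughly, $2 r_{i+1}$ plus the width carried over from scale $i+1$. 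The upshot I want to extract is that the ``width'' $w_i:=\diam{A^\perp_{x_i,r_i}}/r_i$ cannot shrink by more than a controlled additive $C(n,N)\delta$ at each step, while a scale with $w_i\ge 2^{-m+2}$ forces a genuine drop: if $S$ resembles at least two parallel sheets at scale $r_i$ that are separated by $\ge 2^{-m+2}r_i$, then inside the much smaller nested ball $B_{r_{i+1}}(x_{i+1})$ (radius $2^{-m}r_i$) at most one of those sheets can be within $\delta r_{i+1}$ of $S\cap B_{r_{i+1}}(x_{i+1})$, unless two of them are themselves within $\sim r_{i+1}$ of each other — which, after reduction, removes a sheet from $A^\perp$. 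So either $|A^\perp_{x_{i+1},r_{i+1}}|<|A^\perp_{x_i,r_i}|$ (counted with the separation structure) or the bad scale ``resolves'' a genuinely separated pair.

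Concretely, here are the steps I would carry out. (1) Normalize $B_{r_i}(x_i)=B_1(0^n)$ and set up the comparison: show $A_{x_{i-1},r_{i-1}}\cap B_{r_i}(x_i)$, after the reduction of Lemma \ref{reducing the splitting set}, is a $(k,C(n)2^m\delta)$-splitting set of $S$ in $B_{r_i}(x_i)$, and invoke Lemma \ref{splitting direction is almost unique} (valid once $\delta<\delta(n,N,m)$) to conclude $d(\hat l_{x_{i-1},r_{i-1}},\hat l_{x_i,r_i})\le C(n,N,m)\delta$ and a Hausdorff comparison of the rescaled $A^\perp$'s up to error $C(n,N,m)\delta r_i$. (2) Define a discrete quantity, e.g. the number of ``clusters'' $n_i$ of $A^\perp_{x_i,r_i}$ after grouping points that are within $\tfrac12\cdot 2^{-m+2}r_i$ of each other (so $n_i\le|A^\perp_{x_i,r_i}|\le N$), and show using Step 1 that $n_{i+1}\le n_i$: any two clusters at scale $r_{i+1}$ come from clusters at scale $r_i$, and distinct scale-$(i+1)$ clusters come from distinct scale-$i$ clusters because the error $C(n,N,m)\delta r_i\ll r_{i+1}$ for $\delta$ small. (3) Show that if $\diam{A^\perp_{x_i,r_i}}\ge 2^{-m+2}r_i$ then $A^\perp_{x_i,r_i}$ has at least two clusters at scale $r_i$, but at scale $r_{i+1}=2^{-m}r_i$ the ball $B_{r_{i+1}}(x_{i+1})$ has radius only $2^{-m}r_i$, so $A^\perp_{x_{i+1},r_{i+1}}$ — being contained (up to $C\delta r_{i+1}$) in $\overline B_{(1+\delta)r_{i+1}}(x_{i+1})$ — lies in a single scale-$r_i$ cluster, whence $n_{i+1}<n_i$. (4) Since $n_0\le N$ and $n_i$ is nonincreasing in $\{0,\dots,N\}$-valued integers, it can strictly decrease at most $N$ times, so there are at most $N$ bad scales; choose $\delta(n,N,m,M)$ small enough that all the ``$\ll$'' comparisons above hold through $M$ steps (the accumulated error is $\le MC(n,N,m)\delta$).

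The main obstacle I expect is Step 3 — precisely formalizing the statement that a genuinely separated multi-sheet picture at scale $r_i$ collapses to a single sheet at the $2^m$-times-smaller scale $r_{i+1}$, and pinning down the right discrete invariant so that it is simultaneously (a) bounded by $N$, (b) monotone nonincreasing under nesting (Step 2), and (c) strictly decreasing at every bad scale (Step 3). The tension is that the cluster threshold must be coarse enough that the error terms from Lemma \ref{splitting direction is almost unique} cannot split a cluster, yet fine enough that a scale with $\diam{A^\perp}\ge 2^{-m+2}r_i$ really does register as ``more than one cluster'' — this is why the $2^{-m+2}$ in the hypothesis and the factor $2^{-m}$ between consecutive scales appear, and choosing $m$ (and then $\delta$) to make these inequalities compatible is the crux. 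Everything else is bookkeeping with the local-Hausdorff-distance properties of Lemma \ref{local Hausdorff distance properties} and the reduction Lemma \ref{reducing the splitting set}.
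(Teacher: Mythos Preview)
Your overall strategy—find an integer-valued invariant bounded by $N$, show it is nonincreasing along the nested sequence, and show it strictly drops at each bad scale—is exactly right, and is what the paper does. However, your specific invariant $n_i$ (the cluster count of $A^\perp_{x_i,r_i}$ at threshold $\tfrac12\cdot 2^{-m+2}r_i$) does \emph{not} satisfy Step~2. The threshold scales with $r_i$: two points of $A^\perp_{x_{i+1},r_{i+1}}$ separated by, say, $2^{-m+2}r_{i+1}$ lie in distinct scale-$(i{+}1)$ clusters, yet their separation $2^{-m+2}r_{i+1}=2^{-2m+2}r_i\ll 2^{-m+1}r_i$ is far below the scale-$i$ threshold, so they land in the \emph{same} scale-$i$ cluster. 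Thus $n_{i+1}>n_i$ can occur—and this is exactly what happens whenever a single sheet at scale $i$ resolves into several at scale $i{+}1$. The tension you flag in your last paragraph is real, but it bites in Step~2, not Step~3.

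The paper sidesteps this by a cleaner choice of invariant. Rather than comparing consecutive $(k,\delta)$-splitting sets via Lemma~\ref{splitting direction is almost unique} at every step, it fixes the \emph{single} splitting set $A=A_{x_0,r_0}$ from the top scale and observes that $A$ is a $(k,2^{mi}\delta)$-splitting set in every $B_{r_i}(x_i)$ (same approximation error $\delta r_0$, shrinking ball). The reduced sets $\tilde A^\perp_i$ from Lemma~\ref{reducing the splitting set}, all taken with the fixed direction $\hat l$, are then genuinely \emph{nested}, $\tilde A^\perp_0\supset \tilde A^\perp_1\supset\cdots$, so $|\tilde A^\perp_i|$ is automatically nonincreasing—no clustering needed. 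At a bad scale one has $\diam{\tilde A^\perp_i}\ge 3\cdot 2^{-m}r_i>2(1+\delta)r_{i+1}\ge\diam{\tilde A^\perp_{i+1}}$ (using $\tilde A^\perp_{i+1}\subset \overline B_{(1+\delta)r_{i+1}}(x_{i+1})$), which together with nesting forces $|\tilde A^\perp_{i+1}|<|\tilde A^\perp_i|$. Lemma~\ref{splitting direction is almost unique} is invoked only \emph{once} at the end, with parameter $2^{mM}\delta$ (whence the dependence $\delta<\delta(n,N,m,M)$), to transfer the diameter bound from the $\tilde A^\perp_i$ back to the actual $A^\perp_{x_i,r_i}$.
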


\pf

There exists a $k$-splitting set $A=A_{x_0,r_0}$ with $|A|\le N$ such that
\begin{equation}
    d_H|_{B_r(x_0)}(S,A)\le \delta r_0.
\end{equation}
Thus $d_H|_{B_{r_i}(x_i)}(S,A)\le \delta r_0 = 2^{mi}\delta r_i$ for each $i$, i.e., $A$ is a $(k,2^{mi}\delta)$-splitting set for $S$ in $B_{r_i}(x_i)$. From the construction of Lemma \ref{reducing the splitting set}, we know $A_{x_0,r_0}^\perp\supset A_{x_1,r_1}^\perp\supset\dots\supset A_{x_M,r_M}^\perp$. For each $i$, if $\diam{A_{x_i,r_i}^\perp}\ge 3\cdot 2^{-m}r_i$, then $\diam{A_{x_i,r_i}^\perp}>\diam{A_{x_{i+1},r_{i+1}}^\perp}$, thus $|A_{x_{i+1},r_{i+1}}^\perp|<|A_{x_i,r_i}^\perp|$. Since $|A_{x_0,r_0}^\perp|\le N$, there are at most $N$ different $i$ with $\diam{A_{x_i,r_i}^\perp}\ge 3\cdot 2^{-m} r_i$.

For each $i\le M$, let $A'_{x_i,r_i}$ be the $(k,\delta)$-splitting set of $S$ in $B_{r_i}(x_i)$. Then it is also a $(k,2^{mi})$-splitting set. By lemma \ref{splitting direction is almost unique}, $|\diam{A^\perp_{x_i,r_i}}-\diam{A'^\perp_{x_i,r_i}}|\le 7\cdot2^{mM}\delta r_i<2^{-m} r_i$ for $\delta<\delta(n,N,m,M)$. Thus there are at most $N$ different $i\le M$ such that $\diam{A'^\perp_{x_i,r_i}}\ge 2^{-m+2}r_i$.

\epf

The splitting direction $\hat{l}_{x,r}$ and the splitting set $A^\perp_{x,r}$ is stable as we move the ball $B_r(x)$ or change the scale $r$. By Lemma \ref{splitting direction is almost unique}, the error is comparable to $\delta$.

\begin{cor}\label{comparison of splitting directions}

Let S be a $(k,\delta,N)$-Reifenberg set. Let $B_{r}(x)$, $B_{r'}(x')\subset B_2(0^n)$ be two balls with $S\cap B_{r/2}(x)\neq \emptyset$, $S\cap B_{r'/2}(x')\neq \emptyset$. Assume $\delta<\delta(n,N)$ is sufficiently small.

\bnu

\item If $B_{r'}(x')\subset B_r(x)$, $r'=r/2$, then $d(\hat{l}_{x,r},\hat{l}_{x',r'})\le 10\delta$.

\item If $B_{r'}(x')\subset B_r(x)$, $r'= 2^{-i}r$ and $B_{2r}(x)\subset B_2(0^n)$, then $d(\hat{l}_{x,r},\hat{l}_{x',r'})\le 10(i+2)\delta$.

\item If $B_r(x)\cap B_r(x')\neq \emptyset$ and $B_{4r}(x)\subset B_2(0^n)$, then $d(\hat{l}_{x,r},\hat{l}_{x',r'})\le 40\delta$.

\enu

\end{cor}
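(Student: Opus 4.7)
The plan is to reduce each of the three statements to Lemma~\ref{splitting direction is almost unique} by exhibiting, on a common ball, two splitting sets whose directions we want to compare, chaining through intermediate balls by the triangle inequality when necessary.

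For (1), the splitting set $A_{x,r}$ also serves as a splitting of $S$ in the smaller ball $B_{r'}(x')$: since $d_H|_{B_r(x)}(S, A_{x,r}) \le \delta r = 2\delta r'$, we obtain $d_H|_{B_{r'}(x')}(S, A_{x,r}) \le 2\delta r'$. As $A_{x,r}$ is a union of at most $N$ translates of $\hat{l}_{x,r}$, its perp set computed about $x'$ also has cardinality at most $N$. Applying Lemma~\ref{splitting direction is almost unique} in $B_{r'}(x')$ to the two splittings $A_{x,r}$ and $A_{x',r'}$, taking the worse scale $2\delta$, yields $d(\hat{l}_{x,r}, \hat{l}_{x',r'}) \le 5\cdot 2\delta = 10\delta$.

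For (2), I would iterate (1) along a chain $B_r(x) = B_{r_0}(x_0) \supset B_{r_1}(x_1) \supset \cdots \supset B_{r_i}(x_i) = B_{r'}(x')$ with $r_j = 2^{-j}r$ and $x_j$ on the segment from $x$ to $x'$. Since $|x - x'| \le r - r'$, a simple linear interpolation (e.g.\ $x_j = x + \tfrac{1 - 2^{-j}}{1 - 2^{-i}}(x'-x)$) makes each $B_{r_j}(x_j)$ contain $B_{r'}(x')$ and lie in $B_{r_{j-1}}(x_{j-1})$. Each consecutive step contributes $10\delta$ by (1), so summing gives $10i\delta \le 10(i+2)\delta$. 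The hypothesis $B_{2r}(x) \subset B_2(0^n)$ keeps all intermediate balls inside $B_2(0^n)$.

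For (3), I pass to a common containing ball. Since $B_r(x) \cap B_r(x') \neq \emptyset$, we have $|x - x'| \le 2r$; setting $x_m = \tfrac{1}{2}(x+x')$, both $B_r(x)$ and $B_r(x')$ lie in $B_{2r}(x_m)$, and $B_{2r}(x_m) \subset B_{3r}(x) \subset B_{4r}(x) \subset B_2(0^n)$. The ratio $1/2$ lets me apply (1) twice to obtain $d(\hat{l}_{x,r}, \hat{l}_{x_m,2r}) \le 10\delta$ and $d(\hat{l}_{x',r}, \hat{l}_{x_m,2r}) \le 10\delta$, and the triangle inequality gives $20\delta \le 40\delta$. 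The one technical nuisance I expect to have to address is the nonemptiness hypothesis $S \cap B_{\rho/2}(\cdot) \neq \emptyset$ required to invoke Lemma~\ref{splitting direction is almost unique} on each intermediate ball: this should follow from the global density bound $d_H|_{B_2(0^n)}(S, \Rb^k \times \{0^{n-k}\}) \le 2\delta$ in Definition~\ref{splitting reifenberg condition}(1), which guarantees that any ball well inside $B_2(0^n)$ whose inner half meets $\Rb^k \times \{0^{n-k}\}$ captures an $S$-point within $2\delta$, and this is easily arranged for all intermediate balls in the chains above.
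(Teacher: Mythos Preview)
Your argument for part~(1) is exactly the paper's, and your part~(3) via the midpoint ball $B_{2r}(x_m)$ is a slight variant that works (and in fact gives $20\delta$ rather than the paper's $40\delta$ via the longer chain $B_r(x)\subset B_{2r}(x)\subset B_{4r}(x)\supset B_{2r}(x')\supset B_r(x')$).

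The issue is in part~(2). Your linear interpolation of centers does give the nesting $B_{r_j}(x_j)\subset B_{r_{j-1}}(x_{j-1})$, but your appeal to the global density bound does not close the nonemptiness hypothesis $S\cap B_{r_j/2}(x_j)\neq\emptyset$ for the intermediate balls. Writing $x=(a,b)$, $x'=(a',b')$ with $b,b'\in\Rb^{n-k}$, the assumptions only give $|b|<r/2+2\delta$ and $|b'|<r'/2+2\delta$; with your interpolation weights $t_j=\tfrac{1-2^{-j}}{1-2^{-i}}$ one computes $(1-t_j)r+t_jr'=r_j$, so the perpendicular distance from $x_j$ to $\Rb^k\times\{0\}$ is only bounded by $r_j/2+2\delta$, and the nearest $S$-point you can produce from the density bound lands in $B_{r_j/2+4\delta}(x_j)$, not $B_{r_j/2}(x_j)$. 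In the borderline case where $x$ and $x'$ sit nearly at distance $r/2$, $r'/2$ from $S$, the hypothesis genuinely fails along the chain.

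The paper sidesteps this by keeping all intermediate balls concentric at $x'$: the chain $B_{r'}(x')\subset B_{2r'}(x')\subset\cdots\subset B_r(x')$ inherits nonemptiness for free from $S\cap B_{r'/2}(x')\neq\emptyset$. It then spends two additional steps through $B_{2r}(x)$ (using $B_r(x')\subset B_{2r}(x)$ and $B_r(x)\subset B_{2r}(x)$, which is where the hypothesis $B_{2r}(x)\subset B_2(0^n)$ is used) to connect back to $\hat{l}_{x,r}$. That is the origin of the ``$+2$'' in $10(i+2)\delta$; your direct chain would have given $10i\delta$ if the nonemptiness went through.
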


\pf

\bnu

\item

Let $A_{x,r}$, $A_{x',r'}$ be the respective splitting sets. By Lemma \ref{local Hausdorff distance properties}, $d_H|_{B_{r'}(x')}(S,A_{x,r})\le \delta r=2\delta (r/2)$. Thus $\hat{l}_{x,r}$ is a $(k,2\delta)$-splitting direction of $S$ in $B_{r'}(x')$ Apply Lemma \ref{splitting direction is almost unique} one sees $d(\hat{l}_{x,r},\hat{l}_{x',r'})\le 10\delta$.

\item Note that $d$ for subspaces is a metric. Apply 1. repeatedly for the sequence of balls $B_{2^{-i}r}(x')\subset B_{2^{-i+1}r}(x')\subset\dots\subset B_r(x')$, and we have $d(\hat{l}_{x',r'},\hat{l}_{x',r})\le 10i\delta$. Note that $B_{r}(x')\subset B_{2r}(x)$, $B_r(x)\subset B_{2r}(x)$. We have $d(\hat{l}_{x,r},\hat{l}_{x,2r})\le 10\delta$, $d(\hat{l}_{x',r},\hat{l}_{x,2r})\le 10\delta$. Combining all above, we have $d(\hat{l}_{x,r},\hat{l}_{x',r'})\le 10(i+2)\delta$.

\item Similar to above, consider the inclusion chains $B_r(x)\subset B_{2r}(x)\subset B_{4r}(x)$ and $B_{r}(x')\subset B_{2r}(x')\subset B_{4r}(x)$. Apply 1. repeatedly and the conclusion follows.

\enu

\epf

Apparently Corollary \ref{comparison of splitting directions} does not include all possible cases in practice, and there are many other settings in which we can compare the splitting directions. In general, for any two balls, as long as their radii and the distance between them are comparable, their splitting directions will also be comparable.

\section{The Global Map $\Phi$}

In this section we will prove the main theorem. And in the same process we will also see many interesting constructions and results. Given a splitting Reifenberg set $S$, we will construct a map $\Phi:\Rb^n\to \Rb^k$ such that the level sets of $\Phi$ is almost perpendicular to the splitting directions of $S$ when observed at each scale. Such $\Phi$ will be defined as the limit of a sequence of smooth $\Phi_i$, for which we can prove effective $C^2$ control. Consequently, $\Phi$ and level sets of $\Phi$ will both have H\"older structures. We will see that these results imply the main theorem due to fact that $S$ is closed and locally $(k,\delta)$-splitting.

\subsection{Construction of $\Phi_i$}\label{construction of phi_i}

In this section, we assume $S\subset B_2(0^n)$ is a $(k,\delta,N)$-splitting Reifenberg set. Let $r_i=2^{-mi}$ for some large integer $m$ that will be specifies later. We will construct $\Phi_i:\Rb^n\to \Rb^k$ such that the level sets of $\Phi_i$ is almost perpendicular to the splitting direction at scale $r_i$ near $S$.

This will be an inductive process. Without loss of generality, we may assume the splitting direction of $S$ in $B_2(0^n)$ is $\Rb^k\times\{0^{n-k}\}$. let $\Phi_0(x,y)=x$ be the coordinate projection and we always use $\Phi_{i}$ to construct $\Phi_{i+1}$. Take a good cover of $S\cap B_{1.99}(0^n)$ with balls of size $r_{i+1}$. We will first take the average of linear approximations of $\Phi_i$ in these balls, and then use another cutoff function to glue the new part with the background $\Phi_{i}$.

\begin{cons}
    A canonical cutoff function for all balls.
\end{cons}

Let $\phi:[0,1]\to [0,1]$ be a $C^\infty$ function with 
$\phi=1$ in $[0, 0.4)$, decreasing in $[0.4, 0.48]$ and $\phi=0$ in $(0.48,1]$.
We may assume $|\nabla \phi|, |\nabla^2 \phi|\le C$. For any ball $B_r(x)\subset \Rb^n$, define \begin{equation}\phi_{x,r}:=\phi(\frac{|x-r|}{r}).\end{equation} Then $\phi_{x,r}\in C_c^\infty(B_r(x))$, $|\nabla \phi|\le C(n)r^{-1}$, $|\nabla^2 \phi|\le C(n)r^{-2}$.

\begin{cons}\label{covering for all scales}

A covering for all scales.

\end{cons}

Let $S\subset \Rb^n$ be a locally $(k,\delta,N)$-splitting Reifenberg set. Let $r_i=2^{-mi}$, $i=1,2,\dots$ be a collection of scales, where $m$ is some integer to be decided later. For each $r_i$, we pick a collection of balls $\{B_{r_i}(x_{i,a})\}$ such that
\bnu
\item $x_{i,a}\in S\cap B_{1.99}(0^n)$,

\item $S\cap B_{1.99}(0^n)\subset \bigcup_{a}B_{0.01r_i}(x_{i,a})$,
\item $B_{0.002r_i}(x_{i,a})$ are mutually disjoint.
\enu

Then it is straightforward to see

\bnu
\item $B_{0.39r_i}(S\cap B_{1.99}(0^n))\subset \bigcup_a B_{0.4r_i}(x_{i,a})$.
\item The intersection number of the collection of balls $B_{r_i}(x_{i,a})$ is bounded above by $C(n)$.
\item $\bigcup_aB_{r_{i+1}}(x_{i+1,a})\subset\bigcup_a B_{0.02r_i}(x_{i,a})$.
\enu

\begin{cons}
    A partition of unity.
\end{cons}

For each $i,a$, let $\phi_{i,a}=\phi_{x_{i,a},r_i}$ be the respective cutoff function. Grant each $x_{i,a}$ the weight $\phi_{i,a}$. We may define a partition of unity:

\begin{equation}
\psi_{i,a}=\frac{\phi_{i,a}}{\sum_a\phi_{i,a}}.
\end{equation} Then
\bnu

\item For each $x\in \Rb^n$, there are at most $C(n)$ nonzero $\phi_{i,a}$ in a neighborhood of $x$.

\item In $\bigcup_aB_{0.4r_i}(x_{i,a})$, $\sum_a \phi_{i,a}(x)\ge 1$. $\psi_{i,a}$ is well defined and satisfies
\begin{equation}
|\nabla \psi_{i,a}|\le C(n)r_i^{-1},\quad|\nabla^2 \psi_{i,a}|\le C(n)r_i^{-2}.   
\end{equation}

\enu

We will also need a weight function to glue the $\Phi_{i+1}$ with the background $\Phi_i$. For each $i$, let $\phi'_{i,a}=\phi_{x_{i,a}, 0.5r_i}$. Take

\begin{equation}\chi_i=\frac{\sum_a\phi'_{i,a}}{\sum_a\phi'_{i,a}+\prod_a(1-\phi'_{i,a})}.\end{equation} 

Then
\bnu

\item $\sum_a\phi'_{i,a}+\prod_a(1-\phi'_{i,a})\ge 1$, and $\chi_i$ is well defined in $\Rb^n$.

\item For each $x$, at most $C(n)$ different $\phi'_{i,a} > 0$ in a neighborhood of $x$. Hence 
\begin{equation}
    |\nabla \chi_i|\le C(n)r_i^{-1},\quad |\nabla^2 \chi_i|\le C(n)r_i^{-2}.
\end{equation}

\item For each $x\in \bigcup_aB_{0.2r_{i+1}}(x_{i+1,a})$, $\phi'_{i,a}(x)=1$ for some $a$ and $\chi_i(x)=1$.

\item For each $x \not \in \bigcup_aB_{0.24r_{i+1}}(x_{i+1,a})$, $\phi'_{i,a}(x)=0$ for each $i$ and $\chi_i(x) = 0$.

\enu

\begin{cons}
    Construction of $\Phi_i$.
\end{cons}

For each $i$, we'll construct an approximate map $\Phi_i:\Rb^n\to \Rb^k$ to $\Phi$ above scale $r_i$. This can be done inductively. Without loss of generality, we may assume the splitting direction of $S$ in $B_2(0^n)$ is $\Rb^k\times\{0^{n-k}\}$. We start from the coordinate projection:
\begin{equation}
    \Phi_0:\Rb^{k}\times \Rb^{n-k}\to \Rb^k,\,(x,y)\mapsto x.
\end{equation}

Assume $\Phi_{i}$ has been selected. We first take the weighted average of some linear approximations of $\Phi_i$ for each $B_{r_{i+1}}(x_{i+1,a})$. Then glue it to the background $\Phi_{i}$ to get $\Phi_{i+1}$.

For each $x_{i+1,a}$, since $S$ is a $(k,\delta)$-splitting in $B_{r_{i+1}}(x_{i+1,a})$, we have a splitting direction $\hat {l}_{x_{i+1,a},r_{i+1}}$ and a corresponding orthogonal projection $\hat{P}_{{i+1},a}:\Rb^n\to \hat{l}_{x_{i+1,a},r_{i+1}}$. Take $f_{i+1,a}$ to be the affine map such that

\bnu

\item $f_{i+1,a}(x_{i+1,a})=\Phi_{i}(x_{i+1,a})$.

\item $\hat{f}_{i+1,a} = \nabla f_{i+1,a} := \nabla \Phi_{i}(x_{i+1,a})\circ \hat{P}_{i+1,a}$.

\enu

In $\bigcup_aB_{0.4r_{i+1}}(x_{i+1,a})$, we can take the average of $f_{i+1,a}$ by partition of unity and take

\begin{equation}\label{definition of Psi_i}
    \Psi_{i+1}=\sum_a\psi_{i+1,a}f_{i+1,a}.
\end{equation}

Then glue $\Psi_{i+1}$ with $\Phi_{i}$:

\begin{equation}\label{definition of Phi_i}
    \Phi_{i+1} = \chi_{i+1}\Psi_{i+1} + (1-\chi_{i+1})\Phi_{i}.
\end{equation}

\begin{rem}
$\Phi_i$ is defined on $\Rb^n$, although it is the trivial projection outside $B_2(0^n)$.
\end{rem}

\begin{rem}

$\Phi_{i+1}=\Phi_i$ outside $\bigcup_aB_{0.24r_{i+1}}(x_{i+1,a})$. $\Phi_{i+1}=\Psi_{i+1}$ in $\bigcup_aB_{0.2r_{i+1}}(x_{i+1,a})$. Note that 
\begin{equation}
    \bigcup_aB_{r_{i+1}}(x_{i+1,a})\subset \bigcup_aB_{0.02r_{i}}(x_{i,a}).
\end{equation}
$\Phi_{i+1}$ only modifies $\Phi_i$ on the region where $\Phi_i = \Psi_i$.
    
\end{rem}

\subsection{$C^0$, $C^1$ and $C^2$ Estimates of $\Phi_i$}

In this section we'll give the $C^0$, $C^1$ and $C^2$ controls of $\Phi_i$. We'll see that the level sets of $\Phi_i$ are almost perpendicular to $\hat{l}$ near $S$ at scale $r_i$, and $\nabla^2\Phi_i$ is small modulo $L(\nabla \Phi_i)$. The main result is the following lemma, which is stated under the setting of Section \ref{construction of phi_i}.

\begin{lem}\label{C^1 and C^2 modulo distortion}

Let $S\subset B_2(0^n)$ be a $(k,\delta,N)$-Reifenberg set. Assume $r_i=2^{-mi}$ be the scales and $\Phi_i$ be the maps defined in (\ref{definition of Phi_i}). Assume $m>C(n)$ is sufficiently large and $\delta<\delta(n,N,m)$ sufficiently small. Then there is some $C(n,N,m)$ such that the following hold for all $\Phi_i$.

\bnu

\item $\Phi_i$ is $C(n,N,m)\delta r_i^{-1}$-regular.
\item For $x\in \bigcup_a B_{0.4r_i}(x_{i,a})$, 
\begin{equation}
d((\ker\nabla \Phi_i(x))^\perp,\hat{l}_{x,r_i})\le C(n,N,m)\delta.
\end{equation}
\item For any $x\in \Rb^n$,
\begin{equation}
\begin{gathered}
||L(\nabla \Phi_i)(\Phi_{i+1}-\Phi_i)||\le C(n,N,m)\delta r_{i+1},\\
||L(\nabla \Phi_i)(\nabla\Phi_{i+1}-\nabla\Phi_i)||\le C(n,N,m)\delta.\\
||L(\nabla \Phi_i)^{-1}L(\nabla \Phi_{i+1}) - I_k||\le C(n,N,m)\delta.
\end{gathered}  
\end{equation}

\enu

\end{lem}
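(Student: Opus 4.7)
The natural strategy is to prove (1), (2), (3) simultaneously by induction on $i$, with item (3) serving as the bridge between successive steps. The base case $i=0$ is immediate: $\Phi_0(x,y)=x$ gives $\nabla \Phi_0 = (I_k, 0)$, $L(\nabla \Phi_0) = I_k$, $\nabla^2 \Phi_0 \equiv 0$, and $(\ker\nabla \Phi_0)^\perp = \Rb^k \times \{0^{n-k}\}$ coincides with the splitting direction on $B_2(0^n)$ by hypothesis; so (1) and (2) hold with constant $0$, and (3) will be established as part of the inductive step.

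For the inductive step, the plan is to establish (3) first. Fix a ball $B_{r_{i+1}}(x_{i+1,a})$ from the cover. The inductive (2), combined with Corollary \ref{comparison of splitting directions} (applied along the chain $B_{r_{i+1}}(x_{i+1,a})\subset B_{r_i}(x_{i+1,a})\subset B_{2r_i}(x_{i,a'})$ for a nearby scale-$r_i$ center $x_{i,a'}$ containing $x_{i+1,a}$ in its $0.01r_i$-ball), shows that $(\ker\nabla \Phi_i(x_{i+1,a}))^\perp$ and $\hat{l}_{x_{i+1,a},r_{i+1}}$ agree up to $O(m\delta)$; equivalently, $\nabla \Phi_i(x_{i+1,a})\circ(I-\hat{P}_{i+1,a})$ is $O(m\delta)$-small modulo $L(\nabla \Phi_i(x_{i+1,a}))$. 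A Taylor expansion around $x_{i+1,a}$, using the inductive $\delta r_i^{-1}$-regularity upgraded by Lemma \ref{regular map equivalent definition} to a pointwise $C^2$-bound modulo $L(\nabla \Phi_i(x_{i+1,a}))$ on $B_{r_{i+1}}(x_{i+1,a})$, then yields $|L(\nabla \Phi_i(x_{i+1,a}))^{-1}(\Phi_i-f_{i+1,a})|\le C(n,N,m)\delta r_{i+1}$ and $|L(\nabla \Phi_i(x_{i+1,a}))^{-1}(\nabla\Phi_i-\nabla f_{i+1,a})|\le C(n,N,m)\delta$ on that ball; the dominant piece is the linear $O(m\delta)$-term, while the quadratic Taylor remainder contributes only $O(2^{-m}\delta)$ in the gradient. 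Summing against $\psi_{i+1,a}$, using bounded intersection of the cover and the $C^2$-bounds on the partition of unity, and gluing through $\chi_{i+1}$ via the product rule (whose derivatives are bounded by $Cr_{i+1}^{-1}$ and $Cr_{i+1}^{-2}$) yields the first two estimates in (3). Lemma \ref{smoothness of QR cor1} applied to $\nabla \Phi_i$ versus $\nabla \Phi_{i+1}$ then gives the third estimate.

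With (3) in hand, (1) for $\Phi_{i+1}$ follows by a second application of Lemma \ref{regular map equivalent definition}: the pointwise $C^2$-bound on $\Phi_{i+1}$ modulo $L(\nabla \Phi_i)$ inherited from the construction, together with $L(\nabla \Phi_i)^{-1}L(\nabla \Phi_{i+1})\approx I_k$ from (3), promotes to $C(n,N,m)\delta r_{i+1}^{-1}$-regularity. For (2) I would argue directly rather than propagate the inductive constant: by construction $\ker \nabla f_{i+1,a} = \hat{l}^\perp_{x_{i+1,a},r_{i+1}}$ exactly, since $\nabla f_{i+1,a}=\nabla \Phi_i(x_{i+1,a})\circ \hat{P}_{i+1,a}$ and $\nabla \Phi_i(x_{i+1,a})$ restricts to an isomorphism on $\hat{l}_{x_{i+1,a},r_{i+1}}$ (a consequence of the inductive (2) and the proximity of splitting directions). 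After averaging over $a$ via the partition of unity and gluing via $\chi_{i+1}$, Corollary \ref{comparison of splitting directions} keeps the cost at $O(m\delta)$, so (2) at step $i+1$ holds with constant $C(n,N,m)$ independent of $i$.

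The main obstacle is precisely this uniformity in $i$. Two competing mechanisms appear: comparing splitting directions across a scale ratio of $2^m$ accumulates $O(m\delta)$ error via Corollary \ref{comparison of splitting directions}, while the quadratic Taylor remainder across that same ratio gains a compensating factor $r_{i+1}/r_i=2^{-m}$. Since the kernel of $\hat{f}_{i+1,a}$ is \emph{reset} to the new splitting direction at each step, rather than inherited, the constant in (2) is reestablished, not compounded, each round; the same is true for (1) via the gauge provided by $L(\nabla \Phi_i)$ and Lemma \ref{regular map equivalent definition}. Choosing $m$ sufficiently large absorbs the polynomial-in-$m$ losses and allows all inductive constants to be bounded once and for all, depending only on $n, N, m$. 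Book-keeping individual constants through the product rules and partition-of-unity sums is tedious but routine.
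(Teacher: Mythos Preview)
Your overall inductive scheme is the right one, but the claim that the constant in (2) is ``reset'' at each step is where the argument breaks. It is true that $\ker\hat f_{i+1,a}=\hat l_{x_{i+1,a},r_{i+1}}^{\perp}$ exactly, but $\nabla\Psi_{i+1}$ is not simply a convex combination of the $\hat f_{i+1,a}$: the product rule produces the terms $\nabla\psi_{i+1,c}\,(f_{i+1,c}-f_{i+1,b})$, and these value differences reintroduce the inductive constant. Concretely, writing
\[
f_{i+1,c}-f_{i+1,b}=(\hat f_{i+1,c}-\hat f_{i+1,b})(x-x_{i+1,c})+\bigl[\text{Taylor remainder}\bigr]+\nabla\Phi_i(x_{i+1,b})(I-\hat P_{i+1,b})(x_{i+1,c}-x_{i+1,b}),
\]
the last term, modulo $L(\nabla\Phi_i)$, is $\pi(\nabla\Phi_i)\big|_{\hat l^{\perp}}$ applied to the perpendicular displacement between cover centers. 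If you bound this displacement only by $r_{i+1}$, and the first factor by the inductive constant $\epsilon_i$ (plus $m\delta$), the recursion becomes $\epsilon_{i+1}\le C(n)\epsilon_i+C(n)m\delta$, which blows up regardless of how large $m$ is; the $2^{-m}$ gain only lives in the Taylor remainder and in $\hat f_{i+1,c}-\hat f_{i+1,b}$, not in this third term.

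The paper closes this gap with two ingredients you do not invoke. First, the cover centers $x_{i+1,c}$ lie on $S$, so their perpendicular displacement is at most $(\beta+\delta)r_{i+1}$ where $\beta=\diam{A^{\perp}_{x_{i+1,b},r_{i+1}}}/r_{i+1}$; this turns the recursion into $\epsilon_{i+1}\le C(n)\bigl((2^{-m}+\beta)\epsilon_i+m\delta\bigr)$. Second, Proposition~\ref{only finite scales have large width} guarantees that in any block of $M$ consecutive scales at most $N$ have $\beta\ge 2^{-m+2}$; at the remaining ``good'' scales the recursion contracts by $C(n)2^{-m}<1$, and the finitely many ``bad'' scales only multiply by $C(n)$. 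Choosing $m>m(n)$ and $M>M(n,N,m)$ then bounds $\epsilon_i\le C(n,N,m)\delta$ uniformly. Without these two pieces the induction does not close.
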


A direct consequence of Lemma \ref{C^1 and C^2 modulo distortion} is a bound of the transformation matrix $L(\nabla \Phi_i)$ in terms of the scale $r_i$. This also implies the uniform convergence of $\Phi_i$.

\begin{cor}\label{bound of L(Phi_i)}

Assume the same condition as in Lemma \ref{C^1 and C^2 modulo distortion}. For $\alpha>0$, if $\delta<\delta(n,N,m,\alpha)$, then the following is true.

\bnu 

\item $|L(\nabla \Phi_i)|_{op}, |L(\nabla \Phi_i)^{-1}|_{op}\le r_i^{-\alpha}$.

\item For $j>i$, $||\Phi_{j}-\Phi_{i}||_{L^\infty}\le C(n,N,m)\delta r_i^{1-\alpha}.$
\enu

\end{cor}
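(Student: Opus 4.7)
The plan is to iterate the third bullet of Lemma \ref{C^1 and C^2 modulo distortion} to establish part 1, then combine part 1 with the first inequality in that bullet to sum a telescoping series for part 2.

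For part 1, note that $\Phi_0(x,y)=x$ gives $\nabla\Phi_0=(I_k,0)$, which is already a Riemannian submersion, so $L(\nabla\Phi_0)=I_k$ and
\[
|L(\nabla\Phi_0)|_{op}=|L(\nabla\Phi_0)^{-1}|_{op}=1=r_0^{-\alpha}.
\]
Inductively, the third inequality
\[
||L(\nabla\Phi_i)^{-1}L(\nabla\Phi_{i+1})-I_k||\le C(n,N,m)\delta
\]
can be rewritten as $L(\nabla\Phi_{i+1})=L(\nabla\Phi_i)(I_k+E_i)$ with $|E_i|\le C(n,N,m)\delta$, so by submultiplicativity of the operator norm
\[
|L(\nabla\Phi_{i+1})|_{op}\le (1+C(n,N,m)\delta)\,|L(\nabla\Phi_i)|_{op},
\]
and the analogous inequality holds for the inverse. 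Iterating yields $|L(\nabla\Phi_i)|_{op}^{\pm 1}\le (1+C(n,N,m)\delta)^i$. I would then choose $\delta<\delta(n,N,m,\alpha)$ so small that $1+C(n,N,m)\delta\le 2^{m\alpha}$, which forces $(1+C(n,N,m)\delta)^i\le 2^{mi\alpha}=r_i^{-\alpha}$ for every $i$, establishing part 1.

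For part 2, the first inequality in the third bullet of Lemma \ref{C^1 and C^2 modulo distortion} gives a pointwise bound on $\Phi_{i+1}-\Phi_i$ weighted by $L(\nabla\Phi_i)$. Combining with the operator-norm bounds on $L(\nabla\Phi_i)^{\pm 1}$ just proved, this yields
\[
||\Phi_{i+1}-\Phi_i||_{L^\infty}\le C(n,N,m)\delta\cdot r_i^{-\alpha}\cdot r_{i+1}=C(n,N,m)\delta\cdot 2^{-m}\cdot r_i^{1-\alpha}.
\]
For $j>i$, telescoping and using $r_l=r_i\cdot 2^{-m(l-i)}$,
\[
||\Phi_j-\Phi_i||_{L^\infty}\le C(n,N,m)\delta\cdot 2^{-m}\sum_{l=i}^{j-1}r_l^{1-\alpha}\le \frac{C(n,N,m)\delta\cdot 2^{-m}}{1-2^{-m(1-\alpha)}}\,r_i^{1-\alpha},
\]
with the geometric sum convergent for $\alpha<1$, which is the relevant regime. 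Absorbing the constant into $C(n,N,m)$ gives part 2.

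There is no real obstacle here; the argument is a direct compounding of the per-step estimates supplied by Lemma \ref{C^1 and C^2 modulo distortion}. The only delicate point is that the threshold on $\delta$ must now depend on $\alpha$ as well as $n,N,m$, because the acceptable per-step multiplicative error in $|L(\nabla\Phi_i)|_{op}$ must be smaller than $2^{m\alpha}-1$ in order to keep its cumulative growth within the desired $r_i^{-\alpha}$ envelope uniformly in $i$.
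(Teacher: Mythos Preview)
Your proposal is correct and follows essentially the same approach as the paper: iterate the multiplicative bound $|L(\nabla\Phi_i)^{-1}L(\nabla\Phi_{i+1})-I_k|\le C(n,N,m)\delta$ starting from $L(\nabla\Phi_0)=I_k$, choose $\delta$ so that $1+C(n,N,m)\delta\le 2^{m\alpha}$, and then telescope the $C^0$ increments using part~1 to sum the geometric series. Your write-up is in fact slightly more explicit than the paper's (tracking the extra $2^{-m}$ and the geometric ratio), but the argument is the same.
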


Thus we may define the global map $\Phi:\Rb^n\to \Rb^k$ as the limit of $\Phi_i$.

\begin{dfn}\label{definition of Phi}

Assume the same conditions and constructions as in Corollary \ref{bound of L(Phi_i)}. Then the maps $\Phi_i$ converges in $L^\infty$. We define the limit as $\Phi:=\lim_i\Phi_i$.
    
\end{dfn}

By Corollary \ref{bound of L(Phi_i)}, we get the following control of $\Phi$ immediately.

\begin{prop}\label{bound of Phi}
Assume the same condition as in Corollary \ref{bound of L(Phi_i)}. Then
\bnu
\item For each $i$, $||\Phi_i-\Phi||_{L^\infty}\le C(n,N,m)\delta r_i^{1-\alpha}$.
\item For any $x\in \Rb^n$, $|\Phi(x)-x|\le C(n,N,m)\delta$.

\enu
\end{prop}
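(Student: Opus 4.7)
The plan is to deduce both estimates as immediate consequences of Corollary \ref{bound of L(Phi_i)}(2), which already provides uniform Cauchy-type bounds on the sequence $\{\Phi_i\}$; the heavy lifting has in effect been done, and what remains is to pass to the limit.

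For part (1), I would fix an index $i$ and invoke Corollary \ref{bound of L(Phi_i)}(2) to get $||\Phi_j - \Phi_i||_{L^\infty} \le C(n,N,m)\delta r_i^{1-\alpha}$ for every $j > i$, with the right hand side independent of $j$. By Definition \ref{definition of Phi}, the sequence $\Phi_j$ converges to $\Phi$ uniformly on $\Rb^n$, so evaluating at an arbitrary point $x$ and letting $j \to \infty$ yields $|\Phi(x) - \Phi_i(x)| \le C(n,N,m)\delta r_i^{1-\alpha}$, and taking the supremum over $x$ recovers the claimed $L^\infty$ bound.

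For part (2), I would simply specialize part (1) to $i = 0$. Since $r_0 = 1$, the estimate reads $||\Phi - \Phi_0||_{L^\infty} \le C(n,N,m)\delta$, and by construction $\Phi_0$ is the coordinate projection $(x,y) \mapsto x$, so under the standing identification of $\Rb^k$ with $\Rb^k \times \{0^{n-k}\} \subset \Rb^n$ (justified by condition (1) of the $(k,\delta,N)$-splitting Reifenberg hypothesis) this is precisely the statement $|\Phi(x) - x| \le C(n,N,m)\delta$.

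There is no real obstacle here: all of the substantive geometric work has been absorbed into the inductive per-scale bound on $||\Phi_{i+1} - \Phi_i||_{L^\infty}$ from Lemma \ref{C^1 and C^2 modulo distortion}, together with the growth bound on $|L(\nabla \Phi_i)|_{op}$ in Corollary \ref{bound of L(Phi_i)}(1), which converts the distorted error $||L(\nabla \Phi_i)(\Phi_{i+1} - \Phi_i)||$ into a plain $L^\infty$ error paying a geometric factor $r_i^{-\alpha}$ and summing the resulting geometric series in $i$.
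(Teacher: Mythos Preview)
Your proposal is correct and matches the paper's approach exactly: the paper states that Proposition \ref{bound of Phi} follows ``immediately'' from Corollary \ref{bound of L(Phi_i)}, and your argument---passing to the limit $j\to\infty$ in part (2) of that corollary for the first claim, then specializing to $i=0$ with $\Phi_0$ the coordinate projection for the second---is precisely the intended deduction. Your reading of $|\Phi(x)-x|$ via the identification $\Rb^k \cong \Rb^k\times\{0^{n-k}\}$ is also the correct one.
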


We will prove Lemma \ref{C^1 and C^2 modulo distortion} by induction. The core observation is the following claim:

\begin{clm}\label{induction lemma for C^2 control}

Pick $x_{i+1,b}\in B_{0.01r_i}(x_{i,a})$. Assume there is a sufficiently small $0<\epsilon_{i,a}<\epsilon(n)$ such that
\bnu
\item $\Phi_i$ is $\epsilon_{i,a} r_i^{-1}$-regular in $B_{0.4r_i}(x_{i,a})$.
\item For $x\in B_{0.4r_i}(x_{i,a})$, $d((\ker \nabla \Phi_i(x))^\perp,\hat{l}_{x,r_i})\le \epsilon_{i,a}$.
\item $\diam{A^\perp_{x_{i+1,b},r_{i+1}}}\le \beta r_{i+1}$.

\enu

Define $\Psi_{i+1}$ as in (\ref{definition of Psi_i}). Then there is some $C(n)$ and $\epsilon_{i+1,b}=C(n)((2^{-m}+\beta)\epsilon_{i,a}+m\delta)$ such that the following holds in $B_{0.4r_{i+1}}(x_{i+1,b})$:

\bnu

\item $\Psi_{i+1}$ is $\epsilon_{i+1,b} r_{i+1}^{-1}$-regular. 
\item $d((\ker \nabla \Psi_{i+1}(x))^\perp,\hat{l}_{x,r_{i+1}})\le \epsilon_{i+1,b}$.
\item $|L(\nabla\Phi_i(x_{i+1,b}))^{-1}L(\nabla \Psi_{i+1}(x))-I_k|\le \epsilon_{i+1,b}$.

\enu

\end{clm}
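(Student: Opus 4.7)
The plan is to work in coordinates where $L_b := L(\nabla \Phi_i(x_{i+1,b})) = I_k$ and $\hat{l}_{x_{i+1,b}, r_{i+1}} = \Rb^k \times \{0^{n-k}\}$, set $\pi_b := \pi(\nabla \Phi_i(x_{i+1,b}))$ and $\hat{P}_a := \hat{P}_{i+1,a}$, and control how much each affine piece $f_{i+1,a}$ deviates from $f_{i+1,b}$, with all estimates taken modulo $L_b$. For any $a$ with $\psi_{i+1,a}$ nonvanishing on $B_{0.4r_{i+1}}(x_{i+1,b})$, the center $x_{i+1,a}$ lies within $0.88\, r_{i+1}$ of $x_{i+1,b}$, hence inside $S \cap B_{r_{i+1}}(x_{i+1,b})$, where the splitting hypothesis with $\diam{A^\perp_{x_{i+1,b},r_{i+1}}} \le \beta r_{i+1}$ applies.

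First I would prove the derivative comparison
\begin{equation}
|L_b^{-1}(\nabla f_{i+1,a} - \nabla f_{i+1,b})| \le C(n)(2^{-m}\epsilon_{i,a} + m\delta),
\end{equation}
by writing the difference as $[\nabla \Phi_i(x_{i+1,a}) - \nabla \Phi_i(x_{i+1,b})]\hat{P}_a + \nabla \Phi_i(x_{i+1,b})[\hat{P}_a - \hat{P}_b]$, bounding the first summand via the $\epsilon_{i,a}r_i^{-1}$-regularity of $\Phi_i$ and Lemma \ref{regular map equivalent definition}, and the second via $|\hat{P}_a - \hat{P}_b|_{op} = d(\hat{l}_a, \hat{l}_b) \le 40\delta$ from Corollary \ref{comparison of splitting directions}. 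Next I would prove the value comparison
\begin{equation}
|L_b^{-1}(f_{i+1,a}(x) - f_{i+1,b}(x))| \le C(n)\bigl((2^{-m}+\beta)\epsilon_{i,a} + m\delta\bigr)r_{i+1}
\end{equation}
by Taylor-expanding $\Phi_i$ at $x_{i+1,b}$ and isolating the principal piece $\pi_b(I - \hat{P}_b)(x_{i+1,a} - x_{i+1,b})$. The factor $|(I-\hat{P}_b)(x_{i+1,a}-x_{i+1,b})| \le (\beta + 2\delta)r_{i+1}$ comes from the diameter hypothesis applied to the reduced splitting set (Lemma \ref{reducing the splitting set}), while the operator norm of $\pi_b$ restricted to $\hat{l}_b^\perp$ is $\le \epsilon_{i,a} + C(n)m\delta$, since the inductive hypothesis places the row space of $\nabla \Phi_i(x_{i+1,b})$ within $\epsilon_{i,a}$ of $\hat{l}_{x_{i+1,b}, r_i}$ and Corollary \ref{comparison of splitting directions} bridges the scales $r_i$ and $r_{i+1}$ at an extra cost of $10(m+2)\delta$. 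The product of these two factors is precisely what produces the $\beta\epsilon_{i,a}$ term in the target bound.

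With these in hand, I would differentiate $\Psi_{i+1} = \sum_a \psi_{i+1,a}f_{i+1,a}$ and use the partition-of-unity identities $\sum_a \nabla^j \psi_{i+1,a} = 0$ for $j=1,2$ to re-express $\nabla \Psi_{i+1} - \nabla f_{i+1,b}$ and $\nabla^2 \Psi_{i+1}$ as sums against the differences $f_{i+1,a} - f_{i+1,b}$ and $\nabla f_{i+1,a} - \nabla f_{i+1,b}$. The cutoff bounds $|\nabla \psi_{i+1,a}|\le C(n)r_{i+1}^{-1}$ and $|\nabla^2\psi_{i+1,a}|\le C(n)r_{i+1}^{-2}$, combined with the two pointwise estimates, yield $\|L_b^{-1}\nabla^2\Psi_{i+1}\|_{L^\infty(B_{0.4r_{i+1}}(x_{i+1,b}))} \le \epsilon_{i+1,b}r_{i+1}^{-1}$, from which item 1 follows by Lemma \ref{regular map equivalent definition}(2), along with $|L_b^{-1}(\nabla\Psi_{i+1}-\nabla f_{i+1,b})| \le C(n)\epsilon_{i+1,b}$. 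Items 2 and 3 then follow from two invocations of Lemma \ref{smoothness of QR cor1}: first to transfer the last bound to $|\pi(\nabla\Psi_{i+1}) - \pi(\pi_b\hat{P}_b)| \le C(n)\epsilon_{i+1,b}$ and $|L(L_b^{-1}\nabla\Psi_{i+1}) - L(\pi_b\hat{P}_b)| \le C(n)\epsilon_{i+1,b}$; and second, using that $\pi_b\hat{P}_b$ is $(\epsilon_{i,a}+C(n)m\delta)$-close to $\pi_b$ by the $\hat{l}_b^\perp$-control above (and $L(\pi_b)=I_k$), to conclude that $L(\pi_b\hat{P}_b)$ is within that error of $I_k$ and its row space is within that error of $\hat{l}_{x_{i+1,b},r_{i+1}}$. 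A final appeal to Corollary \ref{comparison of splitting directions} passes from $x_{i+1,b}$ to the variable point $x\in B_{0.4r_{i+1}}(x_{i+1,b})$ at the same scale.

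The hardest part will be the triple-subspace bookkeeping at the heart of the value comparison: the row space of $\nabla\Phi_i$ is controlled relative to $\hat{l}_{x_{i+1,b}, r_i}$, the projector $\hat{P}_b$ references $\hat{l}_{x_{i+1,b}, r_{i+1}}$, and the ultimate target in item 2 is $\hat{l}_{x, r_{i+1}}$ at a variable $x$. Chaining Corollary \ref{comparison of splitting directions} through these three without accumulating more than an $m\delta$ error, and simultaneously ensuring that the only $\beta$ dependence is through the product $\beta\epsilon_{i,a}$ (i.e.\ that no standalone $\beta$ term leaks in from the derivative comparison), is the delicate point that pins down the precise form $\epsilon_{i+1,b} = C(n)((2^{-m}+\beta)\epsilon_{i,a}+m\delta)$ required by the claim.
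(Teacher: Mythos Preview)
Your proposal is correct and follows essentially the same route as the paper's proof. The paper also reduces everything to the two comparisons $|L_b^{-1}(\hat{f}_{i+1,c}-\hat{f}_{i+1,b})|$ and $|L_b^{-1}(f_{i+1,c}-f_{i+1,b})|$, uses the identical three-term decomposition for the value comparison (your ``Taylor remainder + principal piece + derivative-difference'' split is exactly equation (\ref{f_i+1,c-f_i+1,b})), and then feeds these into the partition-of-unity expressions for $\nabla\Psi_{i+1}$ and $\nabla^2\Psi_{i+1}$ before invoking Lemma \ref{smoothness of QR cor1} to extract items 2 and 3; the only cosmetic difference is that the paper keeps $L(\nabla\Phi_i(x_{i+1,b}))^{-1}$ explicit rather than normalizing $L_b=I_k$.
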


\pf

\textbf{Step 1:} In $B_{0.4r_{i+1}}(x_{i+1,b})$, we have
\begin{equation}
\Psi_{i+1}=\sum_{c}\psi_{i+1,c}f_{i+1,c},
\end{equation}
where the sums is only taken over those $c$ with $x_{i+1,c}\in B_{0.8r_{i+1}}(x_{i+1,b})$. 

Since $\sum_c \psi_{i+1,c}\equiv 1$, we have $\sum_c \nabla\psi_{i+1,c}\equiv 0$, $\sum_c \nabla^2\psi_{i+1,c}\equiv 0$. Since $f_{i+1,c}$ is affine, we have $\nabla^2 f_{i+1,c}=0$. Now compute
\begin{equation}\label{ker nabla Psi_i+1}
    \nabla\Psi_{i+1}(x)-\hat{f}_{i+1,b}=\sum_c \nabla \psi_{i+1,c} (f_{i+1,c}-f_{i+1,b})+\psi_{i+1,c}(\hat f_{i+1,c}-\hat{f}_{i+1,b}).
\end{equation}
For $j,l\le n$,
\begin{align}\label{nabla^2 ker Psi_i+1}
\begin{split}
\partial_j\partial_l\Psi_{i+1}&=\sum_c\partial
_j\partial_l\psi_{i+1,c}f_{i+1,c}+\partial_j\psi_{i+1,c}\partial_l f_{i+1,c}+\partial_l\psi_{i+1,c}\partial_j f_{i+1,c}+\psi_{i+1,c}\partial_j\partial_l f_{i+1,c}\\
&=\sum_c \partial_j\partial_l\psi_{i+1,c}(f_{i+1,c}-f_{i+1,b})+\partial_j\psi_{i+1,c}(\partial_l f_{i+1,c}-\partial_lf_{i+1,b})+\partial_l\psi_{i+1,c}(\partial_j f_{i+1,c}-\partial_j f_{i+1,b}).
\end{split}
\end{align}

Thus to control $L(\nabla\Psi_{i+1})\nabla^2\Psi_{i+1}$ and $\ker \nabla \Psi_{i+1}$, we need to control the terms in (\ref{ker nabla Psi_i+1}) and (\ref{nabla^2 ker Psi_i+1}). We will see that the $L(\nabla \Psi_{i+1})$ can be replaced by $L(\nabla \Phi_{i}(x_{i+1,b}))$ later, so it suffices to bound $L(\nabla \Phi_i(x_{i+1,b}))^{-1}(\hat{f}_{i+1,c}-\hat{f}_{i+1,b})$ and $L(\nabla \Phi_i(x_{i+1,b}))^{-1}(f_{i+1,c}-f_{i+1,b})$.

\vspace{10pt}
\textbf{Step 2:}  In this step we prove controls of the splitting directions near $x_{i+1,b}$ for later use. 

Let $A_{x_{i+1,b},r_{i+1}}$ be the $(k,\delta)$ splitting set of $S$ in $B_{r_{i+1}}(x_{i+1,b})$ with splitting direction $\hat{l}_{x_{i+1,b},r_{i+1}}$. Since $\diam{A^\perp_{x_{i+1,b},r_{i+1}}}\le \beta r$, all $x_{i+1,c}\in B_{0.8r_{i+1}}(x_{i+1,b})$ lies in $B_{(\beta + \delta)r_{i+1}}(\hat{l}_{x_{i+1,b},r_{i+1}})$. 

Recall that $r_{i+1}=2^{-m}r_i$. By Corollary \ref{comparison of splitting directions}, $d(\hat{l}_{x_{i+1,b},r_i},\hat{l}_{x_{i+1,b},r_{i+1}})\le 10 m\delta.$ Thus
\begin{equation}
d((\ker\nabla\Phi_i(x_{i+1,b}))^\perp,\hat{l}_{x_{i+1,b},r_{i+1}})\le \epsilon_{i,a} + 10m\delta.
\end{equation}
Or equivalently
\begin{equation}
d(\ker \nabla \Phi_i(x_{i+1,b}),\hat{l}^\perp_{x_{i+1,b},r_{i+1}})\le \epsilon_{i,a}+10m\delta.
\end{equation}

Note that $x\in B_{0.4r_{i+1}}(x_{i+1,b})$, $S\cap B_{r_{i+1}/2}(x)\neq \emptyset$. Hence Corollary \ref{comparison of splitting directions} implies
$d(\hat{l}_{x,r_{i+1}},\hat{l}_{x_{i+1,b},r_{i+1}})\le 40\delta$ for any $x\in B_{r_{i+1}}(x_{i+1,b})$. Recall that $\hat{P}_{i+1,c}$ is the orthogonal projection from $\Rb^n$ to $\hat{l}_{x_{i+1,c},r_{i+1}}$. By Lemma \ref{equivalent definition of distance of subspaces}, we have 
\begin{equation}
|\hat{P}_{i+1,c}-\hat{P}_{i+1,b}|_{op}\le 40 \delta.
\end{equation}

Additionaly, since $\Phi_i$ is $\epsilon_{i,a}$-regular, Lemma \ref{regular map equivalent definition} implies that 
\begin{equation}
||L(\nabla \Phi_i(x_{i+1,b}))^{-1}\nabla^2\Phi_i||_{L^\infty(B_{r_{i+1}}(x_{i+1,b}))}\le 2\epsilon_{i,a} r_i^{-1}.
\end{equation}

\textbf{Step 3:} In this step we give a bound of $L(\nabla \Phi_i(x_{i+1,b}))^{-1}(\hat{f}_{i+1,c}-\hat{f}_{i+1,b})$. 

Recall that $\hat{f}_{i+1,c}=\nabla \Phi_i(x_{i+1,c})\circ \hat{P}_{i+1,c}$. Compute
\begin{align}\begin{split}
&\,|L(\nabla \Phi_{i}(x_{i+1,b}))^{-1}(\hat{f}_{i+1,c}-\hat{f}_{i+1,b})|\\
=&\,|L(\nabla \Phi_{i}(x_{i+1,b}))^{-1}(\nabla \Phi_i(x_{i+1,c})\circ \hat{P}_{i+1,c}-\nabla\Phi_i(x_{i+1,b})\circ \hat{P}_{i+1,b})|\\
\le &\, |L(\nabla\Phi_{i}(x_{i+1,b}))^{-1}(\nabla \Phi_i(x_{i+1,c})-\nabla \Phi_i(x_{i+1,b}))\circ \hat{P}_{i+1,c}|\\
&\,+|L(\nabla \Phi_i(x_{i+1,b}))^{-1}\nabla \Phi_i(x_{i+1,b})\circ(\hat{P}_{i+1,c}-\hat{P}_{i+1,b})|\\
\le&\,2\epsilon_{i,a} (r_{i+1}/r_i)+40\delta\\
\le &\,C\cdot(2^{-m}\epsilon_{i,a}+\delta)
\end{split}\end{align}

\textbf{Step 4:} In this step we give a bound of $L(\nabla \Phi_i(x_{i+1,b}))^{-1}(f_{i+1,c}-f_{i+1,b})$. 

Recall that
\begin{equation}
f_{i+1,c}(x)=\hat{f}_{i+1,c}(x-x_{i+1,c})+\Phi_i(x_{i+1,c}).
\end{equation}

For $x\in B_{0.4r_{i+1}}(x_{i+1,b})$, we have
\begin{align}\label{f_i+1,c-f_i+1,b}
\begin{split}
f_{i+1,c}(x)-f_{i+1,b}(x)=&\,\Phi_i(x_{i+1,c})-\Phi_i(x_{i+1,b})+\hat{f}_{i+1,c}(x-x_{i+1,c})-\hat{f}_{i+1,b}(x-x_{i+1,b})\\
=&\,(\hat{f}_{i+1,c}-\hat{f}_{i+1,b})(x-x_{i+1,c})\\
&\,+(\Phi_{i}(x_{i+1,c})-\Phi_i(x_{i+1,b})-\nabla \Phi_i(x_{i+1,b})(x_{i+1,c}-x_{i+1,b})\\
&\,+(\nabla \Phi(x_{i+1,b}) - \hat{f}_{i+1,b})(x_{i+1,c}-x_{i+1,b}).
\end{split}
\end{align}
To bound $L(\nabla \Phi_{i}(x_{i+1,b}))^{-1}(f_{i+1,c}(x)-f_{i+1,b}(x))$, it suffices to bound each term in (\ref{f_i+1,c-f_i+1,b}). The estimates of the first two terms are straightforward.
\begin{align}
\begin{split}
&\,|L(\nabla\Phi_{i}(x_{i+1,b}))^{-1}(\hat{f}_{i+1,c}-\hat{f}_{i+1,b})(x-x_{i+1,c})|\\
\le &\,|L(\nabla\Phi_{i}(x_{i+1,b}))^{-1}(\hat{f}_{i+1,c}-\hat{f}_{i+1,b})|\cdot|x-x_{i+1,c}|\\
\le&\,(2\cdot 2^{-m}\epsilon_{i,a} + 40 \delta)\cdot 1.2r_{i+1}\\
=&\, (2.4\cdot 2^{-m}\epsilon_{i,a} +48\delta) r_{i+1}.
\end{split}
\end{align}
\begin{align}
\begin{split}
&\,|L(\nabla\Phi_{i}(x_{i+1,b}))^{-1}(\Phi_{i}(x_{i+1,c})-\Phi_i(x_{i+1,b})-\nabla \Phi_i(x_{i+1,b})(x_{i+1,c}-x_{i+1,b}))|\\
\le &\, \frac{1}{2}||L(\nabla \Phi_i(x_{i+1,b}))^{-1}\nabla^2\Phi_i||_{L^\infty(B_{r_{i+1}}(x_{i+1,b}))}\cdot|x_{i+1,c}-x_{i+1,b}|^2\\
\le &\, \epsilon_{i,a} r_{i+1}^2/r_i.
\end{split}
\end{align}
For the last term, recall that $x_{i+1,c}\in B_{(\beta + \delta)r_{i+1}}(l_{x_{i+1,b},r_{i+1}})\cap B_{r_{i+1}}(x_{i+1,b})$, where $l_{x_{i+1,b},r_{i+1}}$ is the $k$-plane in the direction of $\hat{l}_{x_{i+1,b},r_{i+1}}$ passing $x_{i+1,b}$. We have 
\begin{equation}|x_{i+1,c}-\hat{P}_{i+1,b}(x_{i+1,c})|\le (\beta + \delta)r_{i+1}.\end{equation} 
Note that $x_{i+1,c}-\hat{P}_{i+1,b}(x_{i+1,c})\in \hat{l}_{x_{i+1,b},r_{i+1}}^\perp$. Since we know $d(\ker \nabla \Phi_i(x),\hat{l}^\perp_{x_{i+1,b},r_{i+1}})\le \epsilon_{i,a}+10m\delta$, we have
\begin{align}
\begin{split}
&\,|(\nabla \Phi(x_{i+1,b}) - \hat{f}_{i+1,b})(x_{i+1,c}-x_{i+1,b})|\\
=&\,|L(\nabla \Phi_i(x_{i+1,b}))^{-1}\nabla \Phi(x_{i+1,b})\circ(\id - \hat{P}_{i+1,b})(x_{i+1,c}-x_{i+1,b})|\\
\le&\,|\pi(\nabla\Phi_i(x_{i+1,b}))(x_{i+1,c}-\hat{P}_{i+1,b}(x_{i+1,c}))|\\
\le&\,(\epsilon_{i,a}+10m\delta)|x_{i+1,c}-\hat{P}_{i+1,b}(x_{i+1,c})|\\
\le&\,(\epsilon_{i,a}+10m\delta)(\beta+\delta)r_{i+1}.
\end{split}\end{align}

Sum everything up, we have
\begin{align}\begin{split}
|L(\nabla \Phi_{i}(x_{i+1,b}))^{-1}(f_{i+1,c}(x)-f_{i+1,b}(x))|&\le ((3.4\cdot 2^{-m}+\beta+\delta)\epsilon_{i,a}+(10m(\beta + \delta)+48)\delta)r_{i+1}\\
&\le C\cdot((2^{-m}+\beta)\epsilon_{i,a}+m\delta)r_{i+1}.
\end{split}\end{align}

\textbf{Step 5:} In this step we analyze the subspace $\ker\nabla \Psi_{i+1}(x)$.

For $x\in B_{0.4r_{i+1}}(x_{i+1,b})$, compute
\begin{align}
\begin{split}
&\,|L(\nabla \Phi_{i}(x_{i+1,b}))^{-1}(\nabla\Psi_{i+1}(x)-\hat{f}_{i+1,b})|\\
=&\,|\sum_c \nabla \psi_{i+1,c} L(\nabla \Phi_{i}(x_{i+1,b}))^{-1}(f_{i+1,c}-f_{i+1,b})+\psi_{i+1,c}L(\nabla \Phi_{i}(x_{i,b}))^{-1}(\hat f_{i+1,c}-\hat{f}_{i+1,b})|\\
\le&\, C(n)r_{i+1}^{-1}((2^{-m}+\beta+\delta)\epsilon_{i,a}+m\delta)r_{i+1}+C(n)(2^{-m}\epsilon_{i,a}+\delta)\\
\le&\,C(n)((2^{-m}+\beta)\epsilon_{i,a}+m\delta).
\end{split}
\end{align}

Recall that $\hat{f}_{i+1,b} = \nabla\Phi_i(x_{i+1,b})\circ \hat{P}_{i+1,b}$ and
\begin{equation}
d(\ker \nabla \Phi_i(x_{i+1,b}),\hat{l}^\perp_{x_{i+1,b},r_{i+1}})\le \epsilon_{i,a}+10m\delta.
\end{equation}
We know
\begin{equation}\label{nabla Phi_i - f_i+1}
    |L(\nabla \Phi_i(x_{i+1,b}))^{-1}(\nabla\Phi_i(x_{i+1,b})-\hat{f}_{i+1,b})|\le \epsilon_{i,a}+10m\delta.
\end{equation}
By Lemma \ref{smoothness of QR cor1}, we have
\begin{equation}\label{Phi_i and f_i+1}
    |L(\nabla \Phi_i(x_{i+1,b}))^{-1}L(\hat{f}_{i+1,b})-I_k|\le C(n)(\epsilon_{i,a}+10m\delta)<0.01.
\end{equation}
Thus the $L(\nabla \Phi_i(x_{i+1,b}))^{-1}$ term can be replaced by $L(\hat{f}_{i+1,b})^{-1}$.
\begin{equation}
    |L(\hat{f}_{i+1,b})^{-1}(\nabla\Psi_{i+1}(x)-\hat{f}_{i+1,b})|\le C(n)((2^{-m}+\beta)\epsilon_{i,a}+m\delta).
\end{equation}

Apply Lemma \ref{smoothness of QR cor1}, we yield 
\begin{equation}\label{Psi_i+1 and f_i+1}
\begin{split}
|\pi(\nabla \Psi_{i+1}(x))-\pi({\hat{f}_{i+1,b}})|& \le C(n)((2^{-m}+\beta)\epsilon_{i,a}+m\delta),\\
|L(\hat{f}_{i+1,b})^{-1}L(\Psi_{i+1}(x))-&I_k| \le C(n)((2^{-m}+\beta)\epsilon_{i,a}+m\delta).
\end{split}
\end{equation}
Note that $\hat{l}_{x_{i+1,b},r_{i+1}}=(\ker \hat{f}_{i+1,b})^{\perp}$. We get
\begin{equation}d((\ker\nabla \Psi_{i+1}(x))^\perp,\hat{l}_{x_{i+1,b},r_{i+1}})\le C(n)((2^{-m}+\beta)\epsilon_{i,a}+m\delta),\end{equation}
which is the main estimate of this step.

Combine (\ref{Phi_i and f_i+1}) and (\ref{Psi_i+1 and f_i+1}), we also get
\begin{equation}\label{Phi_i and Psi_i+1}
    |L(\nabla\Phi_i(x_{i+1,b}))^{-1}L(\nabla \Psi_{i+1}(x))-I_k|\le C(n)((2^{-m}+\beta)\epsilon_{i,a}+m\delta).
\end{equation}

\textbf{Step 6:} In this step we prove the regularity of $\Psi_{i+1}$.

In $B_{0.4r_{i+1}}(x_{i+1,b})$, for any $j,l\le n$, from (\ref{nabla^2 ker Psi_i+1}) we have
\begin{align}
\begin{split}
&\,r_{i+1}|L(\nabla \Phi_{i}(x_{i+1,b}))^{-1}\partial_j\partial_l\Psi_{i+1}|\\
=&\,r_{i+1}|\sum_c L(\nabla \Phi_{i}(x_{i+1,b}))^{-1}(\partial_j\partial_l\psi_{i+1,c}(f_{i+1,c}-f_{i+1,b})\\
&\,+\partial_j\psi_{i+1,c}(\partial_l f_{i+1,c}-\partial_lf_{i+1,b})+\partial_l\psi_{i+1,c}(\partial_j f_{i+1,c}-\partial_j f_{i+1,b}))|\\
\le&\,r_{i+1}C(n)r_{i+1}^{-2}\cdot C\cdot((2^{-m}+\beta+\delta)\epsilon_{i,a}+m\delta)r_{i+1}+r_{i+1}C(n)r_{i+1}^{-1}\cdot C\cdot(2^{-m}\epsilon_{i,a}+\delta)\\
\le&\,C(n)((2^{-m}+\beta)\epsilon_{i,a}+m\delta)
\end{split}
\end{align}
Therefore, 
\begin{equation}||L(\nabla \Phi_{i}(x_{i+1,b}))^{-1}\nabla^2\Psi_{i+1}||\le C(n)((2^{-m}+\beta)\epsilon_{i,a}+m\delta)r_{i+1}^{-1}\end{equation}
By (\ref{Phi_i and Psi_i+1}), we may replace $L(\nabla \Phi_i(x_{i+1,b}))^{-1}$ by $L(\nabla \Psi_{i+1}(x))^{-1}$ and have
\begin{equation}\label{Psi_i+1 regular}
    ||L(\nabla \Psi_{i+1})^{-1}\nabla^2\Psi_{i+1}||\le C(n)((2^{-m}+\beta)\epsilon_{i,a}+m\delta)r_{i+1}^{-1}
\end{equation}
Take $\epsilon_{i+1,b}=C(n)((2^{-m}+\beta)\epsilon_{i,a}+m\delta)$ for some appropriate $C(n)$ finishes the proof of the claim.

\epf

Now we prove Lemma \ref{C^1 and C^2 modulo distortion} and Corollary \ref{bound of L(Phi_i)}. The proofs of these results are simply gathering up the partial results in Claim \ref{induction lemma for C^2 control}.

\pf[Proof of Lemma \ref{C^1 and C^2 modulo distortion}] 

It is clear that $\Phi_1$ satisfies all the estimates. Take a sequence of balls $B_{r_i}(x_{i,a_i})$ with $x_{i+1,a_{i+1}}\in B_{0.01 r_i}(x_{i,a_i})$ for each $i$. We know $\Phi_i=\Psi_i$ in $B_{r_{i+1}}(x_{i+1,a_{i+1}})$. Consider the constants $\epsilon_{i,a_{i}}$. By Claim \ref{induction lemma for C^2 control}, if $\diam{A^\perp_{x_{i+1,a_{i+1}},r_{i+1}}}< 2^{-m+2}$, we have
\begin{equation}\epsilon_{i+1,a_{i+1}}\le C(n)(2^{-m}\epsilon_{i,a_i}+m\delta).\end{equation}
Otherwise, we have
\begin{equation}\epsilon_{i+1,a_{i+1}}\le C(n)(\epsilon_{i,a}+m\delta).\end{equation}

By Proposition \ref{only finite scales have large width}, given $M>0$, if $\delta<\delta(n,N,m,M)$, there are at most $N$ different $i$ with $\diam{A^\perp_{x_{i+1,a_{i+1}},r_{i+1}}} \ge 2^{-m+2}$ in each $M$ subsequent scales. Assume $m>m(n)$, $M>M(n,N,m)$, $\delta<\delta(n,N,m,M)$. Then it is easy to verify that the sequence $\{\epsilon_{i,a_i}\}$ is bounded with $\epsilon_{i,a_i}\le C(n,N,m,M)\delta$. Fix $M$ and we have
\begin{equation}
    \epsilon_{i,a_i}\le C(n,N,m)\delta.
\end{equation}

For $x\in B_{0.4 r_{i+1}}(x_{i+1,b})$, the above discussion has proved that
\begin{equation}
\begin{split}
&\quad||L(\nabla \Phi_i)^{-1}\nabla^2 \Psi_{i+1}||\le C(n,N,m)\delta r_{i+1}^{-1},\\
&\,\,||L(\nabla \Phi_i)^{-1}(\nabla\Psi_{i+1}-\hat{f}_{i+1,b})||\le C(n,N,m)\delta,\\
&||L(\nabla \Phi_i)^{-1}(\Psi_{i+1}-f_{i+1,b})||\le C(n,N,m)\delta r_{i+1}.
\end{split}
\end{equation}

To finish the proof, it suffices to replace $\Psi_{i+1}$ by $\Phi_{i+1}$. Recall that
\begin{equation}
    \Phi_{i+1} = \chi_{i+1}\Psi_{i+1}+(1-\chi_{i+1})\Phi_i.
\end{equation}

In $B_{0.4r_{i+1}}(x_{i+1,b})$, $\Phi_i=\Psi_i$. By (\ref{Psi_i+1 regular}), we have
\begin{equation}
    ||L(\nabla \Phi_i)^{-1}\nabla^2\Phi_i||\le C(n,N,m)\delta r_i^{-1}.
\end{equation}
and thus
\begin{equation}
    ||L(\nabla \Phi_i)^{-1}L(\nabla\Phi_i(x_{i+1,b}))-I_k||\le C(n,N,m)\delta.
\end{equation}

Note that (\ref{nabla Phi_i - f_i+1}) now reads
\begin{equation}
    |L(\nabla \Phi_i(x_{i+1,b}))^{-1}(\nabla \Phi_i(x_{i+1,b})-\hat{f}_{i+1,b})|\le C(n,N,m)\delta.
\end{equation}

Thus we can replace $L(\nabla\Phi_i(x_{i+1,b}))$ by $L(\nabla \Phi_i)$ and get
\begin{align}\label{Phi_i and f_i+1,b}
\begin{split}
    ||&L(\nabla \Phi_i)^{-1}(\nabla\Phi_i-\hat{f}_{i+1,b})||\le C(n,N,m)\delta r_{i+1}.\\
    &||L(\nabla \Phi_i)^{-1}(\Phi_i-f_{i+1,b})||\le C(n,N,m)\delta r_{i+1}.
\end{split}
\end{align}

Take the derivative of $\Phi_{i+1}$ and combine the estimates above. Thus we have
\begin{equation}\label{first estimate of Phi_i+1}
\begin{split}
&\quad||L(\nabla \Phi_i)^{-1}\nabla^2 \Phi_{i+1}||\le C(n,N,m)\delta r_{i+1}^{-1},\\
&\,\,||L(\nabla \Phi_i)^{-1}(\nabla\Phi_{i+1}-\hat{f}_{i+1,b})||\le C(n,N,m)\delta,\\
&||L(\nabla \Phi_i)^{-1}(\Phi_{i+1}-f_{i+1,b})||\le C(n,N,m)\delta r_{i+1}.
\end{split}
\end{equation}
Combine (\ref{Phi_i and f_i+1,b}) and (\ref{first estimate of Phi_i+1}). We have
\begin{equation}\label{Phi_i and Phi_i+1}
\begin{gathered}
||L(\nabla \Phi_i)(\Phi_{i+1}-\Phi_i)||\le C(n,N,m)\delta r_{i+1},\\
||L(\nabla \Phi_i)(\nabla\Phi_{i+1}-\nabla\Phi_i)||\le C(n,N,m)\delta.
\end{gathered}  
\end{equation}
And by (\ref{Phi_i and Phi_i+1}) we have
\begin{equation}\label{transformation of Phi_i and Phi_i+1}
||L(\nabla\Phi_i)L(\nabla \Phi_{i+1})-I_k||\le C(n,N,m)\delta.
\end{equation}

By (\ref{Phi_i and Phi_i+1}), we may replace $L(\nabla \Phi_{i})^{-1}$ in (\ref{first estimate of Phi_i+1}) by $L(\nabla \Phi_{i+1})^{-1}$. Thus in $B_{0.4r_{i+1}}(x_{i+1,b})$ we have
\begin{equation}
    ||L(\nabla \Phi_{i+1})^{-1}\nabla^2\Phi_{i+1}||\le C(n,N,m)\delta r_{i+1}^{-1}.
\end{equation}

We have been proving the estimates in $B_{0.4r_{i+1}}(x_{i+1,b})$. However, since $\Phi_{i+1}=\Phi_i$ outside $\bigcup_aB_{0.4r_{i+1}}(x_{i+1,a})$, (\ref{Phi_i and Phi_i+1}) and (\ref{transformation of Phi_i and Phi_i+1}) actually hold on the entire $\Rb^n$. And we may apply the estimates for $||L(\nabla\Phi_i)^{-1}\nabla^2\Phi_i||$ to $||L(\nabla\Phi_{i+1})^{-1}\nabla^2\Phi_{i+1}||$. Thus inductively we see that $\Phi_i$ is $C(n,N,m)\delta r_i^{-1}$-regular in $\Rb^n$. Finally, by (\ref{first estimate of Phi_i+1}) and (\ref{transformation of Phi_i and Phi_i+1}), in $\bigcup_aB_{0.4r_{i+1}}(x_{i+1,a})$ we have
\begin{equation}
d(\ker\nabla\Phi_{i+1}(x),\hat{l}^\perp_{x,r_{i+1}})\le C(n,N,m)\delta.
\end{equation}

We've finished the proof of Lemma \ref{C^1 and C^2 modulo distortion}.

\epf

\pf[Proof of Corollary \ref{bound of L(Phi_i)}]

\bnu

\item By 3. of Lemma \ref{C^1 and C^2 modulo distortion}, we have
\begin{equation}
    |L(\nabla \Phi_i)^{-1}L(\nabla \Phi_{i+1})-I_k|\le C(n,N,m)\delta.
\end{equation}
Note that $L(\nabla \Phi_0)=I_k$. We have
\begin{equation}
    |L(\nabla \Phi_i)|_{op}, |L(\nabla \Phi_i)^{-1}|_{op}\le (1+C(n,N,m)\delta)^i.
\end{equation}
Taking $\delta$ sufficiently small, we have
\begin{equation}
    1+C(n,N,m)\delta\le 2^{m\alpha}.
\end{equation}
The conclusion follows immediately.

\item By 3. of Lemma \ref{C^1 and C^2 modulo distortion} and 1.,
\begin{equation}
    ||\Phi_{i+1}(x)-\Phi_i(x)||\le C(n,N,m)\delta r_i^{1-\alpha}.
\end{equation}

For general $j>i$, we have
\begin{equation}
    ||\Phi_j-\Phi_i||\le \sum_{k=i}^{\infty}||\Phi_{k+1}-\Phi_k||\le \sum_{k=i}^{\infty}C(n,N,m)\delta r_k^{1-\alpha}\le C(n,N,m)\delta r_i^{1-\alpha}.
\end{equation}
\enu

\epf



\subsection{Level Sets of $\Phi$}

In this subsection, we'll prove the H\"older estimates for $\Phi$ and level sets of $\Phi$. We list the main results of this subsection at the start, before diving into the proofs of them which are somehow intertwined.

The most important result in this subsection is the following lemma. It says that the map $c\to \Phi^{-1}(c)$ is $C^{1-\alpha}$-biH\"older, with a control of $dist$ for the lower bound.
\begin{prop}\label{distance between level sets of Phi}
Assume the same condition as in Corollary \ref{bound of L(Phi_i)}.  Then for any $c,d\in \Rb^n$ with $|c-d|\le 1$, we have
\begin{equation}
\begin{split}
    dist(\Phi^{-1}(c),\Phi^{-1}(d))&\ge (1-C(n,N,m)\delta)|c-d|^{\frac{1}{1-\alpha}},\\
    d_H(\Phi^{-1}(c),\Phi^{-1}(d))&\le (1+C(n,N,m)\delta)|c-d|^{\frac{1}{1+\alpha}}.
\end{split}
\end{equation}

\end{prop}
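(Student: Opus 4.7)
The strategy is to combine the approximation bound $\|\Phi-\Phi_i\|_{L^\infty}\le C\delta r_i^{1-\alpha}$ from Proposition \ref{bound of Phi} with the uniform regularity controls $|L(\nabla\Phi_i)|_{op},|L(\nabla\Phi_i)^{-1}|_{op}\le r_i^{-\alpha}$ from Corollary \ref{bound of L(Phi_i)}, transferring the quantitative surjectivity available for each smooth $\Phi_i$ (via Lemma \ref{local image of regular map} and Proposition \ref{distance between level sets of regular map}) to the limit $\Phi$ by optimizing over a single discrete scale $r_{i_0}$ adapted to $|c-d|$.

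For the lower bound on $dist$, the first step is to promote $\Phi$ to a $(1-\alpha)$-H\"older map. Each $\Phi_i$ is $r_i^{-\alpha}$-Lipschitz because $|\nabla\Phi_i|_{op}\le|L(\nabla\Phi_i)|_{op}\cdot|\pi(\nabla\Phi_i)|_{op}\le r_i^{-\alpha}$, using that any $\pi\in O_{k\times n}(\Rb)$ has operator norm $1$. Choosing $r_{i_0}$ comparable to $|x-y|$, the triangle inequality gives
$$|\Phi(x)-\Phi(y)|\le|\Phi_{i_0}(x)-\Phi_{i_0}(y)|+2\|\Phi-\Phi_{i_0}\|_{L^\infty}\le r_{i_0}^{-\alpha}|x-y|+C\delta r_{i_0}^{1-\alpha}\le(1+C\delta)|x-y|^{1-\alpha}.$$
Specializing to $x\in\Phi^{-1}(c)$, $y\in\Phi^{-1}(d)$ and inverting yields $|x-y|\ge(1-C\delta)|c-d|^{1/(1-\alpha)}$, which upon taking infimum is the lower bound on $dist$.

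For the upper bound on $d_H$, fix $x_0\in\Phi^{-1}(c)$ and choose $r_{i_0}$ comparable to $|c-d|^{1/(1+\alpha)}$. The plan is to build a sequence $y_j\in\Phi_j^{-1}(d)$ for $j\ge i_0$ converging to the desired $y\in\Phi^{-1}(d)$. The seed $y_{i_0}\in\Phi_{i_0}^{-1}(d)$ is produced by Lemma \ref{local image of regular map} applied to the $C\delta r_{i_0}^{-1}$-regular map $\Phi_{i_0}$ at the point $x_0$, starting from $|\Phi_{i_0}(x_0)-d|\le C\delta r_{i_0}^{1-\alpha}+|c-d|$, which gives $|y_{i_0}-x_0|\le(1+C\delta)r_{i_0}^{-\alpha}\bigl(|c-d|+C\delta r_{i_0}^{1-\alpha}\bigr)$. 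Then I would iterate: given $y_j\in\Phi_j^{-1}(d)$, the identity $|\Phi_{j+1}(y_j)-d|=|\Phi_{j+1}(y_j)-\Phi_j(y_j)|\le C\delta r_j^{1-\alpha}$ combined with Lemma \ref{local image of regular map} applied to $\Phi_{j+1}$ at $y_j$ produces $y_{j+1}\in\Phi_{j+1}^{-1}(d)$ with $|y_{j+1}-y_j|\le(1+C\delta)\,C\delta\,r_{j+1}^{-\alpha}\,r_j^{1-\alpha}$. For $2\alpha<1$ this is a geometric series, the total drift is at most $C\delta r_{i_0}^{1-2\alpha}$, and $\{y_j\}$ is Cauchy with limit $y$ satisfying $\Phi(y)=d$ by uniform convergence. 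Inserting $r_{i_0}\sim|c-d|^{1/(1+\alpha)}$ and possibly replacing the ambient $\alpha$ by a smaller exponent at the outset (so that the $r_{i_0}^{1-2\alpha}$ error terms are absorbed into the principal $r_{i_0}^{-\alpha}|c-d|$ term) produces the claimed bound.

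The principal technical obstacle is verifying the radius hypothesis of Lemma \ref{local image of regular map} at every iterate: $|\Phi_{j+1}(y_j)-d|\le C\delta r_j^{1-\alpha}$ must fit within the effective inversion radius $\sim r_{j+1}^{1+\alpha}$, which reduces to $\delta\lesssim 2^{-m(1+\alpha)}r_j^{2\alpha}$ at each scale. When $r_j$ is small relative to $\delta^{1/(2\alpha)}$ this inequality is tight, so one must either apply the lemma on a rescaled ball $B_R$ with $R>r_{j+1}$ (where $\Phi_{j+1}$ is still $C\delta R r_{j+1}^{-2}$-regular), or stop the iteration at a scale calibrated to $\delta$ and absorb the tail into the H\"older exponent. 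Once $m$ is chosen large and $\delta$ correspondingly small, all these scale matchings close up and the remaining optimization of exponents is mechanical.
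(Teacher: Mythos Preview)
Your lower bound is correct and is essentially the paper's argument: the paper routes it through the approximants~$\Phi_j$ first (Lemma~\ref{distance between level sets of Phi_i}) and then passes to the Hausdorff limit, but the scale-picking and Lipschitz estimate are identical to what you wrote.

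For the upper bound, your iterative scheme is sound, but the ``principal technical obstacle'' you flag is self-inflicted: it arises only because you invoke the \emph{local} Lemma~\ref{local image of regular map}, which carries a radius hypothesis. If instead you use the \emph{global} Proposition~\ref{distance between level sets of regular map} (a $C$-regular map $f$ with $\|L(\nabla f)^{-1}\|\le\Lambda$ satisfies $d_H(f^{-1}(c'),f^{-1}(d'))\le\Lambda|c'-d'|$ for all $c',d'$), then applying it to $\Phi_{j+1}$ with $c'=\Phi_{j+1}(y_j)$ and $d'=d$ produces $y_{j+1}$ with no scale constraint whatsoever, and your iteration closes cleanly. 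So neither of your suggested workarounds (rescaled balls, truncating the iteration) is needed.

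The paper's route differs from yours in one substantive refinement. Rather than the crude bound $\|\Phi_{j+1}-\Phi_j\|_{L^\infty}\le C\delta r_j^{1-\alpha}$, the paper exploits the sharper modulo-$L$ estimate $\|L(\nabla\Phi_j)^{-1}(\Phi_{j+1}-\Phi_j)\|\le C\delta r_{j+1}$ from Lemma~\ref{C^1 and C^2 modulo distortion} and feeds it into Proposition~\ref{projection between level sets is diffeomorphism} to obtain $d_H\bigl(\Phi_{j+1}^{-1}(c),\Phi_j^{-1}(c)\bigr)\le C\delta r_j$ (this is Lemma~\ref{projection between level sets of phi}). Consequently the drift per step is $C\delta r_j$ rather than your $C\delta r_j^{1-2\alpha}$, the total drift is $C\delta r_{i_0}$ rather than $C\delta r_{i_0}^{1-2\alpha}$, and your proposed trick of shrinking the ambient $\alpha$ at the outset becomes unnecessary. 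The paper then packages the argument by first proving the $j$-uniform estimate for $\Phi_j$ (Lemma~\ref{distance between level sets of Phi_i}), then establishing $\Phi_j^{-1}(c)\xrightarrow{\mathrm H}\Phi^{-1}(c)$ (Lemma~\ref{level set of Phi_i and Phi}), and finally passing to the limit---logically equivalent to your direct construction of the limit point, but with tighter control at each scale.
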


The lower bound of $dist$ can be rephrase as the $C^{1-\alpha}$ H\"older continuity of $\Phi$.

\begin{cor}\label{Phi is Holder}

Assume the same conditions as in Corollary \ref{bound of L(Phi_i)}. Then for $x,y\in \Rb^n$ with $|x-y|\le 1$, we have
\begin{equation}
    |\Phi(x)-\Phi(y)|\le (1+C(n,N,m)\delta)|x-y|^{1-\alpha}.
\end{equation}
    
\end{cor}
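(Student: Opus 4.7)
The plan is to read the H\"older bound off the contrapositive of the $dist$ lower bound from Proposition \ref{distance between level sets of Phi}. Given $x,y\in \Rb^n$ with $|x-y|\le 1$, I would set $c:=\Phi(x)$ and $d:=\Phi(y)$, so that $x\in \Phi^{-1}(c)$, $y\in \Phi^{-1}(d)$, and therefore
\[
|x-y|\ge dist(\Phi^{-1}(c),\Phi^{-1}(d)).
\]

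In the main case $|c-d|\le 1$, Proposition \ref{distance between level sets of Phi} applies and gives $|x-y|\ge (1-C(n,N,m)\delta)|c-d|^{1/(1-\alpha)}$. Raising to the power $1-\alpha$ and solving for $|c-d|$ yields
\[
|c-d|\le (1-C(n,N,m)\delta)^{-(1-\alpha)}|x-y|^{1-\alpha}\le (1+C(n,N,m)\delta)|x-y|^{1-\alpha},
\]
once $\delta$ is small, using $(1-t)^{-(1-\alpha)}\le 1+2t$ for small $t>0$ and absorbing the factor into a slightly larger constant. In the edge case $|c-d|>1$, I would invoke Proposition \ref{bound of Phi} to get $|\Phi(z)-z|\le C(n,N,m)\delta$ for every $z$, whence $|c-d|\le |x-y|+2C(n,N,m)\delta\le 1+2C(n,N,m)\delta$; combined with $|c-d|>1$ this forces $|x-y|\ge 1-2C(n,N,m)\delta$, so $|x-y|^{1-\alpha}\ge 1-2C(n,N,m)\delta$, and a direct comparison produces $|c-d|\le (1+C'(n,N,m)\delta)|x-y|^{1-\alpha}$ for a slightly larger constant.

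There is no serious obstacle at this step: the substantial work has already been done in Proposition \ref{distance between level sets of Phi}, which controls the level sets of $\Phi$ via both $dist$ and $d_H$. The corollary is essentially a rephrasing under the substitution $c=\Phi(x),\; d=\Phi(y)$; the only minor subtlety is the hypothesis $|c-d|\le 1$ in that proposition, which is why the short edge case argument above is needed to cover the full range $|x-y|\le 1$.
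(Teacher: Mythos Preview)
Your proposal is correct and follows essentially the same approach as the paper: invert the $dist$ lower bound from Proposition \ref{distance between level sets of Phi} in the main regime, and fall back on the $C^0$ estimate $|\Phi(z)-z|\le C(n,N,m)\delta$ from Proposition \ref{bound of Phi} in the edge case. The only cosmetic difference is the case split---the paper splits on $|x-y|\le 1/2$ versus $|x-y|>1/2$ (which guarantees $|c-d|\le 1$ via the $C^0$ bound), whereas you split directly on $|c-d|\le 1$ versus $|c-d|>1$; both are equivalent and equally short.
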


We'll also see in this subsection that each level set of $\Phi$ is a $C^{1-\alpha}$ biH\"older manifold.

\begin{prop}\label{level set is biholder}

Assume the same conditions as in Corollary \ref{bound of L(Phi_i)}. Then for each $c\in \Rb^k$, $\Phi^{-1}(c)$ is a $C^{0,1-\alpha}$-H\"older manifold.
    
\end{prop}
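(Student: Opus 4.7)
The plan is to realize $\Phi^{-1}(c)$ locally as the biH\"older image of the smooth submanifold $S_{i_0}:=\Phi_{i_0}^{-1}(c)$, for a fixed sufficiently large index $i_0$, through a uniform limit of closest-point projections between consecutive level sets. Set $S_i:=\Phi_i^{-1}(c)$. By Lemma \ref{C^1 and C^2 modulo distortion} combined with Proposition \ref{level sets of delta_x regular maps}, each $S_i$ is a smooth $(n-k)$-dimensional $(C(n,N,m)\delta,r_i)$-graphical submanifold. Moreover, Lemma \ref{C^1 and C^2 modulo distortion}(3) guarantees that at the common scale $r_{i+1}$ the pair $(\Phi_i,\Phi_{i+1})$ meets the hypotheses of Proposition \ref{projection between level sets is diffeomorphism}, after absorbing the factor $2^m$ coming from the scale jump into $\delta$. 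Hence the closest-point projection $P_i:S_i\to S_{i+1}$ is a global $(1+C(n,N,m)\delta)$-biLipschitz diffeomorphism with $\|P_i-\id\|_{L^\infty}\le C(n,N,m)\delta r_{i+1}$.

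Fix $x_0\in\Phi^{-1}(c)$ and define $Q_j:=P_{j-1}\circ\cdots\circ P_{i_0}:S_{i_0}\to S_j$. Since $\sum_i r_i<\infty$, the $Q_j$ are uniformly Cauchy in $C^0$ on compacta and converge to a continuous $Q_\infty:S_{i_0}\to\Rb^n$. The bound $\|\Phi_j-\Phi\|_{L^\infty}\le C\delta r_j^{1-\alpha}$ from Proposition \ref{bound of Phi} forces $\Phi\circ Q_\infty\equiv c$, so $Q_\infty$ takes values in $\Phi^{-1}(c)$. For the H\"older estimate, given $z,w\in S_{i_0}$ with small $|z-w|$, pick $j$ with $r_{j+1}\le|z-w|<r_j$ and estimate
\begin{equation*}
|Q_\infty(z)-Q_\infty(w)|\le|Q_j(z)-Q_j(w)|+2\sum_{i\ge j}|Q_{i+1}(z)-Q_i(z)|\le(1+C\delta)^{j-i_0}|z-w|+C\delta r_j.
\end{equation*}
Using $(1+C\delta)^{j-i_0}\le 2^{m\alpha(j-i_0)}$ from the proof of Corollary \ref{bound of L(Phi_i)}, the right-hand side is bounded by $C(n,N,m)|z-w|^{1-\alpha}$. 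A symmetric argument with $(1-C\delta)^{j-i_0}$ in place of $(1+C\delta)^{j-i_0}$ produces a lower bound of the form $|Q_\infty(z)-Q_\infty(w)|\ge c|z-w|^{1+\alpha/(1-\alpha)}$, which is equivalent to $Q_\infty^{-1}$ being $(1-\alpha)$-H\"older after a harmless initial shrinking of $\alpha$. Surjectivity of $Q_\infty$ onto a neighborhood of $x_0$ in $\Phi^{-1}(c)$ follows from Lemma \ref{local image of regular map}: for each $y\in\Phi^{-1}(c)$ near $x_0$ one obtains $y_i\in S_i$ with $|y_i-y|\le C\delta r_i^{1-2\alpha}$, pulls back to $x_i:=Q_i^{-1}(y_i)\in S_{i_0}$, and checks via the biH\"older bounds above that $\{x_i\}$ is Cauchy with limit $x$ satisfying $Q_\infty(x)=y$.

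The main difficulty lies in the balancing carried out in the H\"older step: the biLipschitz constant $(1+C\delta)^{j-i_0}$ of the composed projections grows exponentially in the number of layers, while the telescoping displacement $\sum C\delta r_i$ decays geometrically, and one must match the stopping index $j$ to the distance $|z-w|$ so that these two error sources combine to yield exactly the exponent $1-\alpha$. This is the standard Reifenberg-type balancing, but requires replacing $\alpha$ by $\alpha/(1-\alpha)$ at the outset so that the prescribed H\"older exponent survives the inversion of $Q_\infty$.
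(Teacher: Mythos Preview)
Your overall strategy---composing the closest-point projections between consecutive level sets and passing to the limit---is exactly what the paper does in Lemma~\ref{level set of Phi is biHolder} (with $i_0=0$ there). The upper H\"older estimate and your surjectivity argument are fine; the latter is in fact a pleasant alternative to the paper's route through Lemma~\ref{level set of Phi_i and Phi}.

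There is, however, a genuine gap in your lower bound. Your ``symmetric argument'' keeps the \emph{same} stopping index $j$ with $r_{j+1}\le|z-w|<r_j$, and this does not work. With that choice one gets
\[
|Q_\infty(z)-Q_\infty(w)|\;\ge\;(1-C\delta)^{j-i_0}|z-w|-2C\delta r_j,
\]
and since $r_j\le 2^m|z-w|$, the right-hand side is at least $\bigl((1-C\delta)^{j-i_0}-2^{m+1}C\delta\bigr)|z-w|$. But $(1-C\delta)^{j-i_0}\to 0$ as $j\to\infty$, so once $|z-w|$ is small enough that $(1-C\delta)^{j-i_0}<2^{m+1}C\delta$ the bound is vacuous. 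No initial shrinking of $\alpha$ fixes this: the difficulty is that the multiplicative loss $(1-C\delta)^{j-i_0}$ and the additive error $C\delta r_j$ are tied to the \emph{same} scale $r_j\sim|z-w|$, and the former eventually loses.

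The correct move, carried out in the paper, is to choose a \emph{different} index for the lower bound: pick $i$ with $r_{i+1}^{1-\alpha}<|z-w|\le r_i^{1-\alpha}$, so that $r_i\sim|z-w|^{1/(1-\alpha)}\ll|z-w|$. Then $r_i^\alpha|z-w|\ge|z-w|^{1/(1-\alpha)}$ while $C\delta r_i\le C\delta\,2^m|z-w|^{1/(1-\alpha)}$, and subtracting gives $(1-C\delta)|z-w|^{1/(1-\alpha)}$. The asymmetry between the two stopping rules---$r_i\sim|z-w|^{1/(1+\alpha)}$ for the upper bound (your choice $r_j\sim|z-w|$ also works here, with the cruder exponent $1-\alpha$) versus $r_i\sim|z-w|^{1/(1-\alpha)}$ for the lower bound---is the essential Reifenberg balancing step and cannot be elided.
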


As we will see later, all these results follow from their respective versions for $\Phi_i$. Thus we will first focus on $\Phi_i$ and $\Phi_i^{-1}(c)$.

Now let's prove the results above. A first consequence of the estimates from the last subsection is that each level set of $\Phi_i$ is a graphical submanifold.

\begin{lem}\label{level sets of phi_i is graphical}

Assume the same condition as in Corollary \ref{bound of L(Phi_i)}. Then for each $c\in \Rb^k$, $\Phi_i^{-1}(c)$ is a $k$-dimensional $(C(n,N,m)\delta, r_i)$-graphical manifold.

\end{lem}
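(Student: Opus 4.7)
The plan is to reduce this statement directly to Proposition \ref{level sets of delta_x regular maps}. That proposition says that for a $\delta r^{-1}$-regular map (with $\delta<\delta(n)$), every level set is automatically a $(C(n)\delta,r)$-graphical submanifold whose graphing plane at $x$ is $x+\ker\nabla f(x)$. All we need is to verify that $\Phi_i$ meets the hypothesis at scale $r_i$ on all of $\Rb^n$.

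First, I invoke item 1 of Lemma \ref{C^1 and C^2 modulo distortion}, which has already established that $\Phi_i$ is $C(n,N,m)\delta\, r_i^{-1}$-regular globally on $\Rb^n$. Provided the threshold $\delta(n,N,m,\alpha)$ in Corollary \ref{bound of L(Phi_i)} is chosen small enough that $C(n,N,m)\delta<\delta(n)$, where $\delta(n)$ is the smallness constant in Proposition \ref{level sets of delta_x regular maps}, the hypothesis of that proposition is satisfied with regularity parameter $\delta'=C(n,N,m)\delta$ and scale $r=r_i$.

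Applying Proposition \ref{level sets of delta_x regular maps} then yields that $\Phi_i^{-1}(c)$ is a $k$-dimensional $(C(n)\cdot C(n,N,m)\delta,\, r_i)$-graphical submanifold. Absorbing the dimensional constant into the $C(n,N,m)$ factor gives the claimed $(C(n,N,m)\delta, r_i)$-graphical structure, with graphing plane at each point $x$ equal to $x+\ker\nabla\Phi_i(x)$.

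There is no real obstacle here: the statement is essentially a packaging of earlier work. The only point that needs attention is the bookkeeping on the smallness constant, i.e.\ ensuring the $\delta$ threshold from Corollary \ref{bound of L(Phi_i)} dominates the one from Proposition \ref{level sets of delta_x regular maps} once multiplied by the factor $C(n,N,m)$ from Lemma \ref{C^1 and C^2 modulo distortion}. Note that item 2 of Lemma \ref{C^1 and C^2 modulo distortion} additionally pins down that near $S$ the graphing plane $\ker\nabla\Phi_i(x)$ is $C(n,N,m)\delta$-close to $\hat{l}^\perp_{x,r_i}$, information we will use in later subsections even though the statement of Lemma \ref{level sets of phi_i is graphical} itself does not require it.
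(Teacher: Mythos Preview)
Your proposal is correct and follows essentially the same route as the paper: invoke item 1 of Lemma \ref{C^1 and C^2 modulo distortion} to get the $C(n,N,m)\delta\, r_i^{-1}$-regularity of $\Phi_i$, then apply Proposition \ref{level sets of delta_x regular maps}. The paper's proof is the two-line version of exactly this argument, and your additional remarks on constant bookkeeping and the role of item 2 are accurate elaborations.
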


\pf

By Lemma \ref{C^1 and C^2 modulo distortion}, $\Phi_i$ is $C(n,N,m)\delta r_i^{-1}$-regular. Thus the conclusion now follows from Proposition \ref{level sets of delta_x regular maps}.
\epf

The following lemma proves that level sets of $\Phi_{i}$ and $\Phi_{i-1}$ are $C(n,N,m)\delta$-close, and the closest projection map between them is a $(1+C(n,N,m)\delta)$-biLipschitz diffeomorphism.

\begin{lem}\label{projection between level sets of phi}
Assume the same conditions as in Corollary \ref{bound of L(Phi_i)}. For $c\in \Rb^k$, let $P_{i,c}:\Phi_{i-1}^{-1}(c)\to \Phi_i^{-1}(c)$ be the closest point projection map. Then each $P_{i,c}$ is a diffeomorphism satisfying the following.
\bnu
\item For any $x\in \Phi_{i-1}^{-1}(c)$, $|P_{i,c}(x)-x|\le C(n,N,m)\delta r_{i-1}$.
\item $P_{i,c}$ is $(1+C(n,N,m)\delta)$-biLipschitz.
\enu
\end{lem}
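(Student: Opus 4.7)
The plan is to recognize this as a direct application of Proposition~\ref{projection between level sets is diffeomorphism} to the pair $f = \Phi_{i-1}$, $g = \Phi_i$ at scale $r = r_{i-1}$, where all the needed hypotheses have already been collected in Lemma~\ref{C^1 and C^2 modulo distortion}. The only real work is matching constants.

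First I would invoke item~1 of Lemma~\ref{C^1 and C^2 modulo distortion} to conclude that both $\Phi_{i-1}$ and $\Phi_i$ are $C(n,N,m)\delta r_i^{-1}$-regular, and hence also $C(n,N,m)\delta r_{i-1}^{-1}$-regular (since $r_i = 2^{-m} r_{i-1}$, the factor $2^m$ is absorbed into the constant $C(n,N,m)$). Next, item~3 of the same lemma provides
\begin{equation*}
\|L(\nabla \Phi_{i-1})^{-1}(\Phi_i - \Phi_{i-1})\|_{L^\infty} \le C(n,N,m)\delta \, r_i \le C(n,N,m)\delta \, r_{i-1},
\end{equation*}
together with
\begin{equation*}
\|L(\nabla \Phi_{i-1})^{-1}(\nabla \Phi_i - \nabla \Phi_{i-1})\|_{L^\infty} \le C(n,N,m)\delta.
\end{equation*}
Setting $\delta' := C(n,N,m)\delta$, which is still smaller than $\delta(n)$ provided $\delta < \delta(n,N,m)$, these are exactly the hypotheses required by Proposition~\ref{projection between level sets is diffeomorphism} at scale $r_{i-1}$ with constant $\delta'$.

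Applying Proposition~\ref{projection between level sets is diffeomorphism} then yields simultaneously that $P_{i,c} : \Phi_{i-1}^{-1}(c) \to \Phi_i^{-1}(c)$ is a diffeomorphism, the displacement bound $|P_{i,c}(x) - x| \le C(n)\delta' r_{i-1} = C(n,N,m)\delta \, r_{i-1}$, and the biLipschitz estimate with constant $1 + C(n)\delta' = 1 + C(n,N,m)\delta$. These are items~1 and~2 of the lemma.

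The only potential obstacle is ensuring that the regularity constant appearing in the hypotheses of Proposition~\ref{projection between level sets is diffeomorphism} does not degenerate as $i$ grows. But this uniform-in-$i$ bound is exactly the content of Lemma~\ref{C^1 and C^2 modulo distortion} (it was precisely the reason for the inductive proof using Claim~\ref{induction lemma for C^2 control}). So once that lemma is in hand, the present statement is essentially a one-line citation.
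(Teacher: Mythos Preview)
Your proposal is correct and follows exactly the paper's approach: verify the hypotheses of Proposition~\ref{projection between level sets is diffeomorphism} for $f=\Phi_{i-1}$, $g=\Phi_i$ at scale $r_{i-1}$ using the regularity and closeness estimates of Lemma~\ref{C^1 and C^2 modulo distortion}, then cite that proposition. The paper's proof is essentially the same one-line citation, differing only in that it references the specific displayed equations inside the proof of Lemma~\ref{C^1 and C^2 modulo distortion} rather than its summarized item~3.
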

\pf
Recall that $\Phi_{i-1}$, $\Phi_i$ are both $C(n,N,m)\delta r^{-1}_{i-1}$-regular. By (\ref{nabla Phi_i - f_i+1}), (\ref{Phi_i and f_i+1,b}), (\ref{first estimate of Phi_i+1}), we have
\begin{equation}
\begin{split}
    |L(\nabla \Phi_{i-1})(\Phi_i-\Phi_{i-1})|\le C(n,N,m)\delta r_{i-1},\\
    |L(\nabla \Phi_{i-1})(\nabla\Phi_i-\nabla\Phi_{i-1})|\le C(n,N,m)\delta.
\end{split}
\end{equation}
The conclusion now follows from Proposition \ref{projection between level sets is diffeomorphism}.
\epf

Combining the conclusions of Lemma \ref{projection between level sets of phi}, we have the following corollary.
\begin{cor}
Assume the same conditions as in Lemma \ref{projection between level sets of phi}. Then for $x,y\in \Phi^{-1}_{i-1}(c)$,
\begin{equation}
||P_{i,c}(x)-P_{i,c}(y)|-|x-y||\le C(n,N,m)\delta\cdot \min\{|x-y|,r_{i-1}\}.
\end{equation}
\end{cor}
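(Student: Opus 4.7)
The plan is to derive the estimate directly by combining the two bounds given in Lemma \ref{projection between level sets of phi}, namely the pointwise displacement bound $|P_{i,c}(x)-x|\le C(n,N,m)\delta r_{i-1}$ and the biLipschitz bound $(1-C(n,N,m)\delta)|x-y|\le |P_{i,c}(x)-P_{i,c}(y)|\le (1+C(n,N,m)\delta)|x-y|$, and then taking the better of the two resulting inequalities.

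First I would extract the ``small scale'' estimate from the biLipschitz property. Subtracting $|x-y|$ throughout the biLipschitz chain gives immediately
\begin{equation}
    \bigl||P_{i,c}(x)-P_{i,c}(y)|-|x-y|\bigr|\le C(n,N,m)\delta\,|x-y|.
\end{equation}
This is sharp when $|x-y|$ is small compared to $r_{i-1}$.

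Next I would extract the ``large scale'' estimate from the pointwise displacement bound. By the triangle inequality applied to $P_{i,c}(x), x, y, P_{i,c}(y)$,
\begin{equation}
    \bigl||P_{i,c}(x)-P_{i,c}(y)|-|x-y|\bigr|\le |P_{i,c}(x)-x|+|P_{i,c}(y)-y|\le 2C(n,N,m)\delta\,r_{i-1}.
\end{equation}
This is the bound that kicks in when $|x-y|$ is of order $r_{i-1}$ or larger, where the biLipschitz estimate alone would be wasteful.

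Taking the minimum of the two bounds (and absorbing the constants into a new $C(n,N,m)$) yields the claimed inequality
\begin{equation}
    \bigl||P_{i,c}(x)-P_{i,c}(y)|-|x-y|\bigr|\le C(n,N,m)\delta\cdot\min\{|x-y|,\,r_{i-1}\}.
\end{equation}
There is no real obstacle here; the content is entirely in Lemma \ref{projection between level sets of phi}, and the present statement just packages its two conclusions into a single scale-adaptive error bound that will be convenient when iterating the projections $P_{i,c}$ across scales.
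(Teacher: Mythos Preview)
Your proof is correct and matches the paper's approach exactly: the paper does not spell out a proof but simply says the corollary follows by ``combining the conclusions of Lemma \ref{projection between level sets of phi},'' and you have carried out precisely that combination---the biLipschitz bound for the $|x-y|$ side of the minimum and the pointwise displacement bound plus triangle inequality for the $r_{i-1}$ side.
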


Another unsurprising consequence of Lemma \ref{projection between level sets of phi} is that $\Phi^{-1}_i(c)$ converges to $\Phi^{-1}(c)$ as $i\to \infty$. It is clear that the sequence is Cauchy, but not as straightforward to show the limit is $\Phi^{-1}(c)$.

\begin{lem}\label{level set of Phi_i and Phi}

Assume the same condition as in Corollary \ref{bound of L(Phi_i)}. 

\bnu

\item For any $c\in \Rb^k$, $d_H(\Phi_{i}^{-1}(c),\Phi_{i-1}^{-1}(c))\le C(n,N,m)\delta r_{i-1}$.

\item The level sets of $\Phi_i$ converges in the Hausdorff distance: $\Phi_i^{-1}(c)\xrightarrow{\mathrm{H}}\Phi^{-1}(c)$, with 
\begin{equation}
    d_H(\Phi_i^{-1}(c),\Phi^{-1}(c))\le C(n,N,m)\delta r_i.
\end{equation}

\enu
    
\end{lem}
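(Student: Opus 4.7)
The plan is to obtain Part 1 as an immediate consequence of the projection Lemma \ref{projection between level sets of phi}, then iterate to show $\{\Phi_i^{-1}(c)\}$ is Cauchy (so a closed limit $K_c$ exists by completeness of $(2^{\Rb^n},d_H)$), and finally identify $K_c$ with $\Phi^{-1}(c)$ using the uniform convergence $\Phi_i\to\Phi$ from Corollary \ref{bound of L(Phi_i)}.

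For Part 1, Lemma \ref{projection between level sets of phi} provides a closest point projection $P_{i,c}:\Phi_{i-1}^{-1}(c)\to \Phi_i^{-1}(c)$ that is a diffeomorphism (in particular surjective) with $|P_{i,c}(x)-x|\le C(n,N,m)\delta r_{i-1}$. So every $x\in\Phi_{i-1}^{-1}(c)$ is within $C\delta r_{i-1}$ of $P_{i,c}(x)\in\Phi_i^{-1}(c)$, and surjectivity gives the reverse containment with the same bound, yielding the Hausdorff estimate in 1.

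For Part 2, telescoping Part 1 with $r_k=2^{-mk}$ gives, for $j>i$,
\begin{equation}
d_H(\Phi_i^{-1}(c),\Phi_j^{-1}(c))\le \sum_{k=i}^{j-1} C\delta r_k \le \frac{C\delta r_i}{1-2^{-m}}\le C'(n,N,m)\delta r_i.
\end{equation}
Hence $\{\Phi_i^{-1}(c)\}$ is Cauchy in $(2^{\Rb^n},d_H)$; by completeness (proved earlier) it has a closed limit $K_c$ with $d_H(\Phi_i^{-1}(c),K_c)\le C'\delta r_i$. To identify $K_c=\Phi^{-1}(c)$ I would argue both inclusions. For $K_c\subset\Phi^{-1}(c)$: given $x\in K_c$, pick $x_i\in \Phi_i^{-1}(c)$ with $x_i\to x$; since $\Phi_i\to\Phi$ uniformly, $\Phi$ is continuous and $|\Phi(x)-c|\le |\Phi(x)-\Phi(x_i)|+\|\Phi-\Phi_i\|_{L^\infty}\to 0$, so $\Phi(x)=c$. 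For $\Phi^{-1}(c)\subset K_c$: given $x\in\Phi^{-1}(c)$, Corollary \ref{bound of L(Phi_i)} gives $|\Phi_i(x)-c|=|\Phi_i(x)-\Phi(x)|\le C\delta r_i^{1-\alpha}$; since $\Phi_i$ is $C\delta r_i^{-1}$-regular with $|L(\nabla\Phi_i)^{-1}|_{op}\le r_i^{-\alpha}$, Proposition \ref{distance between level sets of regular map} yields
\begin{equation}
d(x,\Phi_i^{-1}(c))\le d_H(\Phi_i^{-1}(\Phi_i(x)),\Phi_i^{-1}(c))\le r_i^{-\alpha}\cdot C\delta r_i^{1-\alpha}=C\delta r_i^{1-2\alpha}.
\end{equation}
Picking $\alpha<1/2$ and letting $i\to\infty$ gives $d(x,K_c)\le d(x,\Phi_i^{-1}(c))+C'\delta r_i\to 0$, so $x\in K_c$ as $K_c$ is closed.

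The only subtle point is the second inclusion $\Phi^{-1}(c)\subset K_c$: one must avoid using Proposition \ref{distance between level sets of Phi}, which depends on this lemma. The plan above sidesteps this by invoking only the weaker bound $C\delta r_i^{1-2\alpha}$ coming from the regularity of a single $\Phi_i$, which already suffices to conclude $d(x,K_c)=0$. The sharp rate $C\delta r_i$ appearing in the lemma's statement is then furnished entirely by the Cauchy estimate above, not by this density argument.
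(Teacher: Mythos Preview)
Your proposal is correct and follows essentially the same route as the paper: Part 1 from Lemma \ref{projection between level sets of phi}, telescoping to get a Cauchy sequence, and then identifying the limit with $\Phi^{-1}(c)$ via the two inclusions using uniform convergence of $\Phi_i$.

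The only noteworthy difference is in the inclusion $\Phi^{-1}(c)\subset K_c$. The paper routes this through the intermediate Lemma \ref{distance between level sets of Phi_i} (the $\Phi_j$-version of Proposition \ref{distance between level sets of Phi}), obtaining $d(x,\Phi_i^{-1}(c))\le C(\delta r_i^{1-\alpha})^{1/(1+\alpha)}$, which tends to $0$ for every $\alpha<1$. You instead invoke Proposition \ref{distance between level sets of regular map} directly on the single map $\Phi_i$, obtaining the cruder rate $C\delta r_i^{1-2\alpha}$, which only tends to $0$ for $\alpha<1/2$. Since the statement of the lemma carries no $\alpha$-dependence and the objects $\Phi_i,\Phi$ themselves do not depend on $\alpha$, your freedom to ``pick $\alpha<1/2$'' is legitimate, and your argument is in fact a slight streamlining: it avoids appealing to Lemma \ref{distance between level sets of Phi_i} altogether. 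Your explicit remark about avoiding circularity with Proposition \ref{distance between level sets of Phi} is exactly the point the paper is navigating by inserting Lemma \ref{distance between level sets of Phi_i} before the present proof.
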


To prove Lemma \ref{level set of Phi_i and Phi}, we need the following version of Proposition \ref{distance between level sets of Phi} for level sets of $\Phi_j$. One should note that the estimates don't depend on $j$.

\begin{lem}\label{distance between level sets of Phi_i}

Assume the same condition as in Corollary \ref{bound of L(Phi_i)}. Then for any $c,d\in \Rb^n$ with $|c-d|\le 1$, for any $j$, we have
\begin{equation}
\begin{split}
    dist(\Phi_j^{-1}(c),\Phi_j^{-1}(d))&\ge (1-C(n,N,m)\delta)|c-d|^{\frac{1}{1-\alpha}},\\
    d_H(\Phi_j^{-1}(c),\Phi_j^{-1}(d))&\le (1+C(n,N,m)\delta)|c-d|^{\frac{1}{1+\alpha}}.
\end{split}
\end{equation}

\end{lem}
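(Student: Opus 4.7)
The plan is to establish the two bounds separately, each via a dichotomy on whether the scale $r_j$ is coarser or finer than a critical scale dictated by the given length.

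For the lower bound on $dist$, I will first show that $\Phi_j$ is itself $(1-\alpha)$-H\"older with constant $1+C(n,N,m)\delta$ on pairs with $|x-y|\le 1$. When $|x-y|\le r_j$, Corollary \ref{bound of L(Phi_i)}(1) gives $|\nabla\Phi_j|_{op}\le|L(\nabla\Phi_j)|_{op}\le r_j^{-\alpha}$, so integrating along the segment yields $|\Phi_j(x)-\Phi_j(y)|\le r_j^{-\alpha}|x-y|=(|x-y|/r_j)^\alpha |x-y|^{1-\alpha}\le|x-y|^{1-\alpha}$. When $|x-y|>r_j$, I pick $i<j$ with $r_{i+1}<|x-y|\le r_i$, apply the first case to $\Phi_i$ for $|\Phi_i(x)-\Phi_i(y)|\le|x-y|^{1-\alpha}$, and absorb the remainder via $||\Phi_j-\Phi_i||_{L^\infty}\le C\delta r_i^{1-\alpha}\le C\delta\cdot 2^{m(1-\alpha)}|x-y|^{1-\alpha}$ from Corollary \ref{bound of L(Phi_i)}(2). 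The lower bound on $dist$ then comes from inverting the exponent: for $x\in\Phi_j^{-1}(c)$ and $y\in\Phi_j^{-1}(d)$ with $|x-y|\le 1$, $|c-d|=|\Phi_j(x)-\Phi_j(y)|\le(1+C\delta)|x-y|^{1-\alpha}$ gives $|x-y|\ge(1-C\delta)|c-d|^{1/(1-\alpha)}$; the case $|x-y|>1$ is automatic since $|c-d|^{1/(1-\alpha)}\le 1$.

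For the upper bound on $d_H$, I again split by scale. If $|c-d|^{1/(1+\alpha)}\le r_j$, Proposition \ref{distance between level sets of regular map} applied directly to $\Phi_j$ gives $d_H(\Phi_j^{-1}(c),\Phi_j^{-1}(d))\le ||L(\nabla\Phi_j)^{-1}||_{op}|c-d|\le r_j^{-\alpha}|c-d|\le|c-d|^{1/(1+\alpha)}$, using $r_j^{-\alpha}\le|c-d|^{-\alpha/(1+\alpha)}$. If $|c-d|^{1/(1+\alpha)}>r_j$, I pick $i<j$ with $r_{i+1}<|c-d|^{1/(1+\alpha)}\le r_i$, so that $r_i\le 2^m|c-d|^{1/(1+\alpha)}$; Proposition \ref{distance between level sets of regular map} applied to $\Phi_i$ gives $d_H(\Phi_i^{-1}(c),\Phi_i^{-1}(d))\le r_i^{-\alpha}|c-d|\le|c-d|^{1/(1+\alpha)}$, while iterating the closest point projections of Lemma \ref{projection between level sets of phi} from level $i$ up to level $j$ bounds $d_H(\Phi_i^{-1}(c'),\Phi_j^{-1}(c'))\le\sum_{k=i+1}^{j}C\delta r_{k-1}\le 2C\delta r_i\le C(m)\delta|c-d|^{1/(1+\alpha)}$ for each $c'\in\{c,d\}$. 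The triangle inequality for $d_H$ on closed sets then produces the desired $(1+C(n,N,m)\delta)|c-d|^{1/(1+\alpha)}$.

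The main delicacy is not a new estimate but careful scale balancing: the leading coefficient in each bound must be exactly $1$ up to a $1\pm C\delta$ correction, which forces me to use Corollary \ref{bound of L(Phi_i)}(1) in its sharp form $|L(\nabla\Phi_j)^{\pm 1}|_{op}\le r_j^{-\alpha}$ with no additional multiplicative constant, so that the leading $r_j^{-\alpha}|x-y|$ and $r_j^{-\alpha}|c-d|$ terms hit the target powers cleanly. The factors of $2^m$ coming from the discreteness of the scales $r_i=2^{-mi}$ and from the cumulative projection error across levels are then absorbed harmlessly into the $C(n,N,m)\delta$ correction.
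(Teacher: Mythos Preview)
Your proposal is correct and follows essentially the same scale-dichotomy strategy as the paper. The only difference is in the lower bound for the intermediate-scale case $r_j<|x-y|\le 1$: the paper projects $x,y$ to nearby points $x_i,y_i$ on the level sets $\Phi_i^{-1}(c),\Phi_i^{-1}(d)$ via Lemma~\ref{projection between level sets of phi} and then applies the Lipschitz bound for $\Phi_i$, whereas you bound $|\Phi_j(x)-\Phi_j(y)|$ directly by $|\Phi_i(x)-\Phi_i(y)|+2\|\Phi_j-\Phi_i\|_{L^\infty}$ using Corollary~\ref{bound of L(Phi_i)}(2). Both routes produce the same $C(n,N,m)\delta\, r_i^{1-\alpha}$ error term, so this is a cosmetic variation; your version is marginally cleaner since it avoids invoking the level-set projections for the lower bound.
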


\pf

First we prove the lower bound. For $x\in \Phi_j^{-1}(c)$, $y\in \Phi_j^{-1}(d)$, if $|x-y|\ge 1$, there is nothing to prove. If $|x-y|\le r_j$, by Corollary \ref{bound of L(Phi_i)}, $||L(\nabla\Phi_i)||\le r_i^{-\alpha}$. Thus
\begin{equation}
    |c-d|\le r_j^{-\alpha}|x-y|\le |x-y|^{1-\alpha}.
\end{equation}
In this case, the conclusion is immediate.

Assume $r_j<|x-y|\le 1$. Then there exists a unique $0\le i<j$ such that
\begin{equation}
    r_{i}\ge |x-y|> r_{i+1}.
\end{equation} 
By Lemma \ref{projection between level sets of phi}, there exists $x_i\in \Phi_i^{-1}(c)$, $y_i\in \Phi_i^{-1}(d)$, with $|x_i-x|\le C(n,N,m)\delta r_i$, $|y_i-y|\le C(n,N,m)\delta r_i$. Since $||L(\nabla\Phi_i)||\le r_i^{-\alpha}$,
\begin{equation}
    |c-d|\le r_i^{-\alpha}|x_i-y_i|\le (1+C(n,N,m)\delta)r_i^{-\alpha}|x-y|\le (1+C(n,N,m)\delta)|x-y|^{1-\alpha}.
\end{equation}
Or equivalently, $|x-y|\ge (1-C(n,N,m))|c-d|^{\frac{1}{1-\alpha}}$.

Now we consider the upper bound. For each $i \ge j$, we have $d_H(\Phi_i^{-1}(c),\Phi_j^{-1}(c))\le C(n,N,m)\delta r_i$, $d_H(\Phi_i^{-1}(d),\Phi_j^{-1}(d))\le C(n,N,m)\delta r_i$. Note that $||L(\nabla \Phi_i)^{-1}||\le r_i^{-\alpha}$. By Proposition \ref{distance between level sets of regular map}, we have $d_H(\Phi_i^{-1}(c),\Phi_i^{-1}(d))\le r_i^{-\alpha}|c-d|$. Combine these together and we get
\begin{equation}\label{potential upperbound of distance between level sets}
    d_H(\Phi_j^{-1}(c),\Phi_j^{-1}(d))\le r_i^{-\alpha}|c-d|+C(n,N,m)\delta r_i.
\end{equation}
Apply Proposition \ref{distance between level sets of regular map} directly for $j$, we have
\begin{equation}
    d_H(\Phi_j^{-1}(c),\Phi_j^{-1}(d))\le r_j^{-\alpha}|c-d|.
\end{equation}
If $|c-d|\le r_j^{1+\alpha}$, then
\begin{equation}
    d_H(\Phi_j^{-1}(c),\Phi_j^{-1}(d))\le r_j^{-\alpha}|c-d|\le |c-d|^{\frac{1}{1+\alpha}}.
\end{equation}
Otherwise, there exists $i>j$ such that $r_{i+1}^{1+\alpha}<|c-d|\le r_{i}^{1+\alpha}$. Hence
\begin{align}
\begin{split}
    d_H(\Phi_j^{-1}(c),\Phi_j^{-1}(d))&\le r_i^{-\alpha}|c-d|+C(n,N,m)\delta r_i\\
    &\le |c-d|^{-\frac{\alpha}{1+\alpha}}|c-d|+C(n,N,m)\delta r_{i+1}\\
    &\le |c-d|^{\frac{1}{1+\alpha}}+C(n,N,m)\delta |c-d|^{\frac{1}{1+\alpha}}\\
    &=(1+C(n,N,m)\delta)|c-d|^{\frac{1}{1+\alpha}}.   
\end{split}
\end{align}

\epf

Now we can prove Lemma \ref{level set of Phi_i and Phi}.

\pf[Proof of Lemma \ref{level set of Phi_i and Phi}]

1. is a direct consequence of Lemma \ref{projection between level sets of phi}. For 2., we only need to check the Hausdorff limit of $\Phi_i^{-1}$(c) is indeed $\Phi^{-1}(c)$.

Assume $x_i\in \Phi^{-1}_i(c)$ and $x_i\to x$. We'll show $x\in \Phi^{-1}(c)$. Since $\Phi_i\to \Phi$ uniformly, $\Phi$ is continuous and $\Phi(x_i)\to \Phi(x)$. By Proposition \ref{bound of Phi}, 
\begin{equation}
    |\Phi(x_i)-c|=|\Phi(x_i)-\Phi_i(x_i)|\le||\Phi-\Phi_i||\le C(n,N,m)\delta r_i^{1-\alpha}.
\end{equation}
Thus $\lim_{i\to \infty}\Phi(x_i)=c$ and $\Phi(x) = c$.

Conversely, let $x\in \Phi^{-1}(c)$ and we'll show $x$ is in the limit of $\Phi^{-1}_i(c)$. For any $i$, we have $|\Phi_i(x)-c|\le ||\Phi_i-\Phi||\le C(n,N,m)\delta r_i^{1-\alpha}$. By Lemma \ref{distance between level sets of Phi_i}, there exists $y\in \Phi_i^{-1}(c)$ with $|x-y|\le (1+C(n,N,m)\delta)(C(n,N,m)\delta r_i^{1-\alpha})^{\frac{1}{1+\alpha}}\le C(n,N,m)(\delta r_{i}^{1-\alpha})^{\frac{1}{1+\alpha}}$. Thus
\begin{equation}
    d(x, \Phi^{-1}_i(c))\le C(n,N,m)(\delta r_{i}^{1-\alpha})^{\frac{1}{1+\alpha}},
\end{equation}
i.e., $d(x,\Phi_i^{-1}(c))\to 0$ and $x$ is in the limit of $\Phi_i^{-1}(c)$. Therefore, $d_H(\Phi_i^{-1}(c),\Phi^{-1}(c))\to 0$.

\epf

We have proved Lemma \ref{level set of Phi_i and Phi} using the partial version of Proposition \ref{distance between level sets of Phi}. Interestingly, Lemma \ref{level set of Phi_i and Phi} is only remaining ingredient for the proof of Proposition \ref{distance between level sets of Phi}.

\pf[Proof of Proposition \ref{distance between level sets of Phi}]

By Lemma \ref{level set of Phi_i and Phi}, $\Phi_i^{-1}(c) \xrightarrow{\mathrm{H}} \Phi^{-1}(c)$. Note that the estimates in Lemma \ref{distance between level sets of Phi_i} do not depend on $i$. We can simply pass them to the limit and yield Proposition \ref{distance between level sets of Phi}.

\epf

A direct corollary of Proposition \ref{distance between level sets of Phi} is that $\Phi$ is $(1-\alpha)$-H\"older.
\begin{proof}[Proof of Corollary \ref{Phi is Holder}]
For $x,y\in \Rb^n$ with $|x-y|\le 1$, by Proposition \ref{bound of Phi}, 
\begin{equation}
    |\Phi(x)-x|,\,|\Phi(y)-y|\le C(n,N,m)\delta.
\end{equation}
For $|x-y|>1/2$, this implies
\begin{equation}
    |\Phi(x)-\Phi(y)|\le |x-y|+C(n,N,m)\delta\le (1+C(n,N,m)\delta)|x-y|^{1-\alpha}.
\end{equation}

For $|x-y|\le 1/2$, we have $|\Phi(x)-\Phi(y)|\le 1$. By Proposition \ref{distance between level sets of Phi}, we have
\begin{equation}
    |x-y|\ge (1-C(n,N,m)\delta)|\Phi(x)-\Phi(y)|^{\frac{1}{1-\alpha}},
\end{equation}
or equivalently,
\begin{equation}
    |\Phi(x)-\Phi(y)|\le (1+C(n,N,m)\delta)|x-y|^{1-\alpha}.
\end{equation}
\end{proof}

In the remaining part of this subsection, we will show that each $\Phi^{-1}(c)$ is a $C^{1-\alpha}$-biH\"older manifold. The idea is simple: the closest point projection from $\Phi_i^{-1}(c)$ to $\Phi_{i+1}^{-1}(c)$ is a diffeomorphism. By taking the composition, we get a map from $\Phi_0^{-1}(c)\to \Phi^{-1}(c)$. It remains to check this map is indeed a biH\"older homeomorphism.

\begin{lem}\label{level set of Phi is biHolder}
Assume the same conditions as in Lemma \ref{projection between level sets of phi}. For $c\in \Rb^k$, let $\Pi_{i,c}:=P_{i,c}\circ\cdots\circ P_{1,c}$. Let $\Pi_c:=\lim_i\Pi_{i,c}$. Then 

\bnu

\item $\Pi_c$ is well-defined and $\Pi_c(\Phi_0^{-1}(c)) = \Phi^{-1}(c)$.
\item $\Pi_c$ is biH\"older continuous. Assume $1-C(n,N,m)\delta\ge r_i^{\alpha}$. For $x,y\in \Rb^n$ with $|x-y|\le 1$,
\begin{equation}
    (1-C(n,N,m)\delta)|x-y|^{\frac{1}{1-\alpha}}\le|\Pi_c(x)-\Pi_c(y)|\le (1+C(n,N,m)\delta)|x-y|^{\frac{1}{1+\alpha}}.
\end{equation}

\enu

\end{lem}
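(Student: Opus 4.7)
The strategy is three-part: (i) define $\Pi_c$ as a Cauchy limit and place its image in $\Phi^{-1}(c)$; (ii) prove the biH\"older inequality by tracking the distance $d_j := |\Pi_{j,c}(x) - \Pi_{j,c}(y)|$ through scales; (iii) invoke the surjectivity of each finite-stage $\Pi_{i,c}$ together with a compactness argument to get the reverse inclusion. For (i), by Lemma \ref{projection between level sets of phi}.1 we have $|\Pi_{i,c}(x) - \Pi_{i-1,c}(x)| = |P_{i,c}(\Pi_{i-1,c}(x)) - \Pi_{i-1,c}(x)| \le C(n,N,m)\delta r_{i-1}$ for each $x \in \Phi_0^{-1}(c)$, and since $\sum r_{i-1} < \infty$, the limit $\Pi_c(x)$ is well-defined with $|\Pi_c(x) - x| \le C\delta$. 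Continuity of $\Phi$ combined with $\|\Phi - \Phi_i\|_\infty \to 0$ (Proposition \ref{bound of Phi}) gives $\Phi(\Pi_c(x)) = c$, placing the image in $\Phi^{-1}(c)$.

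For (ii), the corollary to Lemma \ref{projection between level sets of phi} yields $|d_j - d_{j-1}| \le C\delta \min(d_{j-1}, r_{j-1})$. I split steps into \emph{small scale} ($d_{j-1} \le r_{j-1}$, giving $d_j = (1 \pm C\delta) d_{j-1}$) and \emph{large scale} ($d_{j-1} > r_{j-1}$, giving $|d_j - d_{j-1}| \le C\delta r_{j-1}$). For $m \ge 1$ and $\delta$ small, the large scale regime is absorbing (since $d_{j-1} > r_{j-1}$ forces $d_j \ge (1-C\delta) r_{j-1} > r_j$), so there is a first large-scale index $J^* \in [1, \infty]$. Iterating the small-scale multiplicative bound gives $(1-C\delta)^{J^*-1} d_0 \le d_{J^*-1} \le (1+C\delta)^{J^*-1} d_0$, while summing the large-scale additive bounds over the geometric tail $\sum_{j \ge J^*} r_{j-1}$ yields $d_\infty = d_{J^*-1}(1 \pm C\delta)$, so $d_\infty = (1 \pm C\delta)^{J^*} d_0$ up to one extra $(1 \pm C\delta)$ factor.

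The crucial estimate is controlling $J^*$. Since step $J^*-1$ is small scale, $d_{J^*-2} \le r_{J^*-2} = 2^{-m(J^*-2)}$; combined with the small-scale lower bound $d_{J^*-2} \ge (1-C\delta)^{J^*-2} d_0$ this forces $(1-C\delta)^{J^*-2} d_0 \le 2^{-m(J^*-2)}$, yielding $J^* - 2 \le \log(1/d_0)/\log(2^m(1-C\delta))$. Consequently $(1+C\delta)^{J^*} \le C \cdot d_0^{-\beta}$ with $\beta \le C\delta/(m \log 2)$, and symmetrically $(1-C\delta)^{J^*} \ge (1-C\delta) d_0^{\gamma}$ with $\gamma \le C\delta/(m \log 2)$. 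Taking $\delta = \delta(n,N,m,\alpha)$ small enough that $\beta \le \alpha/(1+\alpha)$ and $\gamma \le \alpha/(1-\alpha)$ (the hypothesis $1-C\delta \ge r_i^{\alpha}$ encodes exactly such smallness) turns these into $d_\infty \le (1+C\delta)|x-y|^{1/(1+\alpha)}$ and $d_\infty \ge (1-C\delta)|x-y|^{1/(1-\alpha)}$.

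For (iii), Lemma \ref{projection between level sets of phi} says each $P_{i,c}$ is a diffeomorphism (onto the full level set), so $\Pi_{i,c} : \Phi_0^{-1}(c) \to \Phi_i^{-1}(c)$ is a bijection. Given $z \in \Phi^{-1}(c)$, pick $w_i \in \Phi_i^{-1}(c)$ with $w_i \to z$ (Lemma \ref{level set of Phi_i and Phi}.2) and set $x_i := \Pi_{i,c}^{-1}(w_i) \in \Phi_0^{-1}(c)$. The telescoping bound $|x_i - w_i| = |x_i - \Pi_{i,c}(x_i)| \le C\delta$ shows $\{x_i\}$ is bounded, so a subsequence $x_{i_k} \to x_\infty \in \Phi_0^{-1}(c)$. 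For each fixed $j$ and $i_k \ge j$, telescoping from scale $j$ to scale $i_k$ gives $|w_{i_k} - \Pi_{j,c}(x_{i_k})| \le C\delta r_j$; sending $k \to \infty$ (using continuity of the finite composition $\Pi_{j,c}$) then $j \to \infty$ yields $\Pi_c(x_\infty) = z$. The main obstacle is the scale-tracking in (ii): the naive iterated Lipschitz constant $(1+C\delta)^i$ of $\Pi_{i,c}$ diverges with $i$, and the H\"older exponent is recovered only because the bound $J^* = O(\log(1/d_0)/m)$ converts $(1+C\delta)^{J^*}$ into a mild negative power of $d_0$ that can be absorbed into the target exponent $1/(1+\alpha)$.
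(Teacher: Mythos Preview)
Your argument is correct and close in spirit to the paper's, but the organization of part (ii) and the surjectivity argument in (iii) differ in ways worth noting.

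For the biH\"older estimate, the paper does not isolate a transition index $J^*$. Instead it records, for \emph{every} $i$, both the multiplicative bound $(1\pm C\delta)^i d_0$ and the additive tail bound $|d_i-d_\infty|\le C\delta r_i$, then converts $(1\pm C\delta)^i$ into $r_i^{\mp\alpha}$ via the hypothesis $1-C\delta\ge r_1^\alpha$ to obtain the two-sided estimate
\[
  r_i^\alpha d_0 - C\delta r_i \le d_\infty \le r_i^{-\alpha}d_0 + C\delta r_i,
\]
and finally optimizes over $i$ by choosing $r_{i+1}^{1\pm\alpha}<d_0\le r_i^{1\pm\alpha}$. Your $J^*$ is morally this optimal $i$, so the two arguments are equivalent; the paper's version is a bit shorter since it avoids the absorbing-regime discussion and the log-computation for $J^*$.

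For surjectivity, the paper takes a different and slightly cleaner route: since each $\Pi_{i,c}$ is onto $\Phi_i^{-1}(c)$ and $d_H(\Phi_i^{-1}(c),\Phi^{-1}(c))\to 0$, one has $d_H(\Pi_c(\Phi_0^{-1}(c)),\Phi^{-1}(c))=0$; then the biH\"older estimate just proved forces $\Pi_c(\Phi_0^{-1}(c))$ to be closed, hence equal to $\Phi^{-1}(c)$. Your compactness-and-diagonal argument also works, but requires the extra continuity/boundedness bookkeeping you carry out.
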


\pf

For any $x\in \Phi_{i-1}^{-1}(c)$, $d(x, P_{i,c}(x))\le d(x,\Phi_{i}^{-1}(c))\le C(n,N,m)\delta r_i$. Thus the sequence $\Pi_{i,c}$ converges uniformly and $\Pi_c$ is well-defined. 

Next we prove $\Pi_c$ is bi-H\"older. Take $x,y\in \Rb^n$ with $|x-y|\le 1$. Let $d_0 = |x-y|$, $d_i=|\Pi_{i,c}(x)-\Pi_{i,c}(y)|$. Then
\begin{equation}\label{d_i to d_i+1}
    |d_{i} -d_{i-1}|\le C(n,N,m)\delta\cdot\min\{d_{i-1},r_{i-1}\}.
\end{equation}
For each $i$, we have
\begin{equation}
    (1-C(n,N,m)\delta)^i\le d_i/d_0\le (1+C(n,N,m)\delta)^i,
\end{equation}
\begin{equation}
    |d_i-d_\infty|\le C(n,N,m)\delta r_i.
\end{equation}
Recall that $1-C(n,N,m)\delta\ge r_1^{\alpha}$, $1+C(n,N,m)\delta\le r_1^{-\alpha}$. We have
\begin{equation}\label{bounds of d_infty}
    r_i^\alpha d_0-C(n,N,m)\delta r_i\le d_\infty\le r_i^{-\alpha}d_0+C(n,N,m)\delta r_i.
\end{equation}

It suffices to find the best $i$ for the above estimates. Note that the estimates above have the same expression as (\ref{potential upperbound of distance between level sets}), excepts that it holds for all $i$.

For the upper bound, pick $i$ such that
\begin{equation}
    r_{i+1}^{1+\alpha}<d_0\le r_i^{1+\alpha}.
\end{equation}
By the same argument, we have
\begin{equation}
d_\infty\le (1+C(n,N,m)\delta)d_0^{\frac{1}{1+\alpha}}.
\end{equation}

Similarly, for the lower bound, pick $i$ with
\begin{equation}
    r_{i+1}^{1-\alpha}<d_0\le r_i^{1-\alpha}.
\end{equation}
And we get
\begin{equation}
    d_\infty \ge (1-C(n,N,m)\delta)d_0^{\frac{1}{1-\alpha}}.
\end{equation}

We have proved the estimates, and it remains to check that $\Pi_c(\Phi_0^{-1}(c))=\Phi^{-1}(c)$. Note that $||\Pi_c-\Pi_{i,c}||\le C(n,N,m)\delta r_i$, $\Pi_{i,c}(\Phi^{-1}_0(c))=\Phi_{i}^{-1}(c)$, we have 
\begin{equation}
    d_H(\Pi_c(\Phi^{-1}_0(c)),\Phi^{-1}(c))\le ||\Pi_c-\Pi_{i,c}||+d(\Pi_{i,c}(\Phi_0^{-1}(c)), \Phi^{-1}(c))\le C(n,N,m)\delta r_i.
\end{equation}
Let $i\to \infty$, we get $d_H(\Pi_c(\Phi^{-1}_0(c)), \Phi^{-1}(c))=0$. 

Since $\Pi_c$ is biH\"older, $\Pi_c(\Phi_0^{-1}(c))$ is closed in $\Rb^n$. Since $\Phi^{-1}(c)$ and $\Pi_c(\Phi_0^{-1}(c))$ are both closed, we have $\Pi_c(\Phi_0^{-1}(c))=\Phi^{-1}(c)$.
\epf

Proposition \ref{level set is biholder} follows from Lemma \ref{level set of Phi is biHolder} immediately.

\subsection{Points of $S$ on Level Sets of $\Phi$}

In this subsection, we will prove the estimates for the intersection of $S$ and level sets of $\Phi$. These results will finish the proof of our main theorem.

\begin{prop}\label{distance between level sets of Phi intersecting S}

Assume the same condition as in Corollary \ref{bound of L(Phi_i)}. Then for any $c,d\in B_{1.98}(0^k)$, with $|c-d|\le 1$,
\begin{equation}
\begin{split}
    dist(\Phi^{-1}(c)\cap S, \Phi^{-1}(d)\cap S)\ge (1-C(n,N,m)\delta))|c-d|^{\frac{1}{1-\alpha}}.\\
    d_H(\Phi^{-1}(c)\cap S, \Phi^{-1}(d)\cap S)\le (1+C(n,N,m)\delta)|c-d|^{1-\alpha}.
\end{split}
\end{equation}
    
\end{prop}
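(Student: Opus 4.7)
The lower bound on $dist$ is immediate from Corollary \ref{Phi is Holder}: for $x \in \Phi^{-1}(c) \cap S$ and $y \in \Phi^{-1}(d) \cap S$, the H\"older estimate $|c-d| = |\Phi(x) - \Phi(y)| \le (1+C\delta)|x-y|^{1-\alpha}$ yields $|x-y| \ge (1-C\delta)|c-d|^{1/(1-\alpha)}$, and taking the infimum gives the bound on $dist$.

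For the upper bound on $d_H$, by symmetry it suffices to produce, for each $x \in \Phi^{-1}(c) \cap S \cap B_{1.98}(0^n)$, a point $y \in \Phi^{-1}(d) \cap S$ with $|x-y| \le (1+C\delta)|c-d|^{1-\alpha}$. My plan is to combine the local splitting structure of $S$ at a suitably chosen scale with a fixed-point correction for the $\Phi$-value. First I would fix a scale $r$ with $r^{1+\alpha} \sim |c-d|/\delta$, capped by a small constant when $|c-d|$ is not small. At this scale, $S \cap B_r(x)$ is $\delta r$-close to $A_{x,r}$, and the sheet $\sigma_x := x^* + \hat{l}_{x,r}$ of $A_{x,r}$ closest to $x$ satisfies $|x - x^*| \le \delta r$. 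By Lemma \ref{C^1 and C^2 modulo distortion}, $\ker\nabla\Phi(x^*)$ lies within $C\delta$ of $\hat{l}_{x,r}^\perp$, so $\Phi|_{\sigma_x}$ is regular; applying Lemma \ref{local image of regular map} I obtain, for each $d'$ near $\Phi(x^*)$, a unique $w(d') \in \sigma_x \cap B_r(x)$ with $\Phi(w(d')) = d'$, varying smoothly in $d'$ and satisfying $|w(d') - x^*| \le (1+C\delta)r^{-\alpha}|d' - \Phi(x^*)|$. Then I would define $s(d') \in S$ to be the closest point to $w(d')$; by Hausdorff closeness, $|s(d') - w(d')| \le \delta r$. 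Choosing $r$ at a scale where $\sigma_x$ is well separated from the other sheets of $A_{x,r}$ --- which by Proposition \ref{only finite scales have large width} is achievable near the target scale --- makes $s$ single-valued and continuous.

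Next I would set $F(d') := \Phi(s(d'))$; by Corollary \ref{Phi is Holder}, $|F(d') - d'| \le (1+C\delta)(\delta r)^{1-\alpha}$, so $F$ is continuous and near the identity. Brouwer's fixed-point theorem applied to $d' \mapsto d' + d - F(d')$ on a small closed ball around $d$ produces $d^*$ with $F(d^*) = d$, and hence $y := s(d^*) \in S$ satisfies $\Phi(y) = d$. Using $|d - d^*| \le C(\delta r)^{1-\alpha}$ together with the triangle estimate $|x-y| \le 2\delta r + (1+C\delta)r^{-\alpha}|c - d^*|$ and the choice $r \sim (|c-d|/\delta)^{1/(1+\alpha)}$, both summands are controlled by $C\delta^{\alpha/(1+\alpha)}|c-d|^{1/(1+\alpha)}$, which in turn is at most $(1+C\delta)|c-d|^{1-\alpha}$ for $|c-d| \le 1$ and $\delta$ small (since $1/(1+\alpha) > 1-\alpha$). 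The hardest step will be coordinating the scale choice with the sheet separation: one must pick $r$ to avoid the finitely many bad scales where sheets of $A_{x,r}$ merge (so that $s$ is unambiguous), while simultaneously ensuring that the Brouwer iterate stays within the neighborhood where $w$, and therefore $s$, is defined and all the quantitative estimates on $\Phi|_{\sigma_x}$ remain valid.
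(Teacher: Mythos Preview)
Your lower bound is correct and matches the paper's: both reduce to the H\"older estimate on $\Phi$ (equivalently, Proposition~\ref{distance between level sets of Phi}).

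For the upper bound there is a genuine gap, and it is not the scale-coordination issue you flag. First, a minor point: $\Phi$ is only $C^{0,1-\alpha}$, so $\nabla\Phi$, $\ker\nabla\Phi$, and Lemma~\ref{local image of regular map} applied to $\Phi|_{\sigma_x}$ are ill-posed; those statements are about $\Phi_i$, and passing to the limit must be done separately. More seriously, the Brouwer step requires $F=\Phi\circ s$ to be continuous, hence the closest-point map $s:\sigma_x\to S$ to be continuous. But $S$ has no manifold structure: even at a scale where the sheets of $A_{x,r}$ are well separated (so that $s$ lands near $\sigma_x$), the portion of $S$ near $\sigma_x$ may itself split and merge at all smaller scales (cf.\ Examples~\ref{merging lines} and~\ref{uncountable sheets}), and the nearest point in $S$ can jump. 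The jumps are small ($\le C\delta r$), so $F$ has jumps bounded by $C(\delta r)^{1-\alpha}$, but Brouwer does not apply to discontinuous maps; smoothing $F$ yields only $\Phi(s(d^*))\approx d$, not equality, so $s(d^*)\notin\Phi^{-1}(d)$. Invoking Proposition~\ref{only finite scales have large width} to pick a scale with separated sheets does not help here, because that separation controls $A_{x,r}$, not the fine structure of $S$.

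The paper circumvents this by reversing the roles of $S$ and $\Phi^{-1}(d)$: it stays on the level set $\Phi^{-1}(d)$, which \emph{does} have controlled geometry (Lemma~\ref{level sets of Phi are locally almost planes}), and iterates toward $S$. Concretely, starting from any $x_j\in\Phi^{-1}(d)$ with $d(x_j,S)\lesssim r_j$, Lemma~\ref{finding a point on S cap level set} uses the splitting of $S$ together with the near-perpendicularity of $\Phi^{-1}(d)$ to $\hat l_{x_j,r_j}$ to produce $x_{j+1}\in\Phi^{-1}(d)$ with $d(x_{j+1},S)\lesssim r_{j+1}$; the Cauchy sequence $\{x_l\}$ converges to a point of $\Phi^{-1}(d)\cap S$ within $(1+C\delta)d(x,\Phi^{-1}(d))$ of the starting point $x$. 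No fixed-point theorem or projection onto $S$ is needed.
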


The lower bound is a direct consequence of Proposition \ref{distance between level sets of Phi}. To prove the upper bound, we need the following technical lemmas:

The following lemma shows that the level sets of $\Phi$ can be approximated by a plane that is perpendicular to the splitting direction in a ball.
\begin{lem}\label{level sets of Phi are locally almost planes}
Assume the same conditions as in Corollary \ref{bound of L(Phi_i)}. For $i\ge 1$, $r_i=2^{-mi}$, assume $x\in \Phi^{-1}(c)$ with $B_{r_i}(x)\subset B_{1.99}(0^n)$, $S\cap B_{0.39r_i}(x)\neq\emptyset$. Let $\hat{l}_{x,r_i}$ be the splitting direction of $S$ in $B_{r_i}(x)$. Then
\begin{equation}
    d_H|_{B_{0.9r_i}(x)}(\Phi^{-1}(c),l_{x,r_i}^\perp)\le C(n,N,m)\delta r_i.
\end{equation}
\end{lem}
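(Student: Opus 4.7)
The strategy is to first prove the analogous statement for $\Phi_i^{-1}(c)$, which is a genuine smooth graphical submanifold by Lemma \ref{level sets of phi_i is graphical}, and then transfer it to $\Phi^{-1}(c)$ using the Hausdorff convergence $d_H(\Phi_i^{-1}(c),\Phi^{-1}(c))\le C(n,N,m)\delta r_i$ from Lemma \ref{level set of Phi_i and Phi}(2).

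First I would locate a good reference point. Since $S\cap B_{0.39r_i}(x)\neq\emptyset$, property 1 of Construction \ref{covering for all scales} gives some covering center $x_{i,a}$ with $x\in B_{0.4r_i}(x_{i,a})$. Using $x\in\Phi^{-1}(c)$ together with $d_H(\Phi_i^{-1}(c),\Phi^{-1}(c))\le C\delta r_i$, pick $x'\in\Phi_i^{-1}(c)$ with $|x'-x|\le C\delta r_i$; for $\delta$ small this still lies in a $0.4r_i$ covering ball (adjusting the innocuous constant $0.4$ in the covering construction if necessary). Now Lemma \ref{C^1 and C^2 modulo distortion}(2) applies at $x'$ to give $d((\ker\nabla\Phi_i(x'))^\perp,\hat l_{x',r_i})\le C(n,N,m)\delta$, and Corollary \ref{comparison of splitting directions} upgrades this to $d((\ker\nabla\Phi_i(x'))^\perp,\hat l_{x,r_i})\le C(n,N,m)\delta$ since $x'\in B_{r_i}(x)$.

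Next, Lemma \ref{C^1 and C^2 modulo distortion}(1) says $\Phi_i$ is $C\delta r_i^{-1}$-regular, so Proposition \ref{level sets of delta_x regular maps} realizes $\Phi_i^{-1}(c)$ as a $(C\delta,r_i)$-graphical submanifold whose graphing plane at $x'$ is $l_{x'}:=x'+\ker\nabla\Phi_i(x')$. By definition of graphical submanifold,
\begin{equation}
d_H\big|_{B_{r_i}(x')}(\Phi_i^{-1}(c),l_{x'})\le C(n,N,m)\delta r_i.
\end{equation}
The affine plane $l_{x'}$ passes through $x'$ (which is within $C\delta r_i$ of $x$) and has direction $C\delta$-close to $\hat l_{x,r_i}^\perp$ (the direction of $l_{x,r_i}^\perp$, which passes through $x$). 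Unwinding Definition \ref{metric on Grassmannian} on the radius-$r_i$ ball then yields $d_H|_{B_{0.95r_i}(x)}(l_{x'},l_{x,r_i}^\perp)\le C(n,N,m)\delta r_i$.

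Finally I would combine the three estimates. Using Lemma \ref{local Hausdorff distance properties}(2) twice (which loses only a harmless $C\delta r_i$ in the radius, absorbed by shrinking $0.95r_i$ to $0.9r_i$) I obtain
\begin{equation}
d_H\big|_{B_{0.9r_i}(x)}(\Phi_i^{-1}(c),l_{x,r_i}^\perp)\le C(n,N,m)\delta r_i,
\end{equation}
and then transferring from $\Phi_i^{-1}(c)$ to $\Phi^{-1}(c)$ via Lemma \ref{level set of Phi_i and Phi}(2) (and the monotonicity $d_H|_{B_r(x)}\le d_H$) closes the argument. The main subtlety — and what I expect to be the only real annoyance — is the bookkeeping around the failure of subadditivity of the local Hausdorff distance and the constant $0.4$ versus $0.4+C\delta$ in the covering; both are routine but need to be handled cleanly by shrinking radii by controllable amounts and using Lemma \ref{local Hausdorff distance properties} rather than a genuine triangle inequality.
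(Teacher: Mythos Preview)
Your proposal is correct and follows essentially the same approach as the paper: approximate $\Phi^{-1}(c)$ by $\Phi_i^{-1}(c)$ via Lemma \ref{level set of Phi_i and Phi}, use that $\Phi_i^{-1}(c)$ is a $(C\delta,r_i)$-graphical submanifold with graphing plane $x'+\ker\nabla\Phi_i(x')$, and compare that plane to $l_{x,r_i}^\perp$ using Lemma \ref{C^1 and C^2 modulo distortion}(2). The one minor simplification in the paper's version: it applies Lemma \ref{C^1 and C^2 modulo distortion}(2) directly at $x$ (which already lies in $\bigcup_a B_{0.4r_i}(x_{i,a})$ because $S\cap B_{0.39r_i}(x)\neq\emptyset$) and then compares $\ker\nabla\Phi_i(x)$ with $\ker\nabla\Phi_i(x')$ via regularity, thereby sidestepping the ``$0.4$ versus $0.4+C\delta$'' bookkeeping you flag at the end.
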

\pf

From the conditions we have, $x\in B_{0.39 r_i}(S\cap B_{1.99}(0^n))$. By Construction \ref{covering for all scales}, we see $x\in \bigcup_aB_{0.4 r_i}(x_{i,a})$. By Lemma \ref{C^1 and C^2 modulo distortion},
\begin{equation}
    d((\ker\nabla \Phi_i(x))^\perp,\hat{l}_{x,r_i})\le C(n,N,m)\delta.
\end{equation}
By Lemma \ref{level set of Phi_i and Phi}, $d_H(\Phi_i^{-1}(c),\Phi^{-1}(c))\le C(n,N,m)\delta r_i$ and there exists $x'\in \Phi_i^{-1}(c)$ with $|x'-x|\le C(n,N,m)\delta r_i$. By Lemma \ref{level sets of phi_i is graphical}, $\Phi_i^{-1}(c)$ is a $(C(n,N,m)\delta, r_i)$-graphical submanifold, with graphing plane $x'+\ker \nabla \Phi_i(x')$ in $B_{r_i}(x')$. Note that $d(\ker \nabla \Phi_i(x'),\ker \nabla \Phi_i(x))\le C(n,N,m)\delta$. We have
\begin{align}
\begin{split}
&d_H|_{B_{0.9r_i}(x)}(\Phi^{-1}(c),l_{x,r_i}^\perp)\\
\le\,&d_H(\Phi^{-1}(c),\Phi^{-1}_i(c)) + d_{H}|_{B_{r_i}(x')}(\Phi_i^{-1}(c), \ker \nabla\Phi_i(x') + x')\\ 
&+ d_{H}|_{B_{r_i}(x')}(\ker \nabla\Phi_i(x') + x', \ker \nabla\Phi_i(x) + x) + d_H|_{B_{r_i}(x)}(\ker \nabla\Phi_i(x) + x,l_{x,r_i}^\perp)\\
\le\,& C(n,N,m)\delta r_i.
\end{split}
\end{align}
\epf

The following lemma allows us to pick points increasingly near to $S$ inductively on a given level set of $\Phi$.
\begin{lem}\label{finding a point on S cap level set}

Assume the same conditions as in Corollary \ref{bound of L(Phi_i)}. For $i\ge 1$, $r_i=2^{-mi}$, assume $x_i\in \Phi^{-1}(c)$ with $B_{r_i}(x_i)\subset B_{1.99}(0^n)$, $S\cap B_{0.39r_i}(x_i)\neq\emptyset$. Then there exists $x_{i+1} \in B_{0.5 r_i}(x_i)\cap \Phi^{-1}(c)$ such that $S\cap B_{0.39 r_{i+1}}(x_{i+1})\neq \emptyset$.
\end{lem}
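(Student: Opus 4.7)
The plan is to find a witness point $y \in S$ near $x_i$ and slide it along the splitting direction $\hat{l}_{x_i,r_i}$ until it lands on $\Phi^{-1}(c)$, while staying close to $S$. The geometric picture is that $\Phi^{-1}(c)$ is almost perpendicular to $\hat{l}_{x_i,r_i}$ at scale $r_i$ by Lemma \ref{level sets of Phi are locally almost planes}, while $S$ itself sits within $\delta r_i$ of a union of parallel translates of $\hat{l}_{x_i,r_i}$. Motion along $\hat{l}_{x_i,r_i}$ therefore preserves closeness to $S$ and is transverse to $\Phi^{-1}(c)$, which is exactly what is needed to convert a $y \in S$ near $x_i$ into an $x_{i+1}$ on $\Phi^{-1}(c)$ with $S$ still nearby.

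Concretely, I would first pick any $y \in S \cap B_{0.39 r_i}(x_i)$ and let $y^\perp$ denote its orthogonal projection onto the affine plane $l_{x_i,r_i}^\perp$. Since $|y^\perp - x_i| \leq 0.39 r_i$ lies well inside $B_{0.9 r_i}(x_i)$, Lemma \ref{level sets of Phi are locally almost planes} produces $x_{i+1} \in \Phi^{-1}(c)$ with $|x_{i+1} - y^\perp| \leq C(n,N,m)\delta r_i$. The triangle inequality then gives $x_{i+1} \in B_{0.5 r_i}(x_i)$ once $\delta \leq \delta(n,N,m)$.

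To finish, I need to produce a point of $S$ inside $B_{0.39 r_{i+1}}(x_{i+1})$. By the $(k,\delta)$-splitting of $S$ in $B_{r_i}(x_i)$, the point $y$ lies within $\delta r_i$ of some translate $\hat{l}_{x_i,r_i} + a$ with $a \in A_{x_i,r_i}^\perp$, which forces $|y^\perp - a| \leq \delta r_i$. I then translate $a$ along $\hat{l}_{x_i,r_i}$ by the parallel component of $x_{i+1} - x_i$, producing a point $q \in A_{x_i,r_i}$. A short computation using orthogonal decomposition yields $|q - x_{i+1}| \leq C(n,N,m)\delta r_i$ and $q \in B_{r_i}(x_i)$; applying $d_H|_{B_{r_i}(x_i)}(S, A_{x_i,r_i}) \leq \delta r_i$ then produces $y' \in S$ with $|y' - x_{i+1}| \leq C(n,N,m)\delta r_i$.

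The only quantitative subtlety is the final inequality $C(n,N,m)\delta r_i \leq 0.39 r_{i+1} = 0.39 \cdot 2^{-m} r_i$, which is absorbed by requiring $\delta \leq \delta(n,N,m)$ small enough (recall that $m$ was fixed earlier). No serious obstacle arises beyond what is already handled by Lemma \ref{level sets of Phi are locally almost planes}: the argument is essentially a direct application of the splitting decomposition together with the transversality of $\Phi^{-1}(c)$ to $\hat{l}_{x_i,r_i}$, tied together by the triangle inequality.
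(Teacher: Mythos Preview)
Your proposal is correct and follows essentially the same route as the paper: project a witness $y\in S\cap B_{0.39r_i}(x_i)$ onto $l_{x_i,r_i}^\perp$, invoke Lemma~\ref{level sets of Phi are locally almost planes} to land on $\Phi^{-1}(c)$ nearby, and use the $\hat{l}_{x_i,r_i}$-invariance of the splitting set to recover a point of $S$ within $C(n,N,m)\delta r_i$. The paper's write-up is terser (it simply asserts $d(y',S)\le C\delta r_i$ and $d(y',\Phi^{-1}(c))\le C\delta r_i$ and then picks $x_{i+1}$), whereas you spell out the translation to $q\in A_{x_i,r_i}$ explicitly; this extra step is harmless but unnecessary, since $a$ itself already lies within $C\delta r_i$ of $x_{i+1}$.
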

\pf
By Lemma \ref{level sets of Phi are locally almost planes}, 
\begin{equation}   
    d_H|_{B_{0.9r_i}(x_i)}(\Phi^{-1}(c),l_{x_i,r_i}^\perp)\le C(n,N,m)\delta r_i.
\end{equation}
Since $S$ is a $(k,\delta,N)$-splitting Reifenberg set, there exists $A$ with $A=\hat{l}_{x_i,r_i}+A$, such that
\begin{equation}
    d_H|_{B_{r_i}(x_i)}(S,A)\le C(n,N,m)\delta r_i
\end{equation}
Take $y\in S\cap B_{0.39 r_i}(x_i)$, and let $y'$ be the projection of $y$ on $l_{x_i,r_i}^\perp$. Then $d(y',S)\le C(n,N,m)\delta r_i$, $d(y', \Phi^{-1}(c))\le C(n,N,m)\delta r_i$. Thus we may pick $x_{i+1}$ near $y'$ such that $x_{i+1} \in B_{0.5 r_i}(x_i)\cap \Phi^{-1}(c)$ and $S\cap B_{0.39 r_{i+1}}(x_{i+1})\neq \emptyset$.
\epf

Now we can prove Proposition \ref{distance between level sets of Phi intersecting S}.

\pf[Proof of Proposition \ref{distance between level sets of Phi intersecting S}]
It only remains to prove the upper bound. Let $c,d\in B_{1.98}(0^n)$, $|c-d|\le 1$, $x\in \Phi^{-1}(c)\cap S$, $d_0=d(x,\Phi^{-1}(d))$. Then $B_{r_1}(x)\in B_{1.99}(0^n)$ If $d_0\ge 0.3 r_1$, the conclusion holds since 
\begin{equation}
    d_H|_{B_{2}(0^n)}(S,\Rb^k\times \{0^{n-k}\})\le \delta. 
\end{equation}
If not, then there exists an $i\ge 1$ such that $0.3r_i \ge d_0>0.3 r_{i+1}$. Note that $\Phi^{-1}_i(d)\cap B_{0.39r_i}(x)\neq \emptyset$. Let $\hat{l}_{x,r_i}$ be the splitting direction of $S$ in $B_{x,r_i}$. By the same technique as in the proof of Lemma \ref{level sets of Phi are locally almost planes}, there exists some $a\in \hat{l}_{x,r_i}, |a|\le 0.4 r_i$, $A\subset \Rb^n$ with $A=A+\hat{l}_{x,r_i}$, such that
\begin{equation}
    d_H|_{B_{0.9r_i}(x)}(\Phi^{-1}(c),l_{x,r_i}^\perp)\le C(n,N,m)\delta r_i.
\end{equation}
\begin{equation}
    d_H|_{B_{0.9r_i}(x)}(\Phi^{-1}(d),l_{x,r_i}^\perp+a)\le C(n,N,m)\delta r_i.
\end{equation}
\begin{equation}
    d_H|_{B_{r_i}(x)}(S,A)\le C(n,N,m)\delta r_i.
\end{equation}
Thus $|a-d_0|\le C(n,N,m)\delta r_i$. $d(x+a,S)\le C(n,N,m)\delta r_i$, $d(x+a,\Phi^{-1}(d))\le C(n,N,m)\delta r_i$. Thus we may pick $x_j\in \Phi^{-1}(d)$ with 
\begin{equation}
    d(x_j,x+a)\le C(n,N,m)\delta r_i,
\end{equation}
and $r_j \le C(n,N,m)\delta r_i$ such that $S\cap B_{0.39r_j}(x_j)\neq \emptyset$.
Apply Lemma \ref{finding a point on S cap level set} inductively, and we get a sequence of points $\{x_l\}_{l=j}^\infty$ with $x_l\in \Phi^{-1}(d)$, $|x_{l}-x_{l-1}|\le 0.5r_{l-1}$ and $d(x_l,S)\le 0.39r_l$. Let $x_\infty:=\lim_lx_l$. Since $S$ is closed $x_\infty \in \Phi^{-1}(d)\cap S$, and $|x_\infty-x_j|\le r_j\le C(n,N,m)\delta r_j$. Thus 
\begin{equation}
    d(x_\infty,x)\le (1+C(n,N,m)\delta)d_0\le (1+C(n,N,m)\delta)|c-d|^{1-\alpha}.
\end{equation}
The proof is now finished.
\epf

Finally we can prove our main theorem.
\begin{thm}

Given $\alpha>0$, $n$, $N$, assume $\delta<\delta(n,N,\alpha)$ is sufficiently small. Let $S\subset B_2(0^n)$ be a $(k,\delta,N)$-Reifenberg set. Then there exists a $C^{1-\alpha}$-biH\"older map $\iota: B_1(0^k)\to (2^{S},d_H)$: for each $c,d\in B_{1}(0^k)$ with $|c-d|\le 1$, we have $\iota(c),\iota(d)\subset S$, and
\begin{equation}
    (1-C(n,N)\delta)|c-d|^{\frac{1}{1-\alpha}}\le d_H(\iota(c),\iota(d))\le (1+C(n,N)\delta)|c-d|^{1-\alpha},
\end{equation}
such that 
\begin{equation}
S\cap B_1(0^n)\subset \bigsqcup_{c\in B_{1}(0^k)}\iota(c)\cap B_1(0^n).
\end{equation}

\end{thm}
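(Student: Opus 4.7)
The plan is to take $\iota(c) := \Phi^{-1}(c) \cap S$, where $\Phi$ is the global map from Definition \ref{definition of Phi}; this is the map already anticipated in the introduction and built into the construction of Section 4. Since $\Phi$ is continuous on $\Rb^n$ (being $(1-\alpha)$-H\"older by Corollary \ref{Phi is Holder}) and $S$ is closed, each $\iota(c)$ is a closed subset of $S$, hence an element of $2^S$, and the fibers $\iota(c), \iota(d)$ are automatically disjoint whenever $c \ne d$. What remains is to check (i) the biH\"older estimate, (ii) that $\iota(c) \neq \emptyset$ for every $c \in B_1(0^k)$ so that $d_H(\iota(c),\iota(d))$ is finite, and (iii) the covering property.

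Claims (i) and (iii) should follow readily from the earlier work. Since $B_1(0^k) \subset B_{1.98}(0^k)$, Proposition \ref{distance between level sets of Phi intersecting S} applies directly: the upper bound on $d_H$ is verbatim its second inequality, and the lower bound for $dist$ converts to the desired lower bound on $d_H$ via the trivial inequality $d_H \ge dist$. For (iii), given $x \in S \cap B_1(0^n)$ take $c := \Phi(x)$, so that $x \in \Phi^{-1}(c)\cap S = \iota(c)$; Proposition \ref{bound of Phi} gives $|c - x| \le C(n,N,m)\delta$, and by choosing $\delta$ small enough one absorbs this loss to conclude $c \in B_1(0^k)$, exactly as in the classical Reifenberg setting.

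The crucial remaining point is (ii), the nonemptiness of each $\iota(c)$. The strategy is to iterate Lemma \ref{finding a point on S cap level set}. First, one produces an initial base point $x_0 \in \Phi^{-1}(c) \cap B_{1.99}(0^n)$ with $S \cap B_{0.39 r_1}(x_0) \neq \emptyset$ by combining the surjectivity of each $\Phi_i$ from Proposition \ref{distance between level sets of regular map} with the Hausdorff convergence $\Phi_i^{-1}(c) \to \Phi^{-1}(c)$ of Lemma \ref{level set of Phi_i and Phi}, yielding some $x_0 \in \Phi^{-1}(c)$ close to $(c, 0^{n-k})$, which in turn lies near $S$ because $d_H|_{B_2(0^n)}(S, \Rb^k \times \{0^{n-k}\}) \le 2\delta$. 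Applying Lemma \ref{finding a point on S cap level set} inductively produces a sequence $\{x_i\} \subset \Phi^{-1}(c)$ with $|x_{i+1} - x_i| \le 0.5 r_i$ and $d(x_i, S) \le 0.39 r_i$; this sequence is Cauchy, and its limit lies in $\Phi^{-1}(c) \cap S$ by continuity of $\Phi$ and closedness of $S$.

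The main obstacle is essentially a bookkeeping one: propagating the $C(n,N,m)\delta$-errors through the nested comparisons between $\Phi$, the approximants $\Phi_i$, the coordinate projection, and $S$, while ensuring that the Cauchy iteration for (ii) stays inside $B_{1.99}(0^n)$ so that the splitting Reifenberg data at scale $r_i$ are available at every step. With $\delta$ chosen small relative to $\alpha$ and $m$ large relative to $n$, all such errors are absorbed into the stated constants $C(n,N)$, and the theorem follows.
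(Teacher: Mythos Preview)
Your approach is essentially the paper's: define $\iota(c)=\Phi^{-1}(c)\cap S$ and read off the biH\"older estimate from Proposition \ref{distance between level sets of Phi intersecting S}, with the covering coming from Proposition \ref{bound of Phi}. Your explicit treatment of nonemptiness via iteration of Lemma \ref{finding a point on S cap level set} is a genuine improvement over the paper's write-up, which glosses over this point; your Cauchy-sequence argument is exactly the mechanism used inside the proof of Proposition \ref{distance between level sets of Phi intersecting S}, and your check that the iteration stays in $B_{1.99}(0^n)$ goes through since $|x_1|\le 1+C\delta$ and $\sum_{j\ge 1}0.5r_j\le r_1=2^{-m}$.

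One small imprecision in (iii): for $x\in S\cap B_1(0^n)$ with $|x|$ close to $1$, the estimate $|\Phi(x)-\Phi_0(x)|\le C(n,N,m)\delta$ only gives $|\Phi(x)|\le 1+C\delta$, which cannot be forced below $1$ by shrinking $\delta$ alone. The paper fixes this by precomposing with a dilation of the domain, setting $\iota(c)=\Phi^{-1}((1+C\delta)c)\cap S$ (equivalently, rescaling $B_{1+C\delta}(0^k)$ back to $B_1(0^k)$); this preserves the biH\"older estimate after absorbing the factor $(1+C\delta)^{\pm 1}$ into the constants. Your phrase ``exactly as in the classical Reifenberg setting'' suggests you have this in mind, but as written the conclusion $c\in B_1(0^k)$ is not literally justified.
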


\pf

Take $m$, $\delta$, $\Phi$ as in Corollary \ref{bound of L(Phi_i)}. Note that $m$ is decided by $n$, the constants $C(n,N,m)$ can actually be replaced by $C(n,N)$. By Lemma \ref{bound of cardinality of level set intersect S}, we may define $i:B_{1.98}(0^k)\to 2^{S}_{C(n,N)}, c\mapsto \Phi^{-1}(c)\cap S$. By Proposition \ref{distance between level sets of Phi intersecting S}, since $\frac{1}{1+\alpha}<1-\alpha$, $i(c)$ is disjoint for each $c$ and $i$ is a $C^{1-\alpha}$ biH\"older homeomorphism onto its image.
It is clear that $||\Phi||<(1+C(n,N))\delta$ in $B_1(0^n)$. Taking $\iota(c)=i(\frac{c}{1+C(n,N)\delta})$, then $\iota$ is also biH\"older with
\begin{equation}
    S\cap B_1(0^n)\subset \bigsqcup_{c\in B_{1.01}(0^k)}i(c)\cap B_1(0^n).
\end{equation}
The proof of the main theorem is now finished.
\epf

It remains to prove a control of the number of sheets of $S$ in our construction, i.e., the cardinality $|\Phi^{-1}(c)\cap S|$. In the general case, $\Phi^{-1}(c)\cap S$ can have a positive Hausdorff dimension, as shown in Example \ref{uncountable sheets}. However, if we assume additionally that there are only finite bad scales at each point, then we may bound $|\Phi^{-1}(c)\cap S|$ as well.

First we introduce a technical lemma find distinct scales from a collection of points.

\begin{lem}\label{P technical}

\cite{BGP92}For $l\in \Nb$ there exists $N(l,n)$ such that for any $N(l,n)$ distinct points in $\Rb^n$ one can choose $x_0,\dots, x_l$ among them so that $|x_0x_i|>2|x_0x_{i-1}|$, $2\le i\le l$.

\end{lem}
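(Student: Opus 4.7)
Since the lemma is cited from \cite{BGP92}, the plan is to give a short proof by induction on $l$, using only the doubling property of $\Rb^n$. The base cases $l=0$ and $l=1$ are vacuous, and one may take $N(0,n)=1$ and $N(1,n)=2$: the condition $|x_0 x_i|>2|x_0 x_{i-1}|$ is imposed only for $2\le i\le l$ and so carries no content when $l\le 1$.

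For the inductive step, let $K=K(n)$ be the minimal number of closed balls of radius $1/10$ needed to cover the closed unit ball in $\Rb^n$, and set $N(l,n):=K\cdot N(l-1,n)$. Given $P\subset\Rb^n$ with $|P|=N(l,n)$, pick $u,v\in P$ realizing $D:=\diam{P}=|uv|$, so that $P\subset\overline{B_D(u)}$. Cover $\overline{B_D(u)}$ by $K$ closed balls of radius $D/10$; by pigeonhole one of them, call it $B$, contains at least $N(l-1,n)$ points of $P$. Applying the inductive hypothesis to $P\cap B$ produces $x_0,x_1,\ldots,x_{l-1}\in P\cap B$ with $|x_0 x_i|>2|x_0 x_{i-1}|$ for $2\le i\le l-1$, and since $\diam{P\cap B}\le D/5$ we have in particular $|x_0 x_{l-1}|\le D/5$.

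To complete the sequence I would let $x_l\in P$ maximize $|x_0 x_l|$ over $P$. The triangle inequality gives $|x_0 u|+|x_0 v|\ge|uv|=D$, so $\max(|x_0 u|,|x_0 v|)\ge D/2$, and hence $|x_0 x_l|\ge D/2>2\cdot(D/5)\ge 2|x_0 x_{l-1}|$. Note that $x_l\notin B$, since otherwise $|x_0 x_l|\le\diam{B}=D/5<D/2$; in particular $x_l$ is distinct from $x_0,\ldots,x_{l-1}$. Appending $x_l$ to the sequence from the inductive step yields $x_0,x_1,\ldots,x_l$ with all the required inequalities.

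The only real subtlety, and where one has to be mildly careful, is the bookkeeping in the choice of the covering radius: it must be strictly less than $D/4$ so that the diameter bound on $P\cap B$ coming out of the recursion beats the a priori lower bound $|x_0 x_l|\ge D/2$ by a factor of $2$ with room to spare. The value $D/10$ is convenient, but any radius $cD$ with $c<1/4$ would work at the cost of replacing $K(n)$ by the corresponding covering number; neither the exact constant nor any geometric input beyond the doubling property of $\Rb^n$ plays a role.
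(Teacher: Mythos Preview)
Your proof is correct and follows essentially the same approach as the paper's: induction on $l$, covering by balls of radius $D/10$, pigeonhole to find a dense sub-ball where the inductive hypothesis applies, and then appending a point at distance $\ge D/2$ from $x_0$. Your choice of $x_l$ as the maximizer of $|x_0\cdot|$ over $P$ (with the triangle-inequality justification $|x_0u|+|x_0v|\ge D$) and your explicit check that $x_l\notin B$ are slightly cleaner than the paper's version, but the argument is the same.
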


\pf

We'll prove by induction. It is clear that $N(1,n)=2$ works. Assume $N(l,n)$ has been selected. Let $X$ be the set of all the distinct points. There exists $y,z\in X$ with $d:=d(y,z)=\diam{X}$.

We may pick $C(n)$ balls of radius $d/10$ to cover $X$ since $X\subset B_d(y)$. Let 
\begin{equation}
    N(l+1,n)=C(n)N(l,n)
\end{equation}
Then there must be one ball $B_{d/10}(a)$ such that $|X\cap B_{r/10}(a)|\ge N(n,l)$. Thus we may pick a sequence $\{x_0,\dots,x_l\}\subset X\cap B_{d/10}(a)$ such that $|x_0x_i|>2|x_0x_{i-1}|, \text{ for }2\le i\le l$.

Note that either $d(y,a)\ge r/2$ or $d(z,a)\ge r/2$ holds. Say $d(y,a)\ge r/2$ for example. Then we may take $x_{l+1} = y$. It is clear that $|x_{0}y|\ge 0.5d>2|x_0x_l|$. Thus the construction for $l+1$ is complete.
\epf

Now we can control the cardinality of $|\Phi^{-1}(c)\cap S|$.
\begin{prop}\label{bound of cardinality of level set intersect S}
Assume the same condition as in Corollary \ref{bound of L(Phi_i)}. Assume further that for each $x\in B_{1.99}(0^n)$, there are at most $N'$ scales $s_j=2^{-j}$ such that $A_{x,s_j}$ is not a single plane. Then for each $c\in B_{1.98}(0^k)$,
\begin{equation}
    |\Phi^{-1}(c)\cap S|\le C(n,N').
\end{equation}
\end{prop}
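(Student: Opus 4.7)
The plan is to argue by contradiction. Assuming $|\Phi^{-1}(c)\cap S|$ exceeds a constant $C(n,N')$, I will locate a single point $x_0$ of $S$ at which $N'+1$ distinct dyadic scales carry splitting sets that are not single planes, violating the hypothesis.

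I would first apply a pigeonhole on a bounded cover of $B_{1.99}(0^n)$ to restrict to a large subset of $\Phi^{-1}(c)\cap S$ contained in a ball of radius $r_0\ll 2^{-m}$, which ensures all scales appearing below stay comfortably inside $B_{1.99}(0^n)$. I would then invoke Lemma \ref{P technical} with parameter $l=m(N'+1)$: from any $N(m(N'+1),n)$ distinct points I extract $y_0,\dots,y_{m(N'+1)}$ with $|y_0 y_{i+1}|>2|y_0 y_i|$, and keeping every $m$-th index yields points $x_0,x_1,\dots,x_{N'+1}$ with $|x_0 x_{i+1}|>2^m|x_0 x_i|$.

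For each $i=1,\dots,N'+1$ I pick the scale $r_{j_i}=2^{-mj_i}$ to be the smallest of that form with $r_{j_i}\ge 2|x_0 x_i|$, so that $r_{j_i}/2^{m+1}<|x_0 x_i|\le r_{j_i}/2$ and hence $x_i\in B_{0.9 r_{j_i}}(x_0)$; the spacing of the $|x_0 x_i|$ forces the $j_i$ to be strictly distinct. The heart of the argument is the claim that at each such scale, $A_{x_0,r_{j_i}}$ cannot be a single $k$-plane. If it were some $\hat l+\{a\}$, then since $x_0,x_i\in S\cap B_{r_{j_i}}(x_0)$, the splitting Reifenberg condition would force $d(x_0,\hat l+\{a\}),d(x_i,\hat l+\{a\})\le\delta r_{j_i}$, bounding the $\hat l^\perp$-component of $x_i-x_0$ by $2\delta r_{j_i}$. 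At the same time, $x_0,x_i\in\Phi^{-1}(c)\cap B_{0.9r_{j_i}}(x_0)$, and Lemma \ref{level sets of Phi are locally almost planes} applied at $x_0$ with scale $r_{j_i}$ (the condition $S\cap B_{0.39r_{j_i}}(x_0)\neq\emptyset$ is automatic since $x_0\in S$) gives that $\Phi^{-1}(c)$ sits within $C(n,N,m)\delta r_{j_i}$ of the perpendicular plane $l_{x_0,r_{j_i}}^\perp$, bounding the $\hat l_{x_0,r_{j_i}}$-component of $x_i-x_0$ by $C\delta r_{j_i}$ as well. Adding, $|x_i-x_0|\le C(n,N,m)\delta r_{j_i}$, which contradicts $|x_i-x_0|>r_{j_i}/2^{m+1}$ once $\delta<\delta(n,N,m,N')$.

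Consequently each $r_{j_i}=2^{-mj_i}$ is a dyadic scale of the form $2^{-j}$ at which the splitting set at $x_0$ is not a single plane, producing $N'+1$ bad dyadic scales at $x_0$ and contradicting the hypothesis. Hence $|\Phi^{-1}(c)\cap S|\le C(n,N')$, with the constant absorbing the pigeonhole count together with $N(m(N'+1),n)$, both depending only on $n,N'$ since $m=m(n)$. The main technical nuisance I expect is the bookkeeping at the very start: the hypothesis speaks of all dyadic scales $2^{-j}$, whereas Lemma \ref{level sets of Phi are locally almost planes} is phrased for scales $r_j=2^{-mj}$; passing to the sparser $r_j$-scales is harmless because any bad $r_j$-scale is automatically a bad dyadic scale, but one must verify the pigeonhole reduction leaves enough points to carry out the extraction inside a ball whose radius is small enough that every $B_{r_{j_i}}(x_0)$ fits inside $B_{1.99}(0^n)$.
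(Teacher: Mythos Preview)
Your proposal is correct and follows essentially the same route as the paper: extract via Lemma \ref{P technical} a sequence of points at geometrically separated distances from a base point $x_0$, then use Lemma \ref{level sets of Phi are locally almost planes} to show that each separation forces the splitting set at $x_0$ at the corresponding scale to have more than one sheet, producing $N'+1$ bad scales at $x_0$. Your write-up is in fact more careful than the paper's about the parameter $l=m(N'+1)$, the initial localization, and the passage between dyadic scales $2^{-j}$ and the scales $r_j=2^{-mj}$.
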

\pf
For $x,y\in \Phi^{-1}(c)\cap S$, $x,y\in B_{1.99}(0^n)$. There is some $i\ge 1$ such that $0.6r_{i+1}\le|x-y|<0.6r_i$. By Lemma \ref{level sets of Phi are locally almost planes}, 
\begin{equation}
    d_H|_{B_{0.9r_i}(x)}(\Phi^{-1}(c),l_{x,r_i}^\perp)\le C(n,N,m)\delta r_i
\end{equation}
Thus $\diam{A_{x,r_i}^\perp}>0.5r_{i+1}=2^{-m-1}r_i$ and $A_{x,r_i}^\perp$ is not a single plane. By Lemma \ref{P technical}, if $|S\cap \Phi^{-1}(c)|>C(n,N',m)$, there exists $x_0,x_1,\dots, x_{N'+1}$ with $|x_0-x_{i+1}|< r_i|x_0-x_i|$ for each $i$. Thus there are at least $N'+1$ scales at $x_0$ such that the splitting set is not a single plane. Thus $|S\cap \Phi^{-1}(c)|\le C(n,N',m)$. Since $m$ can be decided by $n$, the conclusion is true.
\epf
By Proposition \ref{bound of cardinality of level set intersect S}, if we assume a uniform bound $N'$ of the number of bad scales, then the embedding $\iota$ we constructed in the main theorem satisfies $|\iota(c)|\le C(n,N')$ for all $c\in B_1(0^k)$. Thus we have proved the control of the number of sheets of $S$.

\section*{Acknowledgments}

I am grateful to my advisor Professor Aaron Naber at Institute for Advanced Study for his valuable suggestions, encouragement and guidance during my work on this paper. All potential omissions and errors remain my own.

\printbibliography

\end{document}